\begin{document}

\title{Chromatic Cyclotomic Extensions}
\maketitle

\begin{abstract}
    We construct Galois extensions of the $T(n)$-local sphere, lifting all finite abelian Galois extensions of the $K(n)$-local sphere. This is achieved by realizing them as higher semiadditive analogues of cyclotomic extensions. Combining this with a general form of Kummer theory, we lift certain elements from the $K(n)$-local Picard group to the $T(n)$-local Picard group. 
\end{abstract}

\begin{figure}[H]
    \centering{}
    \includegraphics[scale=0.55]{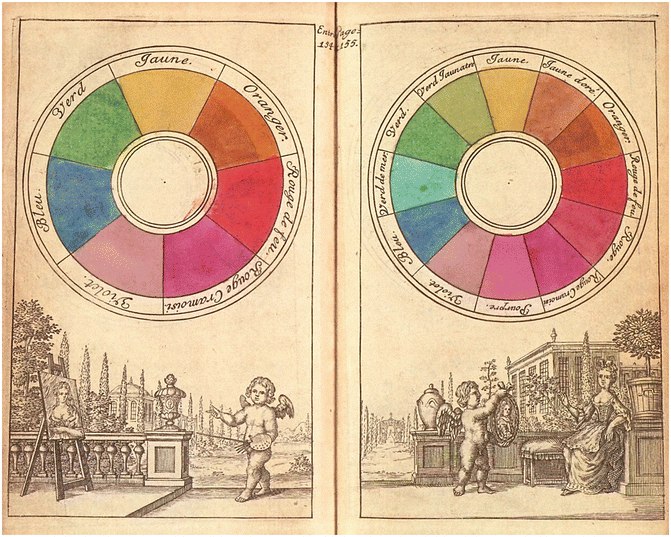}
    \caption*{Color circles from the enlarged 1708 edition of the Treatise on Miniature Painting, Claude Boutet.}
\end{figure}

\tableofcontents{}

\section{Introduction}

\subsection{Overview}
\subsubsection{Background \& main results}
Chromatic homotopy theory, as a general approach, proposes to study the $\infty$-category $\Sp_{(p)}$, of $p$-local spectra, via the ``chromatic height filtration''. In practice, there are two prominent candidates for the ``monochromatic layers'' for such a filtration.
The first, are the $K(n)$-localizations 
$L_{K(n)}\colon\Sp_{(p)} \to \Sp_{K(n)},$ 
where $K(n)$ is the Morava $K$-theory spectrum of height $n$.
The second, are the telescopic localizations $L_{T(n)}\colon\Sp_{(p)} \to \Sp_{T(n)},$ 
where $T(n)$ is obtained by inverting a $v_n$-self map of a finite spectrum of type $n$.
The two candidates are related by the inclusion $\Sp_{K(n)}\sseq \Sp_{T(n)}$, which is known to be an equivalence for $n=0,1$ by the works of Miller \cite{miller1981telescope} and Mahowald \cite{mahowald1981bo}. Whether this inclusion is an equivalence for all $n$ is an open question and is the subject of the celebrated ``telescope conjecture'' of Ravenel. 
On the one hand, the $\infty$-categories $\Sp_{T(n)}$ are fundamental from a structural standpoint, as they arise via the thick subcategory theorem \cite[Theorem 7]{nilp2}. They also admit a close connection to unstable homotopy theory \cite{heuts2021lie}, and figure in the ``redshift'' phenomena for algebraic K-theory \cite{ausoni2008chromatic,hahn2022redshift,land2020purity,clausen2020descent}. However, they are hard to access computationally. 
On the other hand, the $\infty$-categories $\Sp_{K(n)}$, which a priori might contain somewhat less information, still exert a large control over $\Sp_{(p)}$, due to the nilpotence theorem of Hopkins-Devinatz-Smith \cite[Corollary 5]{nilp2} and the chromatic convergence theorem of Hopkins-Ravenel (see \cite[Theorem 7.5.7]{ravenel1992nilpotence}). Moreover, they possess deep connections to the algebraic geometry of formal groups, and are consequently much more amenable to computations. 

One of the key instances of the relationship between the theory of formal groups and $\Sp_{K(n)}$, is the construction of the  Lubin-Tate $\mathbb{E}_\infty$-ring spectrum $E_n$\footnote{In this paper, we use the version of $E_n$ whose coefficients satisfy $\pi_0E_n \simeq W(\cl{\FF}_p)[[u_1,\dots,u_{n-1}]]$.} (see \cite{goerss2004moduli}, or alternatively \cite[Construction 5.1.1]{Lurie_Ell2}). Simply put, $E_n$ provides a faithful and relatively computable (highly structured) multiplicative (co)homology theory for $K(n)$-local spectra. Moreover, the cohomology operations of $E_n$ can be understood in terms of the Morava stabilizer group  
$\Morex_n = \widehat{\ZZ}\ltimes \aut(\cl{\Gamma}),$
where $\Gamma$ is a formal group law of height $n$ over $\FF_p$, and $\cl{\Gamma}$ is its base-change to $\cl{\FF}_p$.
From a more conceptual perspective, by the work of \cite{DH,RognesGal,BakerRichterGalois,AkhilGalois}, $E_n$ can be viewed as an ``algebraic closure''  of the $K(n)$-local sphere   $\Sph_{K(n)}$ in $\Sp_{K(n)}$, with $\Morex_n$ as its Galois group. Hence, as in ordinary commutative algebra, one can apply ``Galois decent'' to study the $\infty$-category $\Sp_{K(n)}$ in terms of the, far more tractable, $\infty$-category of $K(n)$-local $E_n$-modules. 

In light of that, it seems beneficial to study Galois extensions of $\Sph_{T(n)}$ in $\Sp_{T(n)}$ as well. In this regard, we have the following result.
\begin{theorem}[\Cref{Tele_Gal}]\label{Thm_A}
    Let $G$ be a finite abelian group. For every $G$-Galois extension $R$ of $\Sph_{K(n)}$ in $\Sp_{K(n)}$, there exists a $G$-Galois extension $R^f$ of $\Sph_{T(n)}$ in $\Sp_{T(n)}$, such that $L_{K(n)}R^f \simeq R$.
\end{theorem}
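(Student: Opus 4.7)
The plan is to reduce the problem to constructing certain ``chromatic cyclotomic extensions'' of $\Sph_{T(n)}$ that lift the classical cyclotomic extensions of $\Sph_{K(n)}$, and then to realize $R^f$ as a Galois-theoretic fixed-point subring inside such an extension.

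First, I would classify finite abelian Galois extensions of $\Sph_{K(n)}$ in terms of continuous surjections from the Morava stabilizer group $\Morex_n$ onto finite abelian groups, i.e.\ via finite quotients of the abelianization $\Morex_n^{\mathrm{ab}}$. A structural analysis of $\Morex_n^{\mathrm{ab}}$ (using the $\widehat{\ZZ}$-factor and the determinant/cyclotomic character of $\aut(\cl{\Gamma})$) shows that every such finite quotient factors through the Galois group of a sufficiently large cyclotomic extension $\Sph_{K(n)}[\omega_m]$, for a suitable positive integer $m$ (possibly divisible by $p$).

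Second, I would construct a $T(n)$-local cyclotomic extension $\Sph_{T(n)}[\omega_m]^f$ whose $K(n)$-localization recovers $\Sph_{K(n)}[\omega_m]$. The key tool here is the higher semiadditivity of $\Sp_{T(n)}$: higher roots of unity can be produced via higher-semiadditive integration over classifying spaces of cyclic groups, the telescopic analogue of adjoining an $m$-th root of unity by splitting $x^m - 1$. I would then verify that the resulting map is an honest faithful Galois extension with Galois group the expected $(\ZZ/m)^{\times}$, using the general Kummer-theoretic formalism promised in the abstract. Third, given an arbitrary finite abelian $G$-Galois extension $R$ of $\Sph_{K(n)}$, Step~1 supplies a surjection from the Galois group of some cyclotomic tower onto $G$; taking the corresponding homotopy fixed points inside $\Sph_{T(n)}[\omega_m]^f$ produces the candidate $R^f$. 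That $R^f$ is finite $G$-Galois over $\Sph_{T(n)}$ follows from the standard behavior of fixed points under finite Galois descent, and the identification $L_{K(n)} R^f \simeq R$ follows from the compatibility of homotopy fixed points (for finite groups) with $L_{K(n)}$, combined with the classification of Step~1.

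The main obstacle lies in the second step: constructing and controlling the $T(n)$-local cyclotomic extensions. Specifically, one must verify (i) that the higher-semiadditive construction of $\omega_m$ yields a genuinely faithful finite Galois extension with the correct Galois group, as opposed to a degenerate or merely pro-Galois object, and (ii) that its $K(n)$-localization agrees on the nose with the classical cyclotomic extension. Both checks are delicate because the usual $E_n$-cohomological tools are unavailable in $\Sp_{T(n)}$, so the verification must be performed intrinsically using the higher-semiadditive calculus of integration and norms, with the telescope conjecture assiduously avoided.
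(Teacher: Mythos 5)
Your overall strategy matches the paper's: classify finite abelian Galois extensions of $\Sph_{K(n)}$ by continuous finite quotients of $\Morex_n^{\ab}\simeq\widehat{\ZZ}\times\ZZ_p^\times$ (the paper's \Cref{Morava_Abelianization}), realize the two factors by prime-to-$p$ cyclotomic (equivalently, spherical Witt vector) extensions and by $p$-power higher cyclotomic extensions built from the higher-semiadditive structure, and lift these to $\Sp_{T(n)}$. The paper reduces a general $\rho\colon\Morex_n^{\ab}\onto G$ to these two one-parameter families directly (using induction along group homomorphisms and the abelian group structure on $\pi_0\GalExt{-}{G}$, cf.\ \Cref{Galois_Group}), whereas you propose to embed $G$ as a quotient of the full $(\ZZ/m)^\times$-Galois group of a large cyclotomic extension and then take homotopy fixed points. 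Your route is viable in principle, but note that the classifying map $\Morex_n\to(\ZZ/m)^\times$ is not surjective, so one must be careful that the given $\rho$ actually factors through a quotient of $(\ZZ/m)^\times$; this is a small extension problem in abelian groups that the paper's decomposition sidesteps.

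The genuine gap is in your Step 2, where you must show that the telescopic higher cyclotomic extensions $\cyc[\Sph_{T(n)}]{p^r}{n}$ are actually $(\ZZ/p^r)^\times$-Galois. You propose to verify this ``using the general Kummer-theoretic formalism promised in the abstract,'' but Kummer theory runs in the opposite direction: it takes a given abelian Galois extension and produces elements of the Picard spectrum (this is how the paper deduces \Cref{Thm_B} from \Cref{Thm_A}). It does not certify that a candidate algebra is Galois. You also suggest the verification ``must be performed intrinsically using the higher-semiadditive calculus,'' but this is precisely the delicate step that, without further input, would seem to require the telescope conjecture. The paper instead uses the nilpotence theorem in the guise of nil-conservativity: the functor $L_{K(n)}\colon\Sp_{T(n)}\to\Sp_{K(n)}$ (and, after that, $E_n\otimes(-)\colon\Sp_{K(n)}\to\Theta_n$) is nil-conservative, the higher cyclotomic extensions are dualizable (\Cref{Cyclotomic_Dualizable_Faithful}), and nil-conservative functors detect the Galois property among dualizable objects (\Cref{Galois_Nil}). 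One then checks that in $\Theta_n$ the extension splits as $\prod_{(\ZZ/p^r)^\times}E_n$ (\Cref{En_Split_Galois}), using the explicit $E_n$-cohomology of $B^n C_{p^r}$ from the character theory of \cite{AmbiKn}. Without this reduction your argument does not close, since you have no internal criterion in $\Sp_{T(n)}$ to rule out the extension being degenerate.

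Two smaller points. First, your phrase ``cyclotomic extension $\Sph_{K(n)}[\omega_m]$ with $m$ possibly divisible by $p$'' conflates ordinary and higher cyclotomic extensions: a $K(n)$-local commutative algebra admits no primitive $p$-power roots of unity in the ordinary sense, so the $p$-part must be the height-$n$ cyclotomic extension $\cyc[\Sph_{K(n)}]{p^r}{n}$, not $\Sph_{K(n)}[\omega_{p^r}]$. You clearly have the right idea, but the distinction should be made explicit. Second, faithfulness of the finite extensions is automatic once they are Galois, because $BG$ is $\Sp_{T(n)}$-ambidextrous (the norm map is an isomorphism), so no separate argument is needed there.
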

In particular, all of the Galois extensions of $\Sph_{K(n)}$, that are classified by finite quotients of the determinant map $\det\colon \Morex_n\to \ZZ_p^\times$, can be lifted to Galois extensions of $\Sph_{T(n)}$ in $\Sp_{T(n)}$. 
In fact, the lifting of the various abelian Galois extensions can be done in a compatible way. In the language of \cite{AkhilGalois}, the localization functor 
$L_{T(n)}\colon \Sp_{T(n)} \to \Sp_{K(n)}$
induces a map on the weak Galois groups (in the opposite direction), and we show that after abelianization this map admits a retract.  
The proof of \Cref{Thm_A} relies on the  $\infty$-semiadditivity of the $\infty$-categories $\Sp_{T(n)}$ (\cite[Theorem A]{TeleAmbi}), and the theory of ``higher cyclotomic extensions'', which we develop in this paper. The latter builds on the theory of semiadditive height and semisimplicity (\cite[Theorem D]{AmbiHeight}). 

A related structural invariant, which is better understood for $\Sp_{K(n)}$ than for $\Sp_{T(n)}$, is the \textit{Picard group}. Recall that for a symmetric monoidal $\infty$-category $\cC$, the Picard group $\Pic(\cC)$ is the abelian group of isomorphism classes of invertible objects in $\cC$ under tensor product. 
While $\Pic(\Sp_{K(n)})$ was intensively studied (e.g. \cite{hopkins1994constructions,lader2013resolution,goerss2015Picard,heard2015morava}), very little is known about $\Pic(\Sp_{T(n)})$. Our second main result concerns the construction of non-trivial elements in $\Pic(\Sp_{T(n)})$.
\begin{theorem}[\Cref{Tele_Pic_Odd}]\label{Thm_B}
    For an odd prime $p$, the group $\Pic(\Sp_{T(n)})$ admits a subgroup isomorphic to $\ZZ/(p-1)$. 
\end{theorem}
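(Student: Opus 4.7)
The plan is to apply \Cref{Thm_A} to a specific $\ZZ/(p-1)$-Galois extension of $\Sph_{K(n)}$, and then extract $p-1$ distinct invertible $\Sph_{T(n)}$-modules via a Kummer-style isotypic decomposition. For odd $p$ one has $\ZZ_p^\times \simeq \ZZ/(p-1)\times \ZZ_p$, so the composition $\Morex_n \xrightarrow{\det} \ZZ_p^\times \twoheadrightarrow \ZZ/(p-1)$ classifies a $\ZZ/(p-1)$-Galois extension $R$ of $\Sph_{K(n)}$. \Cref{Thm_A} then produces a $\ZZ/(p-1)$-Galois lift $R^f$ over $\Sph_{T(n)}$ with $L_{K(n)}R^f\simeq R$.

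Next I would apply the Kummer theory promised in the abstract. Since $p$ is odd, $p-1$ is invertible in $\pi_0\Sph_{T(n)}$, and the Teichmuller lift embeds $\mu_{p-1}\subseteq \ZZ_p\subseteq \pi_0\Sph_{T(n)}$. The $\infty$-semiadditive structure of $\Sp_{T(n)}$ then provides averaging idempotents, one per character $\chi\in \Hom(\ZZ/(p-1),\mu_{p-1})$, decomposing
\[ R^f \simeq \bigoplus_\chi L_\chi \]
as a sum of $\Sph_{T(n)}$-modules. Standard Kummer/Galois descent, using that $R^f\otimes_{\Sph_{T(n)}}R^f\simeq \prod_{g\in \ZZ/(p-1)} R^f$, implies each $L_\chi$ is invertible and that the assignment is a group homomorphism
\[ \alpha\colon \Hom(\ZZ/(p-1),\mu_{p-1})\simeq \ZZ/(p-1)\to \Pic(\Sp_{T(n)}). \]

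To finish I would show $\alpha$ is injective by post-composing with $L_{K(n)}\colon \Pic(\Sp_{T(n)})\to \Pic(\Sp_{K(n)})$. Since the Kummer construction is natural under symmetric-monoidal base change and $L_{K(n)}R^f\simeq R$, the composite $L_{K(n)}\circ \alpha$ agrees with the analogous Kummer map for the $K(n)$-local extension $R$; that is, with the standard embedding $\ZZ/(p-1)\hookrightarrow \Pic(\Sp_{K(n)})$ coming from the torsion of the determinant character, whose injectivity is classical (the characters act distinctly on $\pi_0 E_n$). Hence $\alpha$ is injective and its image is the desired $\ZZ/(p-1)$ subgroup.

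The main obstacle is setting up the Kummer decomposition cleanly in the $T(n)$-local setting: the idempotents $e_\chi$ must be constructed via higher semiadditive integration along $B\ZZ/(p-1)$ rather than from honest ring elements, and one must verify both that each isotypic summand is invertible and that the construction is sufficiently natural for the comparison with $\Pic(\Sp_{K(n)})$. Granting the general Kummer theory the paper develops, the remainder is essentially formal.
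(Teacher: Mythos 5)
Your proposal takes essentially the same route as the paper: the $\ZZ/(p-1)$-Galois extension classified by $\det$ modulo $p$-power order is exactly the height-$n$ cyclotomic extension $\Sph_{T(n)}[\omega^{(n)}_p]$, and the paper also feeds it into its Kummer theory (\Cref{Kummer_Cyclic}) to get the lift $Z_n^f$ and then checks injectivity by comparing to the $K(n)$-local side. The only substantive difference is in the injectivity step: you argue directly that the $p-1$ tensor powers of the isotypic summand have distinct Morava modules, whereas the paper (\Cref{Pic_Kn}) computes the whole group $\Pic^0_n[p-1]\simeq\ZZ/(p-1)$ via the naturality of the Kummer short exact sequence along $\Mod_{\Sph_p}\to\Sp_{K(n)}$ and the identification $\Morex_n^\ab\simeq\widehat{\ZZ}\times\ZZ_p^\times$; that computation then forces the lift to be a section of a map onto $\ZZ/(p-1)$, which gives the direct summand statement for free. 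Your direct Morava-module argument works, but you would need to spell out why the $1$-cocycle classes $k\cdot[\chi \bmod p]$ in $H^1_c(\Morex_n;\pi_0E_n^\times)$ are pairwise distinct for $0\le k\le p-2$, which is the content the paper packages into \Cref{Morava_Abelianization} and \Cref{Pic_Kn}.
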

Moreover, under $K(n)$-localization, this subgroup is mapped isomorphically onto the subgroup of $\Pic(\Sp_{K(n)})$, consisting of objects which are $(p-1)$-torsion and of symmetric monoidal dimension (a.k.a. Euler characteristic) 1. We also construct some non-trivial elements in $\Pic(\Sp_{T(n)})$ for $p=2$, and describe their image in the algebraic Picard group (\Cref{Tele_Pic_Even}). We deduce \Cref{Thm_B} from \cref{Thm_A} by a generalized Kummer theory, which we develop in this paper. 

\subsubsection{Higher cyclotomic extensions}
We shall now outline our approach to \Cref{Thm_A}.
As mentioned above, the Galois extensions of $\Sph_{K(n)}$ are governed by the Lubin-Tate spectrum $E_n$, whose construction relies on the theory of complex orientations. In the absence of an analogue of this construction in the $T(n)$-local world, we take our cue from classical algebra, where we have a natural source of abelian Galois extensions -- the \textit{cyclotomic extensions}.  Namely, for a commutative ring $R$, we have the $m$-th cyclotomic extension 
\(
    R[\omega_m] := R[t]/\Phi_m(t),
\)
where $\Phi_m(t)$ is the $m$-th cyclotomic polynomial, whose roots are the primitive $m$-th roots of unity. If $m$ is invertible in $R$, this extension is Galois (though not necessarily connected) with respect to the natural action of $(\ZZ/m)^\times$.

As a concrete example, consider $R=\QQ_p$. Starting integrally, for every $d\in\NN$, the cyclotomic extension $\ZZ_p[\omega_{p^d-1}]$ splits into a product of copies of the $\ZZ/d$-Galois extension $W(\FF_{p^d})$, exhibiting the latter as a subextension of a cyclotomic one. After inverting $p$, we get $\QQ_p(\omega_{p^d-1})=W(\FF_{p^d})[p^{-1}]$, which assemble into the maximal unramified extension    
$\QQ_p^{\mathrm{ur}} := \bigcup_d \QQ_p(\omega_{p^d-1})$ of $\QQ_p$\footnote{We denote by $\QQ_p(\omega_m)$ the splitting field of $\Phi_m(t)$ over $\QQ_p$, as opposed to $\QQ_p[\omega_m]:=\QQ_p[t]/\Phi_m(t)$ which may be not connected, but rather a product of copies of $\QQ_p(\omega_m)$.}. However, as $p$ is now invertible, we have also the cyclotomic extensions of $p$-power order, which assemble into $\QQ_p(\omega_{p^\infty}) := \bigcup_r \QQ_p[\omega_{p^r}].$
It is a classical theorem in number theory that all \textit{abelian} Galois extensions of $\QQ_p$ can be obtained in this way. 
\begin{thm*}[Kronecker-Weber]
    We have 
    \(
        \Gal(\cl{\QQ}_p/\QQ_p)^\ab \simeq 
        \widehat{\ZZ}\times \ZZ_p^\times,
    \)
    where $\Gal(\cl{\QQ}_p/\QQ_p) \to \widehat{\ZZ}$ classifies $\QQ_p^{\mathrm{ur}}$, and the $p$-adic cyclotomic character $\chi\colon \Gal(\cl{\QQ}_p/\QQ_p) \to \ZZ_p^\times$ classifies $\QQ_p(\omega_{p^\infty})$.
\end{thm*}

Incidentally, for $1\le n< \infty$, we also have 
\(
    \Morex_n^\ab \simeq 
    \widehat{\ZZ}\times \ZZ_p^\times.
\)\footnote{For height $n=1$, this similarity was also discussed in \cite[\S5.5]{RognesGal}.}
The finite Galois extensions of $\Sph_{K(n)}$ that are classified by the map $\Morex_n \to \widehat{\ZZ}$, are the $K(n)$-localizations of the \textit{spherical Witt vectors} $\Sph W(\FF_{p^d})$ (see \cite[Example 5.2.7]{Lurie_Ell2}). Hence, just as for $\QQ_p$, they can be obtained from cyclotomic extensions of order prime to $p$ (see \Cref{K(n)_Local_Witt} and \Cref{Unramified_K(n)_Cyclo}). Similarly, the $T(n)$-localizations of $\Sph W(\FF_{p^d})$ constitute a lift of the said Galois extensions of $\Sph_{K(n)}$ to Galois extensions of $\Sph_{T(n)}$.
However, unlike for $\QQ_p$, the element $p$ is not invertible in $\Sp_{K(n)}$, and in fact, a $K(n)$-local commutative algebra \textit{can not} admit primitive $p$-power roots of unity (\cite[Theorem 1.3]{devalapurkar2020roots}). 
Nevertheless, and it is the main insight leading to the results of this paper, the higher semiadditivity of the $\infty$-categories $\Sp_{K(n)}$ (in the sense of \cite{AmbiKn}) allows one to view the Galois extensions classified by $\det\colon \Morex_n \to \ZZ_p^{\times}$, as a ``higher analogue'' of the classical cyclotomic extensions of $p$-power order. 
Furthermore, the higher semiadditivity of the $\infty$-categories $\Sp_{T(n)}$ (\cite[Theorem A]{TeleAmbi}), is what allows us to construct their $T(n)$-local lifts. We shall now explain these ideas in more detail. 

We begin by reformulating the construction of the (ordinary) cyclotomic extensions in a way which lands itself to $\infty$-categorical generalizations. Recall that a $p^r$-th root of unity in a commutative ring $R$, is a homomorphism $C_{p^r} \to R^\times,$ and it is called \textit{primitive}, if  it is nowhere (in the algebro-geometric sense) of order $p^{r-1}$ (see \Cref{Def_Roots_Of_Unity}). 
For a given $R$, the functor which assigns to every $R$-algebra, the set of its $p^r$-th roots of unity, is corepresented by the group algebra $R[C_{p^r}]$. Consider now the short exact sequence of abelian groups
\[
    0 \to C_p \to C_{p^r} \to C_{p^{r-1}} \to 0,
\]
the associated $R$-algebra homomorphism 
$f\colon R[C_{p^r}] \to R[C_{p^{r-1}}]$, and the set map $\iota\colon C_p \into R[C_{p^r}].$ If $p$ is invertible in $R$, we can define the idempotent
\[
    \varepsilon := 
    \frac{1}{p}\sum_{g\in C_p} \iota(g) \qin
    R[C_{p^r}],
\]
which splits $f$. That is, inverting $\varepsilon$ and $1-\varepsilon$ respectively, yields a decomposition 
\[
    R[C_{p^r}] \simeq R[C_{p^{r-1}}] \times R[\omega_{p^r}].
\]
In particular, we get that the cyclotomic extension $R[\omega_{p^r}]$ corepresents the set of \textit{primitive} $p^r$-th roots of unity. 
Furthermore, the natural action of the group $(\ZZ/p^r)^\times$ on $C_{p^r}$ induces an action on the group algebra $R[C_{p^r}]$, which then restricts to the cyclotomic extension $R[\omega_{p^r}]$ by the invariance of $\varepsilon$, making it a $(\ZZ/p^r)^\times$-Galois extension of $R$.
Reformulated in this way, the construction of the cyclotomic extensions can be carried out for a commutative algebra object $R$ in any additive symmetric monoidal $\infty$-category, provided that $p$ is an invertible element in the ring $\pi_0(R)$ (see \cite[Theorem 3]{schwanzl1999roots}). An extension of these ideas, which allows adjoining roots of any invertible element, was studied in \cite{lawson2020roots}.

To define \textit{higher} cyclotomic extensions, we first observe that for a commutative algebra $R$ in a symmetric monoidal $\infty$-category $\cC$, the group-like commutative monoid  (or equivalently, the connective spectrum) of units $R^\times$, need not be discrete in general. Taking advantage of that, we define a \textit{height $n$ root of unity} of $R$, to be a morphism of the form $C_{p^r} \to \Omega^nR^\times$. For such a higher root of unity, we also have a corresponding notion of primitivity (see \Cref{Def_Higher_Roots_Of_Unity}).  The functor assigning to each $R$-algebra the space of its height $n$ roots of unity is corepresented by the higher group algebra $R[B^nC_{p^r}]$. By analogy with the above, we consider the fiber sequence 
\[
    B^nC_p \to B^nC_{p^r} \to B^nC_{p^{r-1}},
\]
the associated morphism of commutative $R$-algebras 
$f\colon R[B^nC_{p^r}] \to R[B^nC_{p^{r-1}}]$,
and map of spaces 
\[
    \iota \colon B^nC_p \to \Map(\one_\cC, R[B^nC_{p^r}]).
\]
To proceed, assume now that $\cC$ is stable and $n$-semiadditive. This allows one to integrate families of morphisms in $\cC$ indexed by $n$-finite spaces. In particular, we can consider the \textit{cardinality} of the $n$-finite space $B^nC_p$. This is given by integrating over $B^nC_p$, the unit map $\one_\cC \oto{1_R} R$, and is denoted by
\[
    |B^nC_p| := \int_{B^nC_p} 1_R \qin 
    \pi_0(R) = \pi_0\Map(\one_\cC,R).
\]
Recall from \cite[Definition 3.1.6]{AmbiHeight}, that $R$ is said to be of height $\le n$, if the element $|B^nC_p|$ is invertible in $\pi_0(R)$, in which case we can define
\[
    \varepsilon := 
    \frac{1}{|B^nC_p|} \int_{x\in B^nC_p} \iota(x) \qin
    \pi_0(R[B^nC_{p^r}]).
\]
Note that for $n=0$ we have $|C_p|=p$, hence, $R$ is of height $0$ precisely when $p$ is invertible in the ring $\pi_0(R)$. We then show that, as in the case $n=0$, the element $\varepsilon$ is idempotent and induces a $(\ZZ/p^r)^\times$-equivariant decomposition 
\[
    R[B^nC_{p^r}] \simeq 
    R[B^nC_{p^{r-1}}] \times \cyc[R]{p^r}{n},
\]
such that the projection onto the first factor coincides with $f$ (\Cref{Transfer_Idempotent}). As a result, the commutative $R$-algebra $\cyc[R]{p^r}{n}$ corepresents the space of \textit{primitive} $p^r$-th roots of unity of height $n$ (\Cref{Cyclo_Rep_Primitive}). We call  $\cyc[R]{p^r}{n}$ the \textit{height $n$ cyclotomic extension} of $R$ of order $p^r$.

To apply the abstract construction of higher cyclotomic extensions to the chromatic world, we recall from \cite[\S 4.4]{AmbiHeight}, that the semiadditive height generalizes the chromatic height. Namely, all objects of $\Sp_{T(n)}$, and hence also of $\Sp_{K(n)}$, are of semiadditive height exactly $n$. We then prove that the resulting $(\ZZ/p^r)^\times$-equivariant algebras $\Sph_{K(n)}[\omega_{p^r}^{(n)}]$ are Galois (\Cref{Cyclo_Galois}). Furthermore, by comparing the infinite cyclotomic extension $\Sph_{K(n)}[\omega_{p^\infty}^{(n)}]$, with Westerland's $R_n$ \cite{Westerland}, we deduce that it is 
classified by\footnote{This requires one to choose a normalizable formal group law in the sense of \cite[Definition 5.3.1]{AmbiKn}. In \cite{Westerland},
Westerland uses the Honda formal group law, in which case one has to replace $\det$ with $\detpm$.} $\det\colon \Morex_n \to \ZZ_p^\times$
(see \Cref{Cyc_Char_K(n)} and the following discussion). Thus, the determinant map can be viewed as the higher chromatic analogue of the $p$-adic cyclotomic character. 
In the same spirit, the realization of all the abelian Galois extensions of $\Sph_{K(n)}$ in terms of (ordinary and higher) cyclotomic extensions can be viewed as the higher chromatic analogue of the Kronecker-Weber theorem. Finally, we deduce \Cref{Thm_A} from the above, by showing that the $T(n)$-local higher cyclotomic extensions $\cyc[\Sph_{T(n)}]{p^r}{n}$ are Galois as well (\Cref{Cyclo_Galois}), using the nilpotence theorem in the guise of ``nil-conservativity'' (see \cite[\S4.4]{TeleAmbi}).

\subsubsection{Kummer theory}

We now outline the relationship between abelian Galois extensions and the Picard spectrum, which allows us to deduce \Cref{Thm_B} from \Cref{Thm_A}. Classically, given a field $k$ which admits a primitive $m$-th root of unity, and a finite abelian group $A$ which is $m$-torsion, Kummer theory identifies the set of isomorphism classes of  $A$-Galois extensions of $k$, with $\Ext^1_\ZZ(A^*,k^\times),$ where $A^*=\hom(A,\QQ/\ZZ)$ is the Pontryagin dual of $A$\footnote{E.g., for $A=\ZZ/m$, we have $\Ext^1_\ZZ((\ZZ/m)^*,k^\times) = (k^\times)/(k^\times)^m$, so that cyclic Galois extensions of order $m$ are classified by invertible elements of the base field up to $m$-th powers, see \cite{birch1967cyclotomic}.}. 
One way to construct this identification is to observe that for every $A$-Galois extension $L/k$ we can simultaneously diagonalize the action of all the elements of $A$ on $L$, producing an eigenspace decomposition
\(
    L \simeq \bigoplus_{\varphi \in A^*} L_\varphi
\)
as $k$-vector spaces. The $L_\varphi$-s turn out to be all 1-dimensional, and the multiplication of $L$ restricts to give isomorphisms 
$L_\varphi \otimes L_\psi \iso L_{\varphi + \psi}.$
As a result, a choice of basis elements $0 \neq x_\varphi \in L_\varphi$ provides a 1-cocycle representative of a class in the group $\Ext^1_\ZZ(A^*,k^\times)$, which can then be shown to depend only on $L$ and to completely characterize it.

For a more general commutative ring $R$, which admits a primitive $m$-th root of unity (so, in particular, $m\in R^\times$), and an $A$-Galois extension $S$ of $R$, one can still produce a decomposition
\(
    S \simeq \bigoplus_{\varphi \in A^*} R_\varphi
\) 
and isomorphisms 
$R_\varphi \otimes R_\psi \iso R_{\varphi + \psi}$
as before. However, this only implies that the $R_\varphi$-s are \textit{invertible} $R$-modules, rather than that $R_\varphi \simeq R$. This leads to a classification of $A$-Galois extensions of $R$, which involves both the Picard group $\Pic(R)$ and the group of units $R^\times$. These groups can be recognized as the $\pi_0$ and $\pi_1$ respectively, of the \textit{Picard spectrum} of $R$, which we denote by $\pic(R)$. 

Generalizing this, we show that for every additive presentable symmetric monoidal $\infty$-category $\cC$, such that the commutative ring $\pi_0\one$ admits a primitive $m$-th root of unity, there is a homotopy equivalence of spaces (\Cref{Kummer_Theory})
\[
    \GalExt{\cC}{A} \simeq \Map_{\Sp^\cn}(A^*, \pic(\cC)),
\]
where on the left-hand side, we have the full subcategory (which turns out to be an $\infty$-groupoid) of $\calg(\cC)^{BA}$ consisting of $A$-Galois extensions of the unit $\one_\cC$. This can be considered as a general form of ``Kummer theory'' in the context of $\infty$-categories. The main difficulty in establishing the above homotopy equivalence is to handle the multiplicativity of the eigenspace decomposition coherently. To this end, we realize the eigenspace decomposition, under the above assumptions, as a symmetric monoidal equivalence (\Cref{DFT_Equiv})
\[
    \Fur \colon
    \Fun(BA, \cC)_{\mathrm{Ptw}} \iso 
    \Fun(A^*, \cC)_{\Day},
\]
where the subscript `$\mathrm{Ptw}$' indicates the usual point-wise symmetric monoidal structure, while the subscript `$\Day$' indicates the Day-convolution symmetric monoidal structure. This can be viewed as a general form of the discrete Fourier transform. 

Applying the above to $A=\ZZ/m$ and taking $\pi_0$, gives rise to a (non-canonically) split short exact sequence of abelian groups (\Cref{Kummer_Cyclic})
\[
    0 \to
    (\pi_0\one^\times) / (\pi_0\one^\times)^m \to 
    \pi_0 \GalExt{\cC}{\ZZ/m} \to 
    \Pic^{\re}(\cC)[m] \to 
    0,
\]
where $\Pic^\re(\cC) \le \Pic(\cC)$ is the subgroup of invertible objects of monoidal dimension $1$ (\Cref{Def_Pic_Even}), and $\Pic^\re(\cC)[m]$ is its $m$-torsion subgroup.
We note that when $\cC$ is $p$-complete for some odd prime $p$, the $\ZZ_p$-algebra $\pi_0\one$ always admits primitive $(p-1)$-st roots of unity. Thus, to every $\ZZ/(p-1)$-Galois extension of $\one$, corresponds a (possibly trivial) $(p-1)$-torsion element of $\Pic^\re(\cC)$. 

Specializing to the chromatic world, we show that the $K(n)$-local Picard object $Z_n$, which corresponds to the higher cyclotomic $\ZZ/(p-1)$-Galois extension $\cyc[\Sph_{K(n)}]{p}{n}$, generates the group (\Cref{Pic_Kn})
\[
    \Pic^\re(\Sp_{K(n)})[p-1] \simeq \ZZ/(p-1).
\]
We deduce that $\cyc[\Sph_{T(n)}]{p}{n}$ corresponds to a $T(n)$-local Picard object $Z_n^f \in \Pic^\re(\Sp_{T(n)})[p-1]$, which lifts $Z_n$, implying \Cref{Thm_B}. 
We use a variation of the above method to produce non-trivial $T(n)$-local Picard objects in the case $p=2$ as well, using the three $\ZZ/2$-subextensions of $\cyc[\Sph_{T(n)}]{8}{n}$ (\Cref{Tele_Pic_Even}). 

\subsubsection{Faithfulness \& descent}
Taking the colimit over all $p^r$-th cyclotomic extensions, we  obtain the infinite cyclotomic extension 
\[
    R_n := \cyc[\Sph_{K(n)}]{p^\infty}{n} =
    \colim\, \cyc[\Sph_{K(n)}]{p^r}{n}.
\]
This continuous $\ZZ_p^\times$-Galois extension of $\Sph_{K(n)}$, which is classified by $\det\colon \Morex_n \to \ZZ_p^\times$, enables several key constructions in $\Sp_{K(n)}$. Among them, are the class $\zeta_n \in \pi_{-1}(\Sph_{K(n)})$ (see \cite[\S8]{DH})
and the determinant sphere 
$\Sphdet{K(n)} \in \Pic(\Sp_{K(n)})$
(see \cite{barthel2022constructing, Westerland, goerss2005resolution}).
Using our results, we can similarly construct the $T(n)$-local infinite cyclotomic extension $R_n^f := \cyc[\Sph_{T(n)}]{p^\infty}{n}$. Assuming $R_n^f$ is \textit{faithful}, one could lift $\zeta_n$ and $\Sphdet{K(n)}$ to the $T(n)$-local world. However, while all \textit{finite} Galois extensions of $\Sph_{T(n)}$ are faithful, we do not know whether the infinite Galois extension $R_n^f$ is faithful\footnote{In an earlier stage of this project, we believed that we have a proof for the faithfulness of $R_n^f$, which led to \cite[Remark 8.5.3]{BBBCX2019}. However, while writing this paper we have discovered a crucial gap in the argument. For a more thorough discussion see \cite{Fourier}.}. 
As far as we know, $R_n^f$ might be even $K(n)$-local, in which case the faithfulness of $R_n^f$ would be equivalent to the telescope conjecture. 
As an example, one can argue directly to show that $R_1^f$ is both faithful and isomorphic to $R_1$, which leads to a new proof of the telescope conjecture at height $n=1$. A more detailed account of this circle of ideas will appear elsewhere.

\subsection{Conventions}

Throughout the paper, we work in the framework of $\infty$-categories
(a.k.a. quasi-categories), and in general follow the notation of \cite{htt}
and \cite{HA}. The terminology and notation for all concepts related to higher semiadditivity and (semiadditive) height are as in \cite{AmbiHeight}. In addition,
\begin{enumerate}
    \item We use the notation $\hom(X,Y)$ for the enriched/internal hom-objects, as opposed to $\Map(X,Y)$ which always denotes the mapping space. 
    
    \item For an object $X$ in a monoidal $\infty$-category $\cC$, we write $\Omega^\infty X$ for $\Map(\one,X)$ and 
    $\pi_0X$ for $\pi_0\Map(\one,X)$.
    
    \item We denote by $\Prl$ the $\infty$-category of presentable $\infty$-categories and colimit preserving functors, and by 
    \[
        \Prl_{\tsadi_n} \sseq \Prl_{\st}^{\sad n} \sseq \Prl_{\st} \sseq \Prl_\add \sseq \Prl
    \]
    the full subcategories spanned by $\infty$-categories which are additive, stable, stable $n$-semiadditive and stable $n$-semiadditive of semiadditive height $n$ (with respect to an implicit prime $p$).\footnote{In the language of \cite[\S 5.2]{AmbiHeight}, these properties are classified by the \textit{modes} $\Sp^\cn$, $\Sp$, $\tsadi^{[n]}$, and $\tsadi_n$ respectively.} 
    
    \item For an abelian group $A$ and a natural number $m$ we denote by  $A[m]$ the subgroup of $m$-torsion elements in $A$. 
\end{enumerate}

\subsection{Acknowledgments}

We would like to thank Tobias Barthel, Clark Barwick, Agn{\`e}s Beaudry, Mark Behrens, Robert Burklund, Ehud de Shalit, Jeremy Hahn, Mike Hopkins, John Rognes, Nathaniel Stapleton, Dylan Wilson, and Allen Yuan for useful discussions. We also like to thank the entire Seminarak group, especially Shaul Barkan and Shay Ben Moshe, for useful comments on the paper's first draft. Finally, we would like to thank the anonymous referee for numerous corrections and suggestions, which greatly improved the paper.

The first author is supported by the Adams Fellowship Program of the Israel Academy of Sciences and Humanities. 
The second author is supported by ISF1588/18 and BSF 2018389.  

\section{Galois Theory}
We begin by discussing some special features of Galois extensions following \cite{RognesGal}, under the assumption that the classifying space of the acting group is ambidextrous with respect to the $\infty$-category. We shall work mainly in the setting of \emph{additive} presentable $\infty$-categories.  These include the stable presentable $\infty$-categories as well as ordinary additive presentable categories, such as that of abelian groups. 

The main result of \S 2.1, is that the Galois property can be detected by nil-conservative functors (\Cref{Galois_Nil}), and of \S 2.2, that it can be characterized by the fact that a certain associated lax symmetric monoidal functor is strong monoidal (\Cref{func_crit_galois}).

\subsection{Definition and Detection}

We begin by recalling some terminology and notation regarding local systems. Given an $\infty$-category $\cC$ and a space $A$ we denote by $\cC^A$ the $\infty$-category of functors $A \to \cC$ and refer to its objects as $\cC$-valued local systems on $A$. If $\cC$ is (symmetric) monoidal then $\cC^A$ is (symmetric) monoidal with respect to the pointwise structure. For a map $f \colon A \to B$, restriction along it provides a functor $f^*\colon \cC^B \to \cC^A$, which is (symmetric) monoidal when $\cC$ is. When $\cC$ admits $A$-shaped limits/colimits, $f^*$ admits a left/right adjoint which we denote by $f_!$ and $f_*$ respectively.

When $f\colon A\to B$ is weakly $\cC$-ambidextrous in the sense of \cite[Definition 4.1.11]{AmbiKn}, there is a canonical norm map $\Nm_f \colon f_! \to f_*$ and $f$ is called $\cC$-ambidextrous if $\Nm_f$ is an isomorphism. In this case, we obtain for every pair of objects $X,Y\in \cC^B$ an integration operation (see \cite[Definition 2.1.11]{TeleAmbi})
\[
    \int_f \colon \Map(f^*X,f^*Y) \to \Map(X,Y).
\]
In the special case of $f\colon A \to \pt$ we denote $\int_f \Id_X\colon X \to X$ by $|A|$ and think of it as ``multiplication by the cardinality of $A$''.
Recall also that $\cC$ is said to be $m$-semiadditive if every $m$-finite map is $\cC$-ambidextrous.

We next recall the definition of a Galois extension from \cite{RognesGal}:

\begin{defn}[Rognes] \label{Galois} 
    Let $\cC\in\calg(\Pr_{\add})$,
    let $G$ be a finite group and let $R\in\calg(\cC^{BG})$.
    We say that $R$ is a \textbf{$G$-Galois} extension (or just Galois)
    if it satisfies the following two conditions:
    \begin{enumerate}
        \item The canonical map $\one\to R^{hG}$ is an isomorphism.
        \item The canonical map $R\otimes R\to\prod_{G}R$, given informally by
        $x\otimes y\mapsto(x\cdot\sigma y)_{\sigma\in G}$, is an isomorphism. 
    \end{enumerate}
    A Galois extension $R$ is called \textbf{faithful} if in addition the functor 
    \[
        (-)\otimes R \colon \cC \longrightarrow \cC
    \]  
    is conservative. We denote by $\GalExt{\cC}{G} \sseq \calg(\cC^{BG})$ the full subcategory spanned by $G$-Galois extensions. 
\end{defn}

\begin{rem}
    For $S\in \calg(\cC)$, by a $G$-Galois extension of $S$, we shall mean a $G$-Galois extension in the symmetric monoidal $\infty$-category $\Mod_S(\cC)$ in the sense of \Cref{Galois}. 
\end{rem}

\begin{rem} \label{rem:Galois_semiadd_faithful}
    It is proved in \cite[Proposition 6.3.3]{RognesGal} that faithfulness of a Galois extension $R$ is equivalent to the condition that the norm map 
    $$\Nm\colon R_{hG} \to R^{hG}$$ 
    is an isomorphism. We shall be particularly interested in situations where $BG$ is $\cC$-ambidextrous (e.g. when $\cC$ is $1$-semiadditive or $|G|$ is invertible in $\cC$), in which case this condition is satisfied automatically. 
\end{rem}

Unlike in the classical Galois theory for fields, Galois extensions are not required to be \emph{connected}. 
In particular, for every group $G$, there is always the ``trivial'' $G$-extension:
\begin{example}[Split Galois extension]\label{Galois_Split}
    Let $\cC\in\calg(\Pr_{\add})$ and let $G$ be a finite group with $e\colon \pt \to BG$ the inclusion of the basepoint. The functor $e_*\colon \cC \to \cC^{BG}$ is lax symmetric monoidal and the induced object 
    \[
        e_*\one \simeq \prod_{G}\one\quad\in\quad\calg(\cC^{BG}) 
    \]
    is a $G$-Galois extension, where $G$ acts by permuting the factors according to the regular action of $G$ on itself. We say that a $G$-Galois extension $R$ is \textbf{split} if it is isomorphic to $e_*\one$ as a $G$-equivariant commutative algebra. 
\end{example}

The underlying object of a Galois extension is always dualizable  (see \cite[Proposition 6.2.1]{RognesGal} and \cite[Proposition 6.14]{AkhilGalois}). To detect Galois extensions, it will be useful to establish certain closure properties for dualizable objects.

\begin{prop}
\label{Dualizable_Colimit} 
Let $\cC \in \calg(\Pr)$ and let $I\in \cat_\infty$. If the tensor product of $\cC$ preserves $I^{\op}$-shaped limits in each variable, then the dualizable objects in $\cC$ are closed under $I$-shaped colimits.
\end{prop}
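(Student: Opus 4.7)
The plan is to verify the standard closed-form dualizability criterion. Since $\cC\in\calg(\Pr)$, its tensor product is bicontinuous, so by the adjoint functor theorem $\cC$ is closed symmetric monoidal with internal hom $\hom(-,-)$. An object $Y\in\cC$ is then dualizable if and only if the canonical assembly map
\[
    \mu_{Y,Z}\colon \hom(Y,\one)\otimes Z \longrightarrow \hom(Y,Z)
\]
is an equivalence for every $Z\in\cC$; moreover, $\mu$ is natural in both variables, and $\hom(-,W)$ sends colimits in $\cC$ to limits for every $W$.

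Let $X_\bullet\colon I\to\cC$ be a diagram with each $X_i$ dualizable, set $X:=\colim_I X_\bullet$, and fix $Z\in\cC$. First I would assemble the chain of natural equivalences
\[
    \hom(X,Z)
    \;\iso\; \lim_{i\in I^{\op}} \hom(X_i,Z)
    \;\iso\; \lim_{i\in I^{\op}} \bigl(\hom(X_i,\one)\otimes Z\bigr)
    \;\iso\; \Bigl(\lim_{i\in I^{\op}} \hom(X_i,\one)\Bigr)\otimes Z
    \;\iso\; \hom(X,\one)\otimes Z.
\]
The outer two equivalences come from $\hom(-,W)$ sending colimits to limits. The second applies $\mu_{X_i,Z}$ pointwise and is an equivalence by the dualizability of each $X_i$. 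The middle equivalence is where the hypothesis is used: by assumption, $(-)\otimes Z$ preserves $I^{\op}$-shaped limits.

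Finally, I would appeal to naturality of $\mu$ in the first variable with respect to the cocone $X_\bullet\to X$ in order to identify the above composite with $\mu_{X,Z}$ itself; concretely, the naturality squares for the maps $X_i\to X$ fit together to show that the diagonal composite $\hom(X,\one)\otimes Z \to \hom(X,Z)$ constructed above agrees with the canonical assembly map. Hence $\mu_{X,Z}$ is an equivalence for every $Z$, and $X$ is dualizable.

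The argument is essentially formal, with the single non-formal input --- the hypothesis that $(-)\otimes Z$ preserves $I^{\op}$-shaped limits --- entering exactly once at the middle step. I expect the main (mild) point of care to be the final identification of the composite equivalence with $\mu_{X,Z}$ at the level of $\infty$-categorical coherence data; this should follow cleanly from functoriality of $\mu$ in its first variable, but is the only place one is not merely chasing abstract nonsense.
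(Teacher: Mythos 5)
Your argument is correct and is essentially the paper's own proof: both use the criterion that $X$ is dualizable iff the assembly map $\hom(X,\one)\otimes Z\to\hom(X,Z)$ is an equivalence for all $Z$, then pass through $\lim\hom(X_i,-)$, apply dualizability of each $X_i$, and use the hypothesis exactly once to pull $\otimes Z$ out of the limit. The paper packages the chain as one commutative square with the assembly map as the top edge, which makes the final coherence identification you flag automatic; that is the only (cosmetic) difference.
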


\begin{proof}
    We denote by $\hom(X,Y)\in \cC$ the internal hom-object of $X,Y\in\cC$. An object $X\in\cC$ is dualizable if and only if for every $Y\in\cC$, the canonical map 
    \[
        \hom(X,\one) \otimes Y \to 
        \hom(X,\one \otimes Y) \simeq
        \hom(X,Y)
    \]
    is an isomorphism (e.g. see \cite[Theorem 2.2]{ponto2014linearity}). Let $X = \colim_{a\in I} X_a$, such that $X_a \in \cC$ is dualizable for all $a\in I$. For every $Y\in \cC$ the canonical map above fits into a commutative diagram
   \begin{align*}
       \xymatrix{
       \hom(\colim_{a\in I}X_{a},\one)\otimes Y\ar[d]^\wr\ar[rr] &  &
       \hom(\colim_{a\in I}X_{a},Y)\ar[d]^\wr\\
       (\invlim_{a\in I^{\op}}\hom(X_{a},\one))\otimes Y\ar[r]^\sim & 
       \invlim_{a\in I^{\op}}(\hom(X_{a},\one)\otimes Y)\ar[r]^\sim &
       \invlim_{a\in I^{\op}}\hom(X_{a},Y).
    }
    \end{align*}
    The vertical arrows are isomorphisms since $\hom(-,-)$ takes colimits in the first variable into limits. The bottom left arrow is an isomorphism because the tensor product preserves $I^\op$-limits in each variable and the bottom right arrow is an isomorphism because each $X_a$ is dualizable. It follows that the top map is an isomorphism and hence that $X$ is dualizable.
\end{proof}

\begin{rem}\label{Stable_Dualizable}
    For $\cC$ stable, the tensor product preserves finite limits, and we recover the classical fact that dualizable objects in $\cC$ are closed under finite colimits (see, e.g., \cite{may2001additivity}).
\end{rem}

\begin{cor}
\label{Dualizable_Sad}
    Let $\cC \in \calg(\Pr)$ and let $A$ be a $\cC$-ambidextrous space. The dualizable objects in $\cC$ are closed under $A$-shaped limits and colimits.
\end{cor}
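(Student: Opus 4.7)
The plan is to leverage ambidexterity to reduce everything to \Cref{Dualizable_Colimit}. That proposition tells us that dualizable objects in $\cC$ are closed under $I$-shaped colimits whenever the tensor product preserves $I^{\op}$-shaped limits in each variable. So the goal is to verify this hypothesis for $I = A$, and then separately to leverage ambidexterity to turn the conclusion about colimits into one about limits as well.

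First I would observe that since $A$ is a space (i.e.\ an $\infty$-groupoid), we have $A \simeq A^{\op}$, so it suffices to show that the tensor product of $\cC$ preserves $A$-shaped limits in each variable. Here is where the ambidexterity hypothesis enters: by definition of a $\cC$-ambidextrous space, the norm map exhibits the $A$-shaped colimit functor $\cC^A \to \cC$ as both left and right adjoint to the constant diagram functor, so $A$-shaped limits and $A$-shaped colimits in $\cC$ agree (canonically via the norm). Since $\cC \in \calg(\Prl)$, the tensor product preserves all colimits in each variable and, in particular, $A$-shaped colimits; by the identification above, it therefore also preserves $A$-shaped limits in each variable.

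With this hypothesis verified, \Cref{Dualizable_Colimit} applied to $I = A$ gives closure of dualizable objects under $A$-shaped colimits. Finally, invoking ambidexterity once more (this time in the ambient $\cC$, not at the level of tensor products) identifies any $A$-shaped limit of dualizable objects with the corresponding $A$-shaped colimit of the same diagram, which is dualizable by what we just proved. This yields closure under $A$-shaped limits as well.

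The argument is essentially routine once one notices that ambidexterity plays two distinct roles: converting the colimit-preservation of $\otimes$ into limit-preservation in order to apply \Cref{Dualizable_Colimit}, and then converting the resulting closure under colimits back into closure under limits. The only thing one needs to verify with any care is that the ambidextrous identification of $A$-limits and $A$-colimits is compatible with tensoring against a fixed object, which is immediate from the naturality of the norm map in $\cC$.
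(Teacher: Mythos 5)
Your proof is correct and follows essentially the same route as the paper's: use ambidexterity to identify $A$-shaped limits with $A$-shaped colimits (both to reduce the limit case to the colimit case, and to verify the hypothesis of \Cref{Dualizable_Colimit} that $\otimes$ preserves $A$-shaped limits in each variable), then apply \Cref{Dualizable_Colimit}. The only difference is cosmetic: where you appeal to the naturality of the norm to argue that colimit-preserving functors preserve ambidextrous limits, the paper cites this fact directly as Proposition 2.1.8 of \cite{AmbiHeight}.
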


\begin{proof}
    Since $A$ is $\cC$-ambidextrous, $A$-shaped limits coincide with $A$-shaped colimits. It, therefore, suffices to show that the dualizable objects are closed under $A$-shaped colimits. Moreover, since the tensor product preserves $A$-shaped colimits in each variable, it also preserves $A$-shaped limits in each variable (see \cite[Proposition 2.1.8]{AmbiHeight}). Therefore, the claim follows from \Cref{Dualizable_Colimit}.
\end{proof}

\begin{rem}\label{Dualizable_Constant}
    The special case of a constant $A$-shaped colimit was already treated in \cite[Proposition 3.3.6]{TeleAmbi}, where we have further shown that $\dim(A\otimes\one) = |LA| \in \pi_0\one$ (\cite[Corollary 3.3.10]{TeleAmbi}).
\end{rem}

The following proposition shows that in the stable setting, the Galois property can be detected by \emph{nil-conservative} functors (\cite[Definition 4.4.1]{TeleAmbi}).

\begin{prop} \label{Galois_Nil}
    Let $F\colon\cC\to\cD$ be a nil-conservative functor in $\calg(\Pr_{\st})$, let $G$ be a finite group such that $BG$ is $\cC$-ambidextrous, and let $R\in\calg(\cC^{BG})$. If $F(R)\in\calg(\cD^{BG})$ is Galois and $R$ is dualizable in $\cC$, then $R$ is Galois.
 \end{prop}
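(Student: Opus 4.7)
The Galois property for $R$ amounts to two canonical maps
\[
\phi\colon \one_\cC \to R^{hG}, \qquad \psi\colon R \otimes R \to \prod_G R
\]
being isomorphisms in $\cC$. The plan is to verify that both sides of each map are dualizable in $\cC$, so that after applying $F$ (which preserves tensor products, finite coproducts, and interacts well with $BG$-ambidexterity), the maps become the canonical Galois maps for $F(R)$ in $\cD$, which are isomorphisms by hypothesis. Nil-conservativity then forces the cofibers of $\phi$ and $\psi$ to vanish in $\cC$.

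For dualizability: $R$ is dualizable by assumption, $R \otimes R$ is dualizable as a tensor product of dualizables, and $\prod_G R$ is dualizable since finite products coincide with finite coproducts in the stable $\cC$ (\Cref{Stable_Dualizable}). The object $R^{hG}$ is dualizable by \Cref{Dualizable_Sad}, using that $BG$ is $\cC$-ambidextrous. For the interaction with $F$: as $F$ is symmetric monoidal and colimit-preserving, it preserves tensor products and finite coproducts (hence finite products in the stable setting), so $F(\psi)$ is identified with the analogous Galois map for $F(R)$. Moreover, $F$ transports the $\cC$-ambidexterity datum for $R$ into an ambidexterity datum for $F(R)$ in $\cD$, yielding
\[
F(R^{hG}) \simeq F(R)_{hG} \simeq F(R)^{hG},
\]
so that $F(\phi)$ is also identified with the corresponding Galois map for $F(R)$.

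Since $F(R)$ is Galois in $\cD$, both $F(\phi)$ and $F(\psi)$ are isomorphisms. Therefore the cofibers of $\phi$ and $\psi$ in $\cC$---dualizable by stability (\Cref{Stable_Dualizable})---have vanishing image under $F$. Invoking the nil-conservativity of $F$ (in the form developed in \cite[\S 4.4]{TeleAmbi}, which reflects zero on dualizable objects via suitably associated commutative algebras) forces both cofibers to vanish in $\cC$. Hence $\phi$ and $\psi$ are isomorphisms, and $R$ is $G$-Galois. The main technical points are the transfer of the ambidexterity datum along $F$, ensuring that $F(\phi)$ is genuinely the Galois map for $F(R)$, and the reflection-on-dualizables property of nil-conservative functors; the latter is the essential leverage of the dualizability hypothesis on $R$, without which nil-conservativity alone would not suffice to descend the Galois conditions from $\cD$ to $\cC$.
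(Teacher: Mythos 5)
Your proposal is correct and follows essentially the same route as the paper's proof: establish dualizability of the relevant objects (via \Cref{Dualizable_Sad} for $R^{hG}$ and standard stable/tensor arguments for the rest), show that $F$ carries the two Galois comparison maps for $R$ to those for $F(R)$ using that $F$ preserves colimits and $BG$-limits once $BG$ is known to be $\cD$-ambidextrous, and then invoke that nil-conservative functors are conservative on dualizable objects. Your cofiber phrasing of the last step is an equivalent reformulation of the paper's direct appeal to \cite[Proposition 4.4.4]{TeleAmbi}; the only cosmetic difference is that the paper explicitly cites \cite[Corollary 3.3.2]{TeleAmbi} for $BG$ being $\cD$-ambidextrous, whereas you gesture at it informally as ``$F$ transports the ambidexterity datum.''
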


\begin{proof}
    First, by \cite[Corollary 3.3.2]{TeleAmbi}, the space $BG$ is also $\cD$-ambidextrous. 
    Now, since $BG$ is $\cC$- and $\cD$-ambidextrous and $F$ preserves colimits, $F$ also preserves $BG$-shaped limits by \cite[Proposition 2.1.8]{AmbiHeight}. Thus, applying $F$ to the maps
    \[
        \one \to R^{hG}\quad,\quad R\otimes R\to \prod_G R
    \]    
    in conditions (1) and (2) of \Cref{Galois}, we get the corresponding maps 
    \[
        \one \to F(R)^{hG}\quad,\quad F(R)\otimes F(R)\to \prod_G F(R)
    \]
    for $F(R)\in\calg(\cD^{BG})$. Since $F(R)$ is Galois, these maps are isomorphisms.
    Since the underlying object of $R$ is dualizable in $\cC$, all the objects $\one$, $R^{hG}$, $R\otimes R$ and $\prod_G R$ are dualizable as well. Indeed, $R^{hG}$ is dualizable by \Cref{Dualizable_Sad}, and the other three by standard arguments. Thus, as nil-conservative functors are conservative on dualizable objects (see \cite[Proposition 4.4.4]{TeleAmbi}), we get that $R$ is Galois.
\end{proof}

\subsection{Twisting Functors}
It will be useful for the sequel, to observe that the Galois property can be characterized using the following notion:
\begin{defn}
    \label{Twisting_Functor}
    For every $\cC \in \calg(\Pr_\add)$ and 
    $R\in \calg(\cC^{BG}),$ 
    we define the \textbf{twisting functor} of $R$ to be the composition 
    \[
        T_R \colon
        \cC^{BG} \oto{R\otimes (-)} 
        \cC^{BG} \oto{(-)^{hG}} 
        \cC.
    \] 
\end{defn}

The functor $T_R$ is lax symmetric monoidal as a composition of the lax symmetric monoidal functor $(-)^{hG}$ and the functor $R\otimes(-)$, which is itself lax symmetric monoidal as a composition of the functors in the free-forgetful symmetric monoidal adjunction 
\[
    \cC^{BG} \oto{F} 
    \Mod_R(\cC^{BG}) \oto{U} 
    \cC^{BG}.
\]
We note the following immediate consequence of assuming that $BG$ is $\cC$-ambidextrous: 
\begin{lem}
\label{Twisting_Functor_Linear}
    If $BG$ is $\cC$-ambidextrous, then $T_R$ preserves colimits and is $\cC$-linear in the sense that for all $X\in\cC^{BG}$ and $Z\in \cC$, the canonical map 
    \[
        T_R(X)\otimes Z =
        (R\otimes X)^{hG} \otimes Z \oto{\beta_*}
        (R\otimes X \otimes Z)^{hG} =
        T_R(X\otimes Z)
    \]
    is an isomorphism.
\end{lem}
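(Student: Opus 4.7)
The plan is to prove the two assertions in sequence, the first essentially by inspection and the second by invoking the standard preservation principle for ambidextrous limits.

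For colimit preservation, the functor $R\otimes(-)\colon \cC^{BG}\to\cC^{BG}$ preserves colimits since $\cC^{BG}$ is presentably symmetric monoidal and the tensor product preserves colimits in each variable. The functor $(-)^{hG}\colon \cC^{BG}\to \cC$ is a priori only a right adjoint. However, under the hypothesis that $BG$ is $\cC$-ambidextrous, the norm map exhibits $(-)^{hG}$ as canonically isomorphic to $(-)_{hG}$, i.e.\ to the colimit functor along $BG\to\pt$; in particular it preserves colimits. Composing, $T_R$ preserves colimits.

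For the $\cC$-linearity, the key input is \cite[Proposition 2.1.8]{AmbiHeight}: when $BG$ is $\cC$-ambidextrous, any colimit-preserving functor automatically preserves $BG$-shaped limits, in the strong sense that the Beck--Chevalley comparison map becomes an isomorphism. I would apply this to $Z\otimes(-)\colon \cC\to\cC$, which preserves all colimits, to conclude that for every $W\in\cC^{BG}$ the canonical map
\[
Z\otimes W^{hG} \longrightarrow (Z\otimes W)^{hG}
\]
is an isomorphism. Specializing to $W = R\otimes X$ then yields the desired isomorphism $T_R(X)\otimes Z \iso T_R(X\otimes Z)$.

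The only remaining point is to check that the map produced by this preservation argument agrees with the canonical lax symmetric monoidal structure map of $T_R$ (applied to the pair $(X,Z)$, where $Z$ is regarded as an object of $\cC^{BG}$ via the trivial action). This is the main, though mild, obstacle: one must verify that the lax structure of the right adjoint $(-)^{hG}$, which factors through the unit $Z\to Z^{hG}$ followed by the interchange map $W^{hG}\otimes Z^{hG}\to (W\otimes Z)^{hG}$, reproduces, under $\cC$-ambidexterity, exactly the Beck--Chevalley / projection map used above. This compatibility is formal and follows from a diagram chase with the adjunction unit/counit; once it is noted, the lemma is established.
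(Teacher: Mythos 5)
Your proof is correct and follows essentially the same strategy as the paper's: use the fact that $BG$ is $\cC$-ambidextrous to replace $(-)^{hG}$ with the colimit-preserving functor $(-)_{hG}$ (via the norm isomorphism), and then deduce $\cC$-linearity from the general fact that colimit-preserving functors automatically respect ambidextrous limits/colimits. The only cosmetic difference is in the choice of citation and on which side of the norm map one argues: you invoke [AmbiHeight, Prop.\ 2.1.8] to say that $Z\otimes(-)$ preserves $BG$-shaped limits (Beck--Chevalley for the limit functor), whereas the paper transfers everything to the colimit functor $X\mapsto(R\otimes X)_{hG}$ and cites [TeleAmbi, Prop.\ 3.3.1] for its $\cC$-linearity, together with the commutativity of the norm square to identify this with $T_R$. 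These are two phrasings of the same argument. Your final paragraph, flagging the need to check that the Beck--Chevalley isomorphism agrees with the lax symmetric monoidal structure map, is precisely the point the paper compresses into the phrase ``the commutativity of the norm square,'' so you have correctly identified the only non-automatic compatibility.
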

\begin{proof}
    The norm map
    \[
        \Nm \colon (R\otimes X)_{hG} \to 
        (R\otimes X)^{hG} = 
        T_R(X)
    \]
    is an isomorphism, and hence the functor $T_R$ is isomorphic to the colimit preserving functor $X\mapsto (R\otimes X)_{hG}$. Furthermore, for every $Z \in \cC$ consider the colimit preserving functor $(-) \otimes Z \colon \cC \to \cC$ and the associated commutative norm diagram (\cite[Theorem 3.2.3]{TeleAmbi}), which for every $X\in \cC^{BG}$ is of the form:
    \[\begin{tikzcd}
    	{(R\otimes X \otimes Z)_{hG}} && {(R\otimes X \otimes Z)^{hG}} \\
    	{(R\otimes X)_{hG} \otimes Z} && {(R\otimes X)^{hG} \otimes Z.}
    	\arrow["\Nm", from=2-1, to=2-3]
    	\arrow["{\beta_!}", from=2-1, to=1-1]
    	\arrow["\Nm", from=1-1, to=1-3]
    	\arrow["{\beta_*}", from=1-3, to=2-3]
    	\arrow["\sim"', from=1-1, to=1-3]
    	\arrow["\sim"', from=2-1, to=2-3]
    	\arrow["\wr"', from=2-1, to=1-1]
    \end{tikzcd}\]
    It follows that $\beta_*$ is an isomorphism as well.
\end{proof}

\begin{rem}\label{Restricted_Twisting_Functor}
    Using the free-forgetful adjunction $\Prl \adj \Mod_\cC(\Prl)$, the $\cC$-linearity of $T_R$ can be rephrased as follows. Let $\overline{T}_R$ be the restriction of $T_R$ along $\Spc^{BG}\to\cC^{BG}.$ The functor $\overline{T}_R \colon \Spc^{BG}\to \cC$ 
    is colimit preserving and
    \[
        T_R\colon
        \cC^{BG} \simeq
        \cC\otimes \Spc^{BG} \to
        \cC
    \] 
    is the corresponding $\cC$-linear functor.
\end{rem}  

The lax symmetric monoidal structure of $T_R$ can be used to characterize the Galois property for $R$.

\begin{prop}
\label{func_crit_galois}
    Let $\cC\in \calg(\Pr_\add)$ and let $G$ be a finite group such that $BG$ is $\cC$-ambidextrous. 
    A $G$-equivariant commutative ring  $R\in\calg({\cC}^{BG})$ is Galois, if and only if $T_R$ is (strong) symmetric monoidal.
\end{prop}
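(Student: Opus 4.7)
My plan is to identify when the lax symmetric monoidal structure on $T_R$ is strong, which amounts to checking that (i) the unit map $\one_\cC \to T_R(\one_{\cC^{BG}}) \simeq R^{hG}$ and (ii) the constraint maps $T_R(X) \otimes T_R(Y) \to T_R(X \otimes Y)$ are all isomorphisms. Condition (i) recovers the Galois condition (1) verbatim, so the content is in identifying (ii) with the Galois condition (2).

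The strategy is to evaluate the constraint at the ``free orbit'' $e_!\one \in \cC^{BG}$, where $e\colon \pt \to BG$ and $p\colon BG \to \pt$ are the canonical maps. The projection formula gives $R \otimes e_!\one \simeq e_!(e^*R)$, and ambidexterity yields $e_! \simeq e_*$, so
\[
    T_R(e_!\one) = p_*(e_!(e^*R)) \simeq p_*e_*(e^*R) \simeq (pe)_*(e^*R) \simeq e^*R.
\]
Applying the projection formula again to $e_!\one \otimes e_!\one \simeq \bigoplus_G e_!\one$ and invoking colimit-preservation of $T_R$ (from \Cref{Twisting_Functor_Linear}), one obtains $T_R(e_!\one \otimes e_!\one) \simeq \prod_G e^*R$. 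Tracing through the two stages of the lax structure of $T_R$---first the lax structure of $(-)^{hG}$, then the multiplication $R \otimes R \to R$---one verifies that under these identifications the constraint at $X = Y = e_!\one$ recovers the Galois comparison map $R \otimes R \to \prod_G R$.

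Granted this identification, the $(\Rightarrow)$ direction is immediate: if $T_R$ is strong monoidal, then the constraint at $(e_!\one, e_!\one)$ is an isomorphism, which is exactly Galois (2). For the $(\Leftarrow)$ direction, \Cref{Twisting_Functor_Linear} and \Cref{Restricted_Twisting_Functor} show that both sides of the constraint are $\cC$-bilinear and preserve colimits in each variable. Since $\Spc^{BG}$ is generated under colimits by the free orbit $G$ (every orbit $G/H$ realizes as the homotopy quotient $G_{hH}$), the $\infty$-category $\cC^{BG}$ is generated as a $\cC$-module under colimits by $e_!\one$. It follows that the constraint is an isomorphism on all of $\cC^{BG} \times \cC^{BG}$ as soon as it is one at $(e_!\one, e_!\one)$, and this in turn is Galois (2).

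The main obstacle I anticipate is the careful identification of the constraint at $(e_!\one, e_!\one)$ with the Galois comparison map $x \otimes y \mapsto (x \cdot \sigma y)_\sigma$. Tracking the two stages of the lax structure of $T_R$ through the ambidexterity isomorphisms and projection formula requires some diagram chasing, and this is the heart of the argument.
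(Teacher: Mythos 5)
Your proposal is correct and follows essentially the same route as the paper: identify the unit condition with Galois~(1), identify the lax structure map at $X=Y=e_!\one$ (which equals $e_*\one$ by ambidexterity) with the Galois comparison map $R\otimes R\to\prod_G R$, and for the converse use $\cC$-linearity and colimit-preservation of $T_R$ together with the fact that $e_!\one$ generates $\Spc^{BG}$ under colimits to propagate the isomorphism from the free orbit to all of $\cC^{BG}$. The paper phrases the reduction step via the restricted functor $\overline{T}_R\colon\Spc^{BG}\to\cC$, but this is the same argument in lightly different packaging.
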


\begin{proof}
    The Galois property of $R$ can be related to the properties of the functor $T_R$ as follows. First, for $BG\oto{q}\pt$, the unitality of $T_R$ amounts to the unit map 
    \[
        \one \to T_R(q^*\one) \simeq R^{hG}
    \] 
    being an isomorphism, i.e., it is equivalent to the first Galois condition for $R$. 
    Second, let $\pt \oto{e} BG$ denote the basepoint. We have, on the one hand,
    \[
        T_R(e_*\one) \otimes T_R(e_*\one) \simeq 
        (\prod_G R)^{hG} \otimes (\prod_G R)^{hG} \simeq 
        R \otimes R
    \]
    and on the other,
    \[
        T_R(e_*\one \otimes e_*\one) \simeq 
        (\prod_{G\times G} R)^{hG} \simeq 
        \prod_G R.
    \]
    The second isomorphism follows from the fact that the diagonal action of $G$ on $G\times G$ is free with quotient $G$. 
    Moreover, the canonical map, induced by $T_R$ being lax symmetric monoidal,
    \[
        R\otimes R \simeq 
        T_R(e_*\one) \otimes T_R(e_*\one) \to T_R(e_*\one\otimes e_*\one) \simeq
        \prod_G R
    \] 
    is exactly the map appearing the in second Galois condition for $R$. Hence, the second condition is equivalent to the structure map 
    $T_R(X)\otimes T_R(Y) \to T_R(X\otimes Y)$ 
    to be an isomorphism in the special case $X=Y=e_*\one\simeq e_!\one$. In particular, if $T_R$ is strong symmetric monoidal, then $R$ is Galois.
    
    Conversely, assume that $R$ is Galois. By the $\cC$-linearity of $T_R$ (\Cref{Twisting_Functor_Linear}), to show that $T_R$ is strong symmetric monoidal, it suffices to show that the restriction $\overline{T}_R\colon \Spc^{BG} \to \cC$ is strong symmetric monoidal (\Cref{Restricted_Twisting_Functor}). By the above, the structure map for the symmetric monoidality of $\overline{T}_R$ is an isomorphism in the case $X=Y=e_!(\pt)$. The local system $e_!(\pt)$ generates $\Spc^{BG}$ under colimits, and $\overline{T}_R$ is colimit preserving. It follows that $\overline{T}_R$ is strong symmetric monoidal and hence so is $T_R$.
\end{proof}

The strong symmetric monoidality of the twisting functor implies that it induces a ``descent'' map $\Pic(\cC^{BG}) \to \Pic(\cC)$. This allows one to construct Picard objects in $\cC$ by twisting Picard objects in $\cC^{BG}$ (see e.g. \cite{barthel2022constructing}).
Though we shall adopt a somewhat different perspective, our construction of Picard objects from Galois extensions in the next section fits into this paradigm.   
    
\section{Kummer Theory}
\label{kummer}

In this section, we study the relationship between abelian Galois extensions and the Picard spectrum. As in \S2, we shall work mainly in the setting of additive $\infty$-categories. 

In \S 3.1, we review the notion of (primitive) roots of unity (\Cref{Def_Roots_Of_Unity}) and prove a general form of the ``orthogonality of characters'' (\Cref{Characters}). In \S 3.2, we give a general form of the discrete Fourier transform (\Cref{DFT_Equiv}) and use the results of \S2.2 to characterize the Galois property of a commutative algebra in terms of its Fourier transform (\Cref{DFT_Galois}). In \S3.3, we use this characterization to establish the general form of Kummer theory (\Cref{Kummer_Theory}), and analyze the special case of a cyclic group (\Cref{Kummer_Cyclic}). We conclude with a certain variant for constructing Picard objects out of $\ZZ/2$-Galois extensions (\Cref{R_minus_one_Pic}), which will play a role in the chromatic world when $p=2$.

\subsection{Character Theory}
\label{Section_Roots_of_Unity}

\subsubsection{Roots of unity}

Following \cite[\S 1.3]{ando2018parametrized}, for every $\cC\in\calg(\Prl)$, there is a unique symmetric monoidal colimit preserving functor $\Spc \to \cC$, which induces an adjunction 
\[
    \calg(\Spc)\adj\calg(\cC).
\]
Furthermore, $\calg(\Spc)\simeq\CMon(\Spc)$ contains a coreflective full subcategory of group-like commutative monoids $\CMon^{\rm{gp}}(\Spc) \sseq \CMon(\Spc),$ which is equivalent to the full subcategory of connective spectra $\Sp^\cn \sseq \Sp$. Composing these adjunctions, we get an adjunction of the form
\[
    \one[-]\colon \Sp^\cn \adj 
    \calg(\cC):(-)^\times.
\]
We think of the left adjoint $\one[-]$ as the \textbf{group algebra} functor, and of the right adjoint $(-)^\times$ as the commutative \textbf{group of units}. 

Specializing to the case $\cC = \cat_\infty$ with its Cartesian symmetric monoidal structure,  $\calg(\cat_\infty)$ is the $\infty$-category of symmetric monoidal $\infty$-categories. In this case, the functor 
\[
    \one[-]\colon \Sp^\cn \to \calg(\cat_\infty)
\] 
is fully faithful with essential image those symmetric monoidal $\infty$-categories in which all morphisms are invertible and all objects are $\otimes$-invertible. 
We shall thus abuse notation and regard a connective spectrum also as a symmetric monoidal $\infty$-category via this fully faithful embedding. 

\begin{defn} \label{def:pic}
    For $\cC\in\calg(\cat_\infty)$, the \textbf{Picard spectrum} of $\cC$ is given by
    \[
        \pic(\cC) := \cC^\times \qin \Sp^\cn
    \]
    and 
    \[
        \Pic(\cC) := \pi_0(\pic(\cC)) \qin \Ab
    \] 
    is the \textbf{Picard group}.
\end{defn}

Less formally, the Picard spectrum of $\cC$ consists of tensor invertible objects of $\cC$ with the tensor product as a coherently commutative group operation. 
This is a (usually non-trivial) delooping of the connective spectrum $\one^\times_\cC$ in the following sense: 
\[
    (\Omega\pic(\cC))_{\ge0} \simeq 
    \one_\cC^\times
    \qin \Sp^\cn.
\]
The counit of the adjunction $\Sp^\cn \adj \calg(\cat_\infty)$ provides a symmetric monoidal functor $\pic(\cC) \to \cC$, which is the non-full embedding of the $\otimes$-invertible objects and the isomorphisms between them into $\cC$. 

\begin{rem}
    For a large symmetric monoidal $\infty$-category $\cC$, the spectrum $\pic(\cC)$ might a-priori be large as well. However, if $\cC$ is presentable, the spectrum $\pic(\cC)$ is (essentially) small (see e.g. \cite[Remark 2.1.4]{mathew2016picard}).
\end{rem}

Having introduced the space of units of a commutative algebra, we can now further consider \emph{roots of unity}.

\begin{defn}[Roots of Unity]\label{Def_Roots_Of_Unity}
    Let $\cC\in\calg(\Prl_\add)$ and let $R\in \calg(\cC)$. For every $m\in\NN$,
    \begin{enumerate}
        \item We define the space of \textbf{$m$-th roots of unity} in $R$ by 
        \[
            \mu_m(R) := 
            \Map_{\Sp^\cn}(C_m,R^\times),
        \]
        where $C_m$ is the cyclic group of order $m$.
        \item  We say that an $m$-th root of unity
            $C_m\oto{\omega} R^\times$ is \textbf{primitive}, if $R$ is $m$-divisible (i.e. $m$ is invertible in $\pi_0R$), and for every $d$ which strictly divides $m$, the only commutative $R$-algebra $S$ 
            for which there exists a dotted arrow rendering the diagram of connective spectra
            \[
                \xymatrix{
                    C_m\ar@{->>}[d]\ar[rr] &  & 
                    R^{\times}\ar[d] \\
                    C_d\ar@{-->}[rr] &  & 
                    S^{\times}
                }
            \]
            commutative, is $S=0$. 
            We denote by $\mu_m^\prim(R) \sseq \mu_m(R)$ the union of connected components of primitive $m$-th roots of unity.
    \end{enumerate}
    By convention, a (primitive) $m$-th root of unity of $\cC$ is a (primitive) $m$-th root of unity of $\one_\cC$.
\end{defn}

Employing the adjunction 
$\one[-] \dashv (-)^\times,$
the functor 
$\mu_m \colon \calg(\cC) \to \Spc$ 
is corepresented by the group algebra $\one[C_m].$ If we further assume that $\one$ is $m$-divisible, then for every divisor $d\mid m$, the map 
$\one[C_m] \to \one[C_d]$ 
can be identified with
$\one[C_m] \to \one[C_m][\varepsilon_d^{-1}],$ 
for the idempotent
\[
    \varepsilon_d = 
    \frac{d}{m}\sum_{a\in d\cdot C_m}a
    \qin \pi_0(\one[C_m]).
\]
\begin{defn}[Cyclotomic Extensions]
\label{Def_Cyclo_Ext}
    Let $\cC\in\calg(\Prl_\add)$, such that $m$ is invertible in $\cC$. We define the \textbf{$m$-th cyclotomic extension} to be 
    \[
        \one[\omega_m] :=
        \one[C_m][\varepsilon^{-1}],
    \]
    where
    \[
        \varepsilon = 
        \prod_{1\le d< m,\,d\mid m}(1-\varepsilon_d)
        \qin \pi_0(\one[C_m]).
    \]
    The commutative algebra $\one[\omega_m]$ carries a tautological (primitive) $m$-th root of unity denoted $\omega_m$.
\end{defn}

By the above discussion, the cyclotomic extension $\one[\omega_m]$ corepresents the functor of primitive $m$-th roots of unity 
$\mu^\prim_m \colon \calg(\cC) \to \Spc.$
Namely, for all $R\in\calg(\cC)$ we have a natural isomorphism
\[
    \mu_m^\prim(R) \simeq 
    \Map_{\calg(\cC)}(\one[\omega_m],R).
\]

\begin{example}\label{Roots_Universal}
    For the $\infty$-category  
    $\cC = \Sp^\cn$
    of connective spectra, the $m$-th cyclotomoic extension
    \[
        \Sph[\inv{m}] \to \Sph[\inv{m},\omega_m]
    \]
    is the unique \'{e}tale extension  which on $\pi_0$ induces the ordinary $m$-cyclotomic extension (see \cite[Theorem 7.5.0.6]{HA})
    \[
        \ZZ[\inv{m}] \to 
        \ZZ[\inv{m},\omega_m] :=
        \ZZ[\inv{m},t]/\Phi_m(t).
    \]
    Here, $\Phi_m(t)$ is the $m$-th cyclotomic polynomial. 
\end{example}

For $\cC \in \calg(\Prl_\add)$, we have a unique symmetric monoidal colimit preserving functor $\Sp^\cn \to \cC$, whose right adjoint (the ``underlying connective spectrum'') we denote by $X \mapsto \underline{X}$.
\begin{lem}\label{Roots_Underlying_Specturm}
    Let $\cC\in\calg(\Prl_\add)$ and let $R\in\calg(\cC)$. For every $m$, there is a canonical isomorphism 
    $\mu_m(R) \simeq \mu_m(\underline{R}),$
    which restricts to an isomorphism 
    $\mu^\prim_m(R) \simeq
    \mu^\prim_m(\underline{R}),$
    if $R$ is $m$-divisible. 
\end{lem}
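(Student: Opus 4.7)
My plan is to prove the two assertions separately, both by reduction to observations about the defining adjunction.

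For the general isomorphism $\mu_m(R) \simeq \mu_m(\underline{R})$, I would show that $R^\times \simeq \underline{R}^\times$ canonically as connective spectra, and then apply $\Map_{\Sp^\cn}(C_m,-)$. The strategy is to factor the adjunction $\one[-] \dashv (-)^\times$ of the paper through connective ring spectra: the essentially unique symmetric monoidal colimit preserving functor $\Sp^\cn \to \cC$ induces an adjunction $F \colon \calg(\Sp^\cn) \adj \calg(\cC) \colon \underline{(-)}$, and $F \circ \Sph[-] \simeq \one[-]$ by construction (both are left adjoint to $R \mapsto \underline{R}^\times$ applied post facto). Uniqueness of adjoints identifies the right adjoint $(-)^\times$ on $\calg(\cC)$ with the composite of $\underline{(-)}$ followed by the units functor on $\calg(\Sp^\cn)$, giving a canonical isomorphism $R^\times \simeq \underline{R}^\times$ in $\Sp^\cn$. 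The claimed isomorphism of roots of unity spaces follows.

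For the refinement to primitive roots under the $m$-divisibility hypothesis, I plan to recast primitivity as a condition on an idempotent element of $\pi_0 R$, which is manifestly invariant under passing to $\underline{R}$. First, the $m$-divisibility hypotheses for $R$ and $\underline{R}$ coincide since $\pi_0 R = \pi_0 \underline{R}$. For each proper divisor $d \mid m$, the universal $R$-algebra $S$ making the commuting square from \Cref{Def_Roots_Of_Unity} factor is the pushout $R \otimes_{\one[C_m]} \one[C_d]$ in $\calg(\Mod_R(\cC))$. Working in $\Mod_R(\cC)$, whose unit $R$ is $m$-divisible, the idempotent identification recalled preceding \Cref{Def_Cyclo_Ext} gives $R[C_d] \simeq R[C_m][\varepsilon_d^{-1}]$, so the pushout becomes $R[\omega(\varepsilon_d)^{-1}]$; as $\omega(\varepsilon_d) \in \pi_0 R$ is an idempotent, this localization vanishes iff $\omega(\varepsilon_d) = 0$. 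The identical computation in $\Sp^\cn$ for $\omega \colon C_m \to \underline{R}^\times$ produces the same condition on the same element of $\pi_0 R = \pi_0 \underline{R}$, so the primitivity conditions agree and the isomorphism restricts as claimed.

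I do not expect a substantial obstacle. The only point requiring attention is the justification for applying the idempotent decomposition (stated in the paper when the ambient unit is $m$-divisible) to the situation where only $R$ is $m$-divisible; this is handled by base-changing to $\Mod_R(\cC)$ and using that the pushout $R \otimes_{\one[C_m]} \one[C_d]$ in $\calg(\cC)$ is computed equivalently in $\calg(\Mod_R(\cC))$ after base-changing $R$ along $\omega$.
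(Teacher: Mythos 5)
Your proof is correct. The first part is essentially identical to the paper's: both rely on the adjunction $\calg(\Sp^\cn) \adj \calg(\cC)$ being compatible with the group algebra/units adjunctions, whether phrased via corepresentability by $\one[C_m]$ (paper) or via the identification $R^\times \simeq \underline{R}^\times$ (you) --- these are the same observation packaged differently.

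For the second part your route diverges slightly. The paper simply reruns the adjunction argument after replacing $\cC$ by $\Mod_R(\cC)$, now with the corepresenting object $\one[\omega_m]$ arising as the image of $\Sph[\inv{m},\omega_m]$ under the free functor. You instead base-change to $\Mod_R(\cC)$ and then unpack the primitivity condition: you show the universal vanishing witness for the divisor $d$ is the localization $R[\omega(\varepsilon_d)^{-1}]$, hence primitivity is precisely the vanishing of the finitely many idempotents $\omega(\varepsilon_d)\in \pi_0 R = \pi_0\underline{R}$, a condition manifestly invariant under $R \mapsto \underline{R}$. This is a more explicit, $\pi_0$-level argument; the paper's version is terser but implicitly relies on the same compatibility (symmetric monoidal colimit-preserving functors commute with localizing at idempotents). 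Both are valid, and neither gains in generality over the other --- yours has the modest advantage of making transparent exactly what is being tested and why passing to the underlying spectrum doesn't affect it.
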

\begin{proof}
    The first claim follows from the adjunction $\calg(\Sp^\cn) \adj \calg(\cC)$ as follows:
    \[
        \mu_m(R) =
        \Map_{\calg(\cC)}(\one[C_m],R) \simeq
        \Map_{\calg(\Sp^\cn)}(\Sph[C_m],\underline{R}) =
        \mu_m(\underline{R}).
    \]
    Assuming $R$ is $m$-divisible, we can without loss of generality assume that $\one$ is also $m$-divisible, by replacing $\cC$ with $\Mod(R)$. 
    Thus, the second claim follows similarly:
    \[
        \mu^\prim_m(R) =
        \Map_{\calg(\cC)}(\one[\omega_m],R) \simeq
        \Map_{\calg(\Sp^\cn)}(\Sph[\inv{m},\omega_m],\underline{R}) =
        \mu^\prim_m(\underline{R}).
    \]
\end{proof}

We deduce that \Cref{Roots_Universal} is universal in the  sense that a primitive $m$-th root of unity in $\cC$ is the same as a symmetric monoidal colimit preserving functor from 
$\Mod_{\Sph[\inv{m},\omega_m]}(\Sp^\cn)$
to $\cC$.
\begin{prop}
\label{Univ_Cycl_SM}
    Let $\cC\in\calg(\Prl_\add)$. For every $m$, we have 
    \[
        \mu_m^\prim(\cC) \simeq
        \Map_{\calg(\Prl)}(\Mod_{\Sph[\inv{m},\omega_m]}(\Sp^\cn),\cC)
        \qin \Spc.
    \]
\end{prop}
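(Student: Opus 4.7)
The plan is to compose three natural equivalences, reducing the statement to the corepresentability of primitive roots of unity in $\calg(\Sp^\cn)$ combined with the universal property of module $\infty$-categories.

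First, using the convention $\mu_m^\prim(\cC) := \mu_m^\prim(\one_\cC)$ together with \Cref{Roots_Underlying_Specturm} applied to $R = \one_\cC$, I obtain a natural equivalence
\[
    \mu_m^\prim(\cC) \simeq \mu_m^\prim(\underline{\one_\cC}).
\]
(If $\one_\cC$ is not $m$-divisible both sides are empty and the final claim holds vacuously, since any symmetric monoidal colimit-preserving functor out of $\Mod_{\Sph[\inv{m},\omega_m]}(\Sp^\cn)$ carries the unit to the unit and would force $m$ to be invertible in $\one_\cC$.) Second, by the corepresentability of $\mu_m^\prim$ on $\calg(\Sp^\cn)$ by the cyclotomic algebra $\Sph[\inv{m},\omega_m]$, which is the content of the idempotent-splitting construction preceding \Cref{Def_Cyclo_Ext} applied to $\Sp^\cn$ and noting that primitivity forces factorization through $\Sph[\inv{m}]$ (cf.\ \Cref{Roots_Universal}):
\[
    \mu_m^\prim(\underline{\one_\cC}) \simeq \Map_{\calg(\Sp^\cn)}(\Sph[\inv{m},\omega_m], \underline{\one_\cC}).
\]

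The main step is to identify the right-hand side with the target of the proposition via the universal property of module $\infty$-categories (HA Theorem 4.8.5.16) applied to $\Sp^\cn \in \calg(\Prl)$. This yields, for any $A \in \calg(\Sp^\cn)$ and any $\cD \in \calg(\Prl)_{\Sp^\cn/}$,
\[
    \Map_{\calg(\Prl)_{\Sp^\cn/}}(\Mod_A(\Sp^\cn), \cD) \simeq \Map_{\calg(\Sp^\cn)}(A, \underline{\one_\cD}),
\]
using that the right adjoint to the structure functor $\Sp^\cn \to \cD$ is precisely the underlying connective spectrum. Since $\Sp^\cn$ is the initial object of $\calg(\Prl_\add)$ (it is the mode classifying additivity, see \cite[\S 5.2]{AmbiHeight}), the mapping space $\Map_{\calg(\Prl)}(\Sp^\cn, \cC)$ is contractible for every $\cC \in \calg(\Prl_\add)$, and the standard fiber sequence comparing a slice mapping space to the ambient one identifies the left-hand side above with $\Map_{\calg(\Prl)}(\Mod_A(\Sp^\cn), \cC)$. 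Taking $A = \Sph[\inv{m},\omega_m]$ and chaining the three equivalences yields the claim. The main obstacle is invoking these two categorical inputs; both are standard, the first from Higher Algebra and the second from the mode-theoretic viewpoint outlined in the conventions.
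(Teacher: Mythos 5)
Your proof is correct and takes essentially the same route as the paper's. Both arguments chain (i) the reduction $\mu_m^\prim(\one_\cC) \simeq \mu_m^\prim(\underline{\one}_\cC)$ from \Cref{Roots_Underlying_Specturm}, (ii) corepresentability of $\mu_m^\prim$ on $\calg(\Sp^\cn)$ by $\Sph[\inv{m},\omega_m]$, and (iii) the $\Mod_{(-)}(\Sp^\cn) \dashv \underline{\one}_{(-)}$ adjunction coming from \cite[4.8.5]{HA} together with the idempotence of the mode $\Sp^\cn$; the only cosmetic difference is that you pass explicitly through $\calg(\Prl)_{\Sp^\cn/}$ and use initiality of $\Sp^\cn$ in $\calg(\Prl_\add)$ to drop the slice, where the paper instead invokes the mode equivalence $\Prl_\add \simeq \Mod_{\Sp^\cn}(\Prl)$ to phrase the adjunction directly over $\calg(\Prl_\add)$ and leaves the identification $\Map_{\calg(\Prl_\add)} \simeq \Map_{\calg(\Prl)}$ implicit.
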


\begin{proof}
By \cite[Corollary 4.8]{GepUniv}  we have an equivalence $\Prl_\add \simeq \Mod_{\Sp^\cn}(\Prl)$.
    Thus, by \cite[Theorems 4.8.5.11, 4.8.5.16 and Corollary 4.8.5.21]{HA}, we have an adjunction
    \[
        \Mod_{(-)}(\Sp^\cn)\colon 
        \calg(\Sp^\cn) \adj \calg(\Prl_\add)
        \colon \underline{\one}_{(-)}.
    \]
    Applying this to $\Sph[\inv{m},\omega_m] \in \calg(\Sp^\cn)$ and $\cC \in \calg(\Prl_\add)$, we get by \Cref{Roots_Underlying_Specturm},
    \[
        \mu_m^\prim(\one_\cC) \simeq 
        \mu_m^\prim(\underline{\one}_\cC) \simeq
        \Map_{\calg(\Sp^\cn)}(\Sph[\inv{m},\omega_m],\underline{\one}_\cC) \simeq
        \Map_{\calg(\Prl_\add)}(\Mod_{\Sph[\inv{m},\omega_m]}(\Sp^\cn),\cC).
    \]
\end{proof}

We also deduce that for an $m$-divisible commutative algebra $R$, the space of (primitive) $m$-th roots of unity is discrete and depends only on $\pi_0(R)$.
\begin{prop}\label{Roots_Discrete}
    Let $\cC\in\calg(\Prl_\add)$ and let $R\in\calg(\cC)$ which is $m$-divisible.
    We have a canonical bijection $\mu_m(R) \simeq \mu_m(\pi_0R)$, which restricts to a bijection $\mu^\prim_m(R) \simeq \mu^\prim_m(\pi_0R).$
\end{prop}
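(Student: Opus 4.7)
The plan is to reduce to the setting $\cC=\Sp^\cn$ and then invoke Lurie's classification of étale $E_\infty$-algebras. First, by \Cref{Roots_Underlying_Specturm}, there are canonical bijections $\mu_m(R)\simeq\mu_m(\underline{R})$ and, under the $m$-divisibility hypothesis, $\mu_m^\prim(R)\simeq\mu_m^\prim(\underline{R})$. Since $\pi_0R=\pi_0\underline{R}$, it suffices to treat $R\in\calg(\Sp^\cn)$ which is $m$-divisible. In that case the unit $\Sph\to R$ factors uniquely through $\Sph[\inv{m}]$, so $R$ is canonically an $\Sph[\inv{m}]$-algebra, and we may as well work in $\Mod_{\Sph[\inv{m}]}(\Sp^\cn)$, where $m$ is invertible in the unit.

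In this new ambient $\infty$-category, $\mu_m$ is corepresented by $\Sph[\inv{m}][C_m]$ and $\mu_m^\prim$ by $\Sph[\inv{m},\omega_m]$ (the latter by construction in \Cref{Def_Cyclo_Ext}). I claim that both are étale over $\Sph[\inv{m}]$. For $\Sph[\inv{m},\omega_m]$ this is exactly the content of \Cref{Roots_Universal}. For $\Sph[\inv{m}][C_m]$ it follows by iterating the idempotent decomposition of \Cref{Def_Cyclo_Ext} over all divisors of $m$, producing the familiar product decomposition
\[
    \Sph[\inv{m}][C_m]\simeq\prod_{d\mid m}\Sph[\inv{m},\omega_d],
\]
which is étale as a finite product of étale algebras; alternatively, $\Sph[\inv{m}][C_m]$ is the unique étale lift of its classical $\pi_0$ $\ZZ[\inv{m}][C_m]$, which is étale over $\ZZ[\inv{m}]$ by Maschke.

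Now apply \cite[Theorem 7.5.0.6]{HA}: for an étale $A$-algebra $A'$ and any connective $A$-algebra $B$, the mapping space $\Map_A(A',B)$ is discrete and canonically equal to $\Hom_{\pi_0A}(\pi_0A',\pi_0B)$. Applied with $A=\Sph[\inv{m}]$, $B=R$, and $A'=\Sph[\inv{m}][C_m]$ or $\Sph[\inv{m},\omega_m]$ respectively, this yields
\[
    \mu_m(R)\simeq\Hom_{\ZZ[\inv{m}]}(\ZZ[\inv{m}][C_m],\pi_0R)=\mu_m(\pi_0R),
\]
and analogously $\mu_m^\prim(R)\simeq\mu_m^\prim(\pi_0R)$. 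Both bijections are natural in $R$ and agree with the inclusion $\mu_m^\prim\subseteq\mu_m$, giving the asserted compatibility.

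No serious obstacle arises: the only nontrivial input beyond the reduction of \Cref{Roots_Underlying_Specturm} is the étaleness of the group/cyclotomic algebras over $\Sph[\inv{m}]$, and the cyclotomic case is already granted to us by \Cref{Roots_Universal} while the group algebra case follows from the idempotent decomposition already set up in \Cref{Def_Cyclo_Ext}. The rest is a formal invocation of the étale classification of $E_\infty$-algebras.
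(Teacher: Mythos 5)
Your proof is correct, but takes a genuinely different route from the paper. The paper's own argument, after the same reduction to $\cC=\Sp^\cn$ via \Cref{Roots_Underlying_Specturm}, proceeds by a direct Postnikov-tower analysis: since $\Omega^\infty(R^\times)\subseteq\Omega^\infty(R)$ is an inclusion of connected components, $\pi_nR^\times\simeq\pi_nR$ for $n\geq1$, so $R^\times_{\geq1}$ is $m$-divisible and $\Map_{\Sp^\cn}(C_m,R^\times_{\geq1})=0$; thus $\Map(C_m,R^\times)\iso\Map(C_m,\pi_0R^\times)$, and the primitivity condition is seen on $\pi_0$ because idempotent-invertibility is. Your approach instead passes to $\Mod_{\Sph[\inv{m}]}(\Sp^\cn)$ and invokes the \'etaleness of $\Sph[\inv{m}][C_m]$ and $\Sph[\inv{m},\omega_m]$ together with Lurie's classification of \'etale $E_\infty$-algebras. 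Both work; the paper's is more self-contained and elementary, while yours is more structural and makes the slogan ``roots of unity are an \'etale phenomenon'' explicit, which would also apply verbatim to nonabelian variants.

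Two small cautions. First, the citation \cite[Theorem 7.5.0.6]{HA} is, strictly speaking, the equivalence $\calg_A^{\mathrm{\acute{e}t}}\simeq\calg_{\pi_0A}^{\mathrm{\acute{e}t}}$; the stronger statement you actually need --- that $\Map_{\calg_A}(A',B)$ is discrete and computed on $\pi_0$ for $A'$ \'etale and $B$ \emph{arbitrary} --- lives nearby (it follows from \'etale morphisms having vanishing relative cotangent complex and being almost of finite presentation), and is worth citing precisely. The paper cites 7.5.0.6 for uniqueness of the \'etale lift in \Cref{Roots_Universal}, which is a weaker demand. Second, the \'etaleness of $\Sph[\inv{m}][C_m]$ over $\Sph[\inv{m}]$ does require a small verification that $\pi_*(\Sph[\inv{m}][C_m])$ is flat over $\pi_*(\Sph[\inv{m}])$ with $\pi_0$ \'etale; this holds because $\Sph[C_m]$ is a finite free $\Sph$-module, but you should say so rather than only gesture at ``Maschke'' (which gives semisimplicity of $\ZZ[\inv{m}][C_m]$ over a field, not directly the spectral flatness statement). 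Neither issue is a real gap, just points where the argument should be tightened.
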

    
\begin{proof}
    By \Cref{Roots_Underlying_Specturm}, it suffices to consider the universal case $\cC = \Sp^\cn$. In this case, we have that
    \[
        \Omega^\infty(R^\times) \sseq 
        \Omega^\infty(R)
        \qin \Spc
    \]
    is an inclusion of connected components. Thus, $\pi_nR \simeq \pi_n R^\times$ for all $n\ge1$. Namely, the fiber $R^\times_{\ge1}$ (in $\Sp$) of the truncation map $R^\times \to \pi_0R^\times$ has the same homotopy groups as the spectrum $R_{\ge1}$. 
    We therefore deduce that $R^\times_{\ge1}$ is $m$-divisible and hence  
    $\Map_{\Sp^\cn}(C_m,R_{\ge1})=0.$
    It follows that 
    \[
        \mu_m(R) =
        \Map_{\Sp^\cn}(C_m,R^\times) \iso \Map_{\Sp^\cn}(C_m,\pi_0R^\times) =
        \mu_m(\pi_0R).
    \]
    Since the invertibility of an idempotent is a condition on $\pi_0$, under this bijection, primitive roots correspond to primitive roots. 
\end{proof}

Of specific importance for us, is the following special case:
\begin{cor}\label{p_Complete_Roots}
    Let $\cC\in\calg(\Prl_\add)$ and let $R\in\calg(\cC)$ be $p$-complete\footnote{That is, $\hom_\cC(X,R)\in \Sp^\cn$ is $p$-complete for all $X\in\cC$.} for some prime $p$. For every $m\mid (p-1)$, the commutative algebra $R$ admits a primitive $m$-th root of unity.
\end{cor}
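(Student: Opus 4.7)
First I would check that $R$ is $m$-divisible, in order to apply \Cref{Roots_Discrete}. Since $R$ is $p$-complete, so is its underlying connective spectrum $\underline{R}$; the unit map then factors through the $p$-complete sphere and induces a ring map $\ZZ_p = \pi_0 \Sph^\wedge_p \to \pi_0 R$, making $\pi_0 R$ a $\ZZ_p$-algebra. Because $m$ divides $p-1$, $m$ is coprime to $p$ and hence invertible in $\ZZ_p$, and a fortiori in $\pi_0 R$. This gives $m$-divisibility of $R$, and \Cref{Roots_Discrete} yields a bijection
\[
    \mu_m^\prim(R) \simeq \mu_m^\prim(\pi_0 R).
\]
It therefore suffices to produce a primitive $m$-th root of unity in the classical commutative ring $\pi_0 R$.

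Next I would produce such a root by Hensel lifting from $\FF_p$. Since $m \mid |\FF_p^\times| = p-1$ and $\FF_p^\times$ is cyclic, it contains a primitive $m$-th root of unity $\bar\omega$, which is a simple root of $X^m - 1$ modulo $p$. Its Teichm\"uller lift is an element $\omega \in \ZZ_p^\times$ with $\omega^m = 1$ and $\omega \equiv \bar\omega \pmod p$. For every proper divisor $d \mid m$ we have $\bar\omega^d \neq 1$ in $\FF_p^\times$, so $\omega^d - 1 \in \ZZ_p^\times$, and the same holds for its image in $\pi_0 R$.

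Finally, I would check that this classical root is primitive in the sense of \Cref{Def_Roots_Of_Unity}. Unwinding the definition for the discrete ring $A = \pi_0 R$, a $\pi_0 R$-algebra $S$ admitting a factorization $C_m \to C_d \to S^\times$ compatible with $\omega \colon C_m \to A^\times$ is exactly an $A/(\omega^d - 1)$-algebra, so the universal such $S$ vanishes if and only if $\omega^d - 1 \in A^\times$, which is what we just verified. Hence $\omega$ defines the desired primitive $m$-th root of unity in $R$. The only point requiring care is this translation between the universal formulation of primitivity and the classical ``$\omega^d - 1$ is a unit'' condition; beyond that, the argument is a direct combination of \Cref{Roots_Discrete} with standard $p$-adic algebra, and I do not anticipate further obstacles.
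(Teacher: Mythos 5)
Your argument is correct and follows exactly the same route as the paper's proof: reduce to $\pi_0 R$ via \Cref{Roots_Discrete}, observe that $\pi_0 R$ is a $\ZZ_p$-algebra, and use Teichm\"uller lifts to produce a primitive $m$-th root of unity. You simply spell out the details the paper leaves implicit, in particular verifying that the unit condition $\omega^d-1\in\ZZ_p^\times$ for proper divisors $d\mid m$ persists after base change to $\pi_0 R$, which is the reason primitivity is inherited.
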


\begin{proof}
    First of all, since $R$ is $p$-complete, it is $m$-divisible. Now, by \Cref{Roots_Discrete}, it suffices to show that $\pi_0R$ admits a primitive $m$-th root of unity. This follows from the fact that $\pi_0R$ is a $\ZZ_p$-algebra, and $\ZZ_p$ admits primitive $m$-th roots of unity given by Teichm{\"u}ller lifts.
\end{proof}

\subsubsection{Characters}
As in ordinary commutative algebra, primitive roots of unity in $\cC$ allow us to set up a \emph{character theory} for $\cC$.
Let $A$ be a finite $m$-torsion abelian group with  Pontryagin dual denoted by
\[
    A^* := \hom(A,C_m) = \hom(A,\QQ/\ZZ).
\]
Given $\cC \in \calg(\Prl_\add)$ with a choice of a primitive $m$-th root of unity 
\(
    \omega\colon C_m \to \one^\times
\)
(so that in particular $m$ is invertible in $\cC$),
the canonical pairing of $A$ with $A^*$ induces  a map of spectra
\[
    A^* \otimes A \to 
    C_m \oto{\,\,\omega \,\,}
    \one^\times \to
    \Omega\pic(\cC).
\]
This map corresponds to a map of connective spectra
\[
    A^* \to 
    \hom(A,\Omega \pic(\cC)) \simeq 
    \hom(\Sigma A,\pic(\cC)).
\]
\begin{defn}
\label{def_aigen_spaces}
    Let $\cC\in \calg(\Pr_\add)$ with a primitive $m$-th root of unity $\omega$, and let $A$ be a finite $m$-torsion abelian group. We define a map of connective spectra
    \[
        \one(-) \colon 
        A^* \to 
        \pic(\cC^{BA})
    \]
    to be the composition
    \[
        A^* \to 
        \hom(\Sigma A,\pic(\cC)) \to
        \pic(\cC)^{BA} \simeq
        \pic(\cC^{BA}),
    \]
    where the first map is the one given above and the second is induced from the counit $\Sph[BA]\to \Sigma A$ by pre-composition. Even though the construction of $\one(-)$ depends on $\omega$, we shall keep this dependence implicit.   
\end{defn}
Intuitively, for every character $A \oto{\varphi} C_m$, the object $\one(\varphi)\in \cC^{BA}$ is the unit $\one \in \cC$, on which the group $A$ acts through the composition of the character $\varphi$ with $C_m\oto{\omega}\one^\times$. The fact that $\one(-)$ is a map of connective spectra encodes in a coherent way the $A$-equivariant identities
\[
    \one(0) \simeq \one \quad , \quad 
    \one(\varphi + \psi) \simeq 
    \one(\varphi) \otimes \one(\psi).   
\]
For $X\in\cC^{BA}$, we define its twist by a character $\varphi\in A^*$, to be
\[
    X(\varphi) := 
    X\otimes \one(\varphi)
    \qin \cC^{BA}.
\]
We shall implicitly treat an object $X\in\cC$ as an object of $\cC^{BA}$ with a trivial action. 
The main fact we shall need about this construction is the following analogue of the ``orthogonality of characters'' from classical algebra:
\begin{prop}
\label{Characters}
Let $\cC\in \calg(\Pr_\add)$ with a primitive $m$-th root of unity and let $A$ be a finite $m$-torsion abelian group. 
\begin{enumerate}
    \item For every $X\in \cC$ and $\varphi \in A^*$, we have  
    \[
        X(\varphi)^{hA}\simeq 
        \begin{cases} 
            X & \varphi = 0 \\ 
            0 & \text{ else}  
        \end{cases}
        \qin \cC.
    \]
    \item For every $X\in \cC$, we have
    \[
        \prod_{a \in A} X \simeq 
        \bigoplus_{\varphi\in A^*} X(\varphi)
        \qin \cC^{BA},
    \]
    where on the left side we have the induced representation (i.e. $A$ acts by permuting the factors).
\end{enumerate}
\end{prop}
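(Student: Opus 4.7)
The plan is to first prove (2) by constructing an explicit Fourier-type isomorphism, then deduce (1) by a Yoneda-plus-averaging argument; in both parts, I reduce to the case $X = \one$ via the projection formula and the $\cC$-linearity of $(-)^{hA}$. The crucial input throughout is the primitivity of $\omega$, which, unwinding the definition, forces $\omega^k - 1 \in \pi_0\one$ to be invertible for every $k$ with $m \nmid k$: otherwise $\omega^k - 1$ would vanish in some nonzero quotient $\one$-algebra $S$, letting $\omega|_S$ factor through $C_{\gcd(k,m)}$ with $\gcd(k,m) < m$, contradicting primitivity.

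For (2), the projection formula along $e\colon \pt\to BA$ gives $\prod_A X\simeq e_*X \simeq X \otimes e_*\one$, while $\bigoplus_\varphi X(\varphi) \simeq X\otimes\bigoplus_\varphi\one(\varphi)$; this reduces the claim to $X = \one$. I would construct an equivariant Fourier map $F\colon e_*\one \to \bigoplus_\varphi \one(\varphi)$ via the adjunction $\Map_{\cC^{BA}}(e_*\one, Y)\simeq \Map_\cC(\one, e^*Y)$, choosing the ``all-ones'' element of $\Map_\cC(\one, \bigoplus_\varphi\one)$; equivariance then forces the underlying matrix entries to be $F_{\varphi,a} = \varphi(a)\in\pi_0\one^\times$. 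Dually, construct a candidate inverse $G$ with entries $G_{a,\varphi} = |A|^{-1}\varphi(-a)$. Verifying $GF = \mathrm{id}$ and $FG = \mathrm{id}$ reduces to the character orthogonality identities $\sum_a \varphi(a) = |A|$ if $\varphi = 0$ and $0$ otherwise (and the dual identity over $\varphi$ for fixed $a$); vanishing follows from the standard averaging trick $(\varphi(a_0)-1)\sum_a\varphi(a) = 0$ combined with the invertibility of $\varphi(a_0)-1$ established above.

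For (1), first reduce to $X = \one$ using the $\cC$-linearity of $(-)^{hA}$ (\Cref{Twisting_Functor_Linear} with $R = \one$), which gives $X(\varphi)^{hA}\simeq X\otimes \one(\varphi)^{hA}$. For $\varphi\neq 0$, I would compute $\one(\varphi)^{hA}$ via Yoneda: for any $Z\in\cC$, $\Map_\cC(Z, \one(\varphi)^{hA})\simeq \Map_{\cC^{BA}}(q^*Z, \one(\varphi))$, and an element of the latter at level $\pi_n$ is a morphism $\alpha\colon Z\otimes S^n\to\one$ satisfying the equivariance constraint $\varphi(a)\cdot\alpha = \alpha$ for every $a\in A$; picking $a$ with $\varphi(a)\neq 1$ and using the invertibility above forces $\alpha = 0$, so the connective mapping spectrum vanishes and hence $\one(\varphi)^{hA} = 0$. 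For $\varphi = 0$, apply $(-)^{hA}$ to (2) at $X = \one$: the LHS becomes $(q\circ e)_*\one = \one$, the RHS decomposes as $\bigoplus_\varphi \one(\varphi)^{hA}$ since $(-)^{hA}$ preserves finite direct sums in the ambidextrous setting, and the vanishing of the $\varphi\neq 0$ summands identifies $\one(0)^{hA}\simeq \one$.

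The main technical obstacle is the primitivity-to-invertibility step sketched at the outset; once that is in hand, the rest is essentially formal manipulation around the Fourier isomorphism, equivariance, and the $\cC$-linearity of $(-)^{hA}$.
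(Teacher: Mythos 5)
Your proof is correct, and for part (1) it takes a genuinely different route than the paper. For part (2) the two arguments coincide in substance: both construct the equivariant Fourier map between $\prod_{A}\one$ and $\bigoplus_{\varphi}\one(\varphi)$ via the free-forgetful (induction/restriction) adjunction along $e\colon\pt\to BA$, and reduce invertibility to that of the DFT matrix $(\omega^{\varphi(a)})$; the paper shortcuts the orthogonality check by observing the matrix is already invertible over the universal ring $\ZZ[\inv{m},\omega_m]$, whereas you verify orthogonality directly via the averaging trick. For part (1), the paper uses \Cref{Univ_Cycl_SM} to produce a symmetric monoidal colimit-preserving functor $\Mod_{\Sph[\inv{m},\omega_m]}(\Sp^\cn)\to\cC$ preserving $BA$-fixed points, reducing the computation of $\one(\varphi)^{hA}$ to a $\pi_*$-level check in the universal case; you instead argue intrinsically in $\cC$ by showing the averaging idempotent kills $\one(\varphi)$ for $\varphi\neq 0$, and then deduce the $\varphi=0$ case from (2) by applying $(-)^{hA}$. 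Both are legitimate; the paper's reduction is slicker, while yours is more self-contained and makes the role of character orthogonality explicit.

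Two small points to tighten. First, your key lemma --- primitivity forces $\omega^{k}-1\in\pi_0\one$ to be invertible for $m\nmid k$ --- is correct, but the argument ``otherwise $\omega^k-1$ vanishes in a nonzero quotient $\one$-algebra $S$'' is not quite available in a general additive $\infty$-category, where $\one/(\omega^k-1)$ need not carry a commutative algebra structure. The clean version is to use the idempotent $\varepsilon_d$ (for $d=\gcd(k,m)$) from \Cref{Def_Cyclo_Ext}: $\varepsilon_d$ is the kernel projection of $\omega^d-1$, so $\omega^d-1$ is invertible iff $\varepsilon_d=0$; and if $\varepsilon_d\neq 0$, then $S=\one[\varepsilon_d^{-1}]$ is a genuine nonzero localization over which $\omega$ factors through $C_d$, contradicting primitivity (and $\omega^k-1$ generates the same ideal as $\omega^{\gcd(k,m)}-1$). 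Alternatively, just cite \Cref{Univ_Cycl_SM} and the classical computation in $\ZZ[\inv{m},\omega_m]$. Second, your Yoneda step phrases homotopy fixed points as ``morphisms satisfying the equivariance constraint,'' which conflates structure with property; the rigorous version is that, since $|A|$ is invertible, $BA$ is $\cC$-ambidextrous and $\one(\varphi)^{hA}$ is computed by the averaging idempotent $\frac{1}{|A|}\sum_{a}\omega^{\varphi(a)}$, which vanishes by the very orthogonality identity you already established. Neither issue affects the correctness of the overall plan.
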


\begin{proof}
    (1) Since $BA$ is $\cC$-ambidextrous, we have (by \cite[Proposition 3.3.1]{TeleAmbi})
    \[
        X(\varphi)^{hA} =
        (X\otimes \one(\varphi))^{hA} \simeq
        X\otimes (\one(\varphi)^{hA}).
    \]
    Thus, it suffices to show the claim for $X = \one$. By \Cref{Univ_Cycl_SM}, we have a colimit preserving symmetric monoidal functor 
    $F\colon\Mod_{\Sph[\inv{m},\omega_m]}(\Sp^\cn) \to \cC,$
    which in particular takes the unit $\Sph[\inv{m},\omega_m]$ to the unit $\one$.
    Since $A$ is $m$-torsion, by \cite[Proposition 2.1.8]{AmbiHeight}, $F$ also preserves $A$-homotopy fixed points, thus it suffices to prove the claim for $\Sph[\inv{m},\omega_m]$. Since $\pi_*$ preserves $A$-homotopy fixed points for $m$-divisible spectra, the result for $\Sph[\inv{m},\omega_m]$ follows from the analogous fact for $\pi_*(\Sph[\inv{m},\omega_m])=\pi_*(\Sph)[\inv{m},\omega_m].$ 
    Finally, in the case $\varphi=0$ the action of $A$ on $\pi_*(\Sph)[\inv{m},\omega_m]$ is trivial so the statement is clear. For $\varphi \ne 0$, there is $a\in A$ such that \[\varphi(a) \ne 0 \qin \ZZ/m\ZZ \subset \QQ/\ZZ,\] and since $\omega_m$ is a primitive $m$-th root of unity, $\omega_m^{\varphi(a)}-1$ is invertible in $\pi_0(\Sph)[\inv{m},\omega_m] \simeq \ZZ[\inv{m},\omega_m]$. Thus,   $\omega_m^{\varphi(a)}-1$ acts invertibly on  $\pi_*(\Sph)[\inv{m},\omega_m]$, and therefore $(\pi_*(\Sph)[\inv{m},\omega_m](\varphi))^{hA} \simeq 0$. 

    (2) It again suffices to consider the case $X=\one$. Under the free-forgetful adjunction
    $\cC \adj \cC^{BA}$, the non-equivariant map $\one(\varphi)\iso \one$ corresponds to the map 
    \[
        \iota_\varphi \colon 
        \one(\varphi) \to 
        \prod\limits_{a\in A} \one,
    \]
    which on the $a$-factor is given by multiplication with $\omega_m^{\varphi(a)}$. Hence, the induced map
    \[
        \iota \colon
        \bigoplus\limits_{\varphi\in A^*} \one(\varphi) \to
        \prod_{a \in A} \one
    \]
    is represented by the discrete Fourier transform $(A^* \times A)$-matrix $c_{\varphi,a} =\omega_m^{\varphi(a)}$. The square of the determinant of this Fourier transform matrix is $|A|^{|A|}$,  which is invertible in the ring $\ZZ[\inv{m},\omega_m]$ since $A$ is an $m$-torsion group. Hence, this matrix is invertible also in $\pi_0(\one)$. 
\end{proof}

\subsection{Fourier Transform}

\subsubsection{Construction}

Consider the composition 
\[
    A^* \oto{\one(-)}  \pic(\cC^{BA}) \to 
    \cC^{BA},
\]
of symmetric monoidal functors,
in which the second functor is the counit map described below \Cref{def:pic}. We shall denote it again by $\one(-)$. Since the dual of an invertible object coincides with its inverse we have
\[
    \one(\varphi)^\vee \simeq \one(-\varphi).
\]
Consider the following composition of functors
\[
    \widehat{\Fur}\colon
      A^*\times \cC^{BA} \oto{\one(-)^\vee \times \Id }
    \cC^{BA} \times \cC^{BA} \oto{\otimes}
    \cC^{BA} \oto{(-)^{hA}}
    \cC.
\]
On the level of objects, for every $X\in\cC^{BA}$ and $\varphi\in A^*$, we have 
\[
    \widehat{\Fur}(\phi,X) \simeq 
    X(-\varphi)^{hA}.
\]
This should be thought of as extracting from $X$ the eigenspace corresponding to the character $\varphi$. Taking the mate of the above functor under the exponential law, we get:

\begin{defn}[Fourier Transform]\label{DFT}
    Let $\cC\in\calg(\Pr_\add)$ with a choice of a primitive $m$-th root of unity and let $A$ be a finite $m$-torsion abelian group. We define the $\cC$-\textbf{Fourier transform} to be the functor
    \[
        \Fur \colon \cC^{BA} \to
        \Fun(A^*,\cC)
    \]
    given by
    \(
        \Fur(X)_\varphi := \widehat{\Fur}(\phi,X).
    \)
\end{defn}
The category of functors from $A^*$ to $\cC$ can be endowed with the \textit{Day convolution} symmetric monoidal structure, which we denote by $\Fun(A^*,\cC)_\Day$ (see \cite[\S 2.2.6]{HA}, \cite{glasman2016day}). By \cite[Example 2.2.6.9]{HA}, the construction of $\Fun(A^*,\cC)_\Day$ is a special case of the  norm construction for $\infty$-operads, in the sense of \cite[Definition 2.2.6.1]{HA}. Thus, by its universal property, we have an equivalence of $\infty$-categories
\[
    \Fun^{\tlax}(A^* \times \cC^{BA},\cC) \simeq
    \Fun^{\tlax}(\cC^{BA}, \Fun(A^*,\cC)_\Day).
\]
Since $\widehat{\Fur}$ is lax symmetric monoidal, as a composition of functors that are canonically such, the functor $\Fur$ acquires a lax symmetric monoidal structure as well. In fact,

\begin{prop}
\label{DFT_Equiv}
    Let $\cC\in\calg(\Pr_\add)$ with a choice of a primitive $m$-th root of unity and let $A$ be a finite $m$-torsion abelian group. The $\cC$-Fourier transform 
    \[
        \Fur \colon 
        \cC^{BA} \to
        \Fun(A^*,\cC)_\Day
    \]
    is a (strong) symmetric monoidal equivalence.
\end{prop}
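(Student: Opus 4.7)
The plan is to prove this by reducing to an explicit calculation on a $\cC$-linear generating set. Specifically, I will show that $\Fur$ sends the twisted units $\one(\varphi) \in \cC^{BA}$ to the ``delta'' objects $\delta_\varphi \in \Fun(A^*, \cC)$, verify it is fully faithful on these generators, and deduce both that it is an equivalence of underlying $\infty$-categories and that its lax symmetric monoidal structure is strong.

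First, I observe that both categories are presentable and $\cC$-linear, and that $\Fur$ is itself $\cC$-linear and colimit-preserving. The $\cC$-linearity is clear from the construction (both tensoring and homotopy fixed points are $\cC$-linear operations), while colimit preservation uses the $\cC$-ambidexterity of $BA$ to identify $(-)^{hA}$ with the colimit-preserving $(-)_{hA}$ via the norm. A $\cC$-linear generating family for $\cC^{BA}$ is $\{\one(\varphi)\}_{\varphi \in A^*}$: by \Cref{Characters}(2), their direct sum is $\prod_A \one \simeq e_!\one$, which is a $\cC$-linear generator via the free--forgetful adjunction $\cC \adj \cC^{BA}$. On the other side, the ``indicator'' functors $\delta_\varphi$ (valued at $\one$ on $\varphi$ and at $0$ elsewhere) form the analogous $\cC$-linear generating family for $\Fun(A^*, \cC)$.

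Next, the computation of $\Fur$ on generators is immediate from the orthogonality of characters: $\Fur(\one(\varphi))_\psi = \one(\varphi - \psi)^{hA}$, which by \Cref{Characters}(1) equals $\one$ when $\varphi = \psi$ and $0$ otherwise, so $\Fur(\one(\varphi)) \simeq \delta_\varphi$. The same orthogonality argument, applied to the $\cC$-enriched mapping object $\hom_{\cC^{BA}}(\one(\varphi), \one(\psi)) \simeq \one(\psi - \varphi)^{hA}$, matches the corresponding $\hom_{\Fun(A^*, \cC)}(\delta_\varphi, \delta_\psi)$, so $\Fur$ is fully faithful on the generating family; combined with $\cC$-linearity and colimit preservation, this shows $\Fur$ is an equivalence of underlying $\infty$-categories.

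Finally, to upgrade this to a symmetric monoidal equivalence, the full subcategory of pairs $(X, Y)$ on which the lax structure map $\Fur(X) \otimes^{\Day} \Fur(Y) \to \Fur(X \otimes Y)$ is an equivalence is closed under colimits and $\cC$-tensoring separately in each of $X$ and $Y$, because both the Day tensor product and $\Fur$ preserve these. Thus it suffices to check on pairs $(\one(\varphi), \one(\psi))$, where both source and target evaluate to $\delta_{\varphi + \psi}$: the target because $\one(\varphi) \otimes \one(\psi) \simeq \one(\varphi + \psi)$ and we have just computed $\Fur$ on units, and the source by the Day convolution formula over the discrete symmetric monoidal category $A^*$. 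The main obstacle will be verifying that the lax structure map realizes this as the canonical identification rather than a non-invertible self-map of $\delta_{\varphi + \psi}$; this amounts to tracing through the definition of $\Fur$ as the mate of $\widehat{\Fur}$ on the specific pair $(\one(\varphi), \one(\psi))$, with the character orthogonality forcing all cross-terms to vanish and leaving only the expected isomorphism.
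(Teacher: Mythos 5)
Your approach is correct in substance but takes a genuinely different route from the paper, and there is one step that needs to be spelled out. The paper shows $\Fur$ is an equivalence by exhibiting a \emph{left adjoint} $\invFur(\{X_\varphi\}) = \bigoplus_\varphi X_\varphi(\varphi)$, computing directly (via \Cref{Characters}(1)) that the unit $\Id \to \Fur\circ\invFur$ is an isomorphism on \emph{every} object, and then deducing essential surjectivity from \Cref{Characters}(2) plus the fact that the induced local systems $\prod_A X$ generate $\cC^{BA}$ under colimits. You instead verify full faithfulness by computing $\cC$-enriched mapping objects $\hom_\cC(\one(\varphi),\one(\psi))$ between generators and matching them with $\hom_\cC(\delta_\varphi,\delta_\psi)$, then extending by colimits. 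This is a reasonable alternative, but as stated the step ``fully faithful on generators $+$ $\Fur$ colimit-preserving $\Rightarrow$ fully faithful on everything'' is not automatic: to run the standard bootstrap (fix one variable, extend the other by colimits) you also need that $\hom_\cC(\one(\varphi),-)$ and $\hom_\cC(\delta_\varphi,-)$ preserve colimits, i.e.\ that the generators are $\cC$-compact. That is indeed true here, because $\hom_\cC(\one(\varphi),Y) \simeq (Y(-\varphi))^{hA}$ and $(-)^{hA}\simeq(-)_{hA}$ by ambidexterity, but it is a separate use of ambidexterity from the one you invoke for colimit-preservation of $\Fur$ itself, and it should be recorded explicitly. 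The paper's route via the left adjoint avoids this subtlety entirely, since one only needs colimit-preservation of $\invFur$ (which is tautological) together with a pointwise computation of the unit. On the symmetric monoidal part your argument matches the paper's (check on generating objects); your final caveat about verifying that the lax structure map is the expected isomorphism rather than some degenerate self-map is a fair point that the paper handles rather tersely, and can be resolved by unwinding the Day convolution formula together with \Cref{Characters}(1) to see that all cross terms vanish.
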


\begin{proof}
    We first show that $\Fur$ is an equivalence of $\infty$-categories (ignoring the symmetric monoidal structure) by showing that it admits a fully faithful and essentially surjective left adjoint. The functor $\Fur$ admits a left adjoint
    \[
        \invFur \colon 
        \Fun(A^*,\cC) \to 
        \cC^{BA},
    \]
    given by tensoring pointwise with the functor $\one(-)\colon A^*\to \cC^{BA}$ followed by taking the direct sum over $A^*$. Thus, its value on objects is given by 
    \[
        \invFur(\{X_\varphi\}) =
        \bigoplus_{\varphi\in A^*} X_\varphi(\varphi).
    \]
    To show that $\invFur$ is fully faithful, it suffices to show that the unit of the adjunction $\invFur \dashv \Fur$ is an isomorphism. 
    Unwinding the definitions and using \Cref{Characters}(1), we get
    \[
        \Fur(\invFur (\{X_\varphi\}))_\psi \simeq 
        (\bigoplus_{\varphi \in A^*} X_\varphi(\varphi-\psi)) ^ {hA} \simeq X_\psi
    \]
    and that the unit map under this identification is the identity. 
    Now, we observe that for all $X\in\cC$, the induced representations (see \cref{Characters}(2))
    \[
        \prod_{A} X \simeq 
        \bigoplus_{\varphi\in A^*} X(\varphi)
    \]
    are in the essential image of $\invFur$. Since these generate $\cC^{BA}$ under colimits (by \cite[Proposition 4.3.8]{AmbiKn}), and $\invFur$ is fully faithful, we deduce that $\invFur$ is essentially surjective and hence is an equivalence. 
    
    We now turn to the preservation of the symmetric monoidal structure. To show that $\Fur$ is strong symmetric monoidal, it suffices to consider objects of the form $X(\varphi)$ for $X\in\cC$ and $\varphi\in A^*$, as they generate $\cC^{BA}$ under colimits and $\Fur$ is colimit preserving (being an equivalence). For such objects, we have by \Cref{Characters}(1)
    \[
        \Fur(X(\varphi))_\psi \simeq 
        X(\varphi-\psi)^{hA} \simeq 
        \begin{cases}
            X & \psi=\varphi \\
            0 & \text{else}
        \end{cases}
    \] 
    and the structure map
    \[
        \Fur(X(\varphi))\otimes \Fur(Y(\psi)) \to
        \Fur((X\otimes Y)(\varphi+\psi))
    \]
    is the obvious isomorphism. One can similarly show that $\Fur$ is unital and hence strong symmetric monoidal. 
\end{proof}

\subsubsection{Fourier of rings}
In the situation of \Cref{DFT_Equiv}, the symmetric monoidal equivalence 
\[
    \Fur \colon \cC^{BA} \iso \Fun(A^*,\cC)_\Day
 \]
induces an equivalence of the $\infty$-categories of commutative algebra objects. By \cite[Example 2.2.6.9]{HA}, we have
\[
    \calg(\cC)^{BA} \simeq
    \calg(\cC^{BA}) \iso 
    \calg(\Fun(A^*,\cC)_\Day) \simeq
    \Fun^\tlax(A^*,\cC).
\]

\begin{rem}
    Informally, this equivalence expresses the fact that for $R\in\calg(\cC)^{BA}$, the $A$-equivariant decomposition into eigenspaces
    \[
        R \simeq \bigoplus_{\varphi\in A^*} R_\varphi(\varphi)
        \qin \cC^{BA}
    \]
    is also compatible with the \emph{multiplicative} structure. Namely, the unit and multiplication maps of $R$ decompose respectively through maps
    \[
        \one \to R_0 \quad \text{and} \quad  
        R_\varphi \otimes R_\psi \to R_{\varphi+\psi},
    \]
    in a coherent way. 
\end{rem}

Given $R\in \calg(\cC^{BA})$, we shall now express the lax symmetric monoidal functor 
$\Fur(R)\colon A^* \to \cC,$
in terms of the twisting functor $T_R$ of \Cref{Twisting_Functor}. For this, we first discuss the following general setting. Let $\cD\in \calg(\Prl)$ and let $R\in\calg(\cD)$. The functor 
$R\otimes(-)\colon \cD \to \cD$
can be made lax symmetric monoidal in two ways. First, as a composition of the functors in the free-forgetful symmetric monoidal adjunction
\[
    \cD \oto{F_R} 
    \Mod_R(\cD) \oto{U_R}
    \cD.
\]
Second, the tensor product functor 
$\cD \times \cD \oto{\otimes} \cD$
corresponds to a lax symmetric monoidal functor
\[
    S_{(-)}\colon\cD \to \Fun(\cD,\cD)_\Day,
\]
which on objects is given by 
$S_X(Y) = X\otimes Y.$
This induces a functor on the $\infty$-categories of commutative algebras
\[
    S_{(-)}\colon\calg(\cD) \to \Fun^\tlax(\cD,\cD),
\]
so that $S_R(-)=R\otimes(-)$, becomes lax symmetric monoidal. We shall need the fact that these two lax symmetric monoidal structures on the functor $R\otimes (-)$ are in fact equivalent. 

\begin{prop}\label{Lax_Two_Ways}
     Let $\cD\in \calg(\Prl)$ and let $R\in\calg(\cD)$. We have an isomorphism
     \[
        U_R\circ F_R \simeq S_R
        \qin \Fun^\lax(\cD,\cD).
     \]
\end{prop}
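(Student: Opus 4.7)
My plan is to recognize both lax symmetric monoidal functors as images of the common commutative algebra $R\in\calg(\cD)$ under a single canonical construction, so that the desired equivalence follows formally. Using $\Fun^\tlax(\cD,\cD)\simeq\calg(\Fun(\cD,\cD)_\Day)$, I reduce the task to identifying two commutative algebra objects of the Day-convolution $\infty$-category with the same underlying endofunctor $R\otimes(-)$.

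For the $S_R$ side, the universal property of Day convolution \cite[Example 2.2.6.9]{HA} tells me that the strong symmetric monoidal tensor product $\otimes\colon\cD\otimes\cD\to\cD$ classifies a strong symmetric monoidal functor $S_\bullet\colon\cD\to\Fun(\cD,\cD)_\Day$. Applying $\calg$ recovers the functor $S_{(-)}$ of the statement, so $S_R$ is tautologically the image of $R$. For the $U_RF_R$ side, I would invoke the universal property of $\Mod_R(\cD)$ from \cite[Theorem 4.8.5.16]{HA}, which presents $\Mod_R(\cD)$ as the free presentable symmetric monoidal $\cD$-linear $\infty$-category on a pointed unit $\one\to R$; by that universal property, the symmetric monoidal adjunction $F_R\dashv U_R$ depends naturally on $R$, and its composite $U_R\circ F_R$ defines an algebra object of $\Fun(\cD,\cD)_\Day$ that is functorial in $R$.

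To match the two constructions, I would evaluate at $\one\in\cD$: the functor $\mathrm{ev}_\one\colon\Fun(\cD,\cD)_\Day\to\cD$ is strong symmetric monoidal since $\one$ is the unit for $\otimes$, and it sends both $S_R$ and $U_RF_R$ to $R\in\calg(\cD)$ with its tautological commutative algebra structure. Both endofunctors are colimit preserving and $\cD$-linear (for $S_R$ this is clear; for $U_RF_R$ it follows from the projection formula $U_R(M)\otimes N\simeq U_R(M\otimes_R F_R(N))$), so each is recovered from its value at $\one$. I would then argue that $\mathrm{ev}_\one$ is fully faithful on the subcategory of colimit-preserving $\cD$-linear lax symmetric monoidal endofunctors, whence $S_R\simeq U_RF_R$ in $\Fun^\tlax(\cD,\cD)$. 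The main obstacle is strictly coherence rather than content: the underlying functor and the unit and multiplication structure maps visibly agree when written out, so the real work lies in packaging this pointwise agreement into an honest $\infty$-categorical equivalence; I expect the universal-property argument above to handle this cleanly, with a direct construction via the unit and counit of $F_R\dashv U_R$ as a backup.
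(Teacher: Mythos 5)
Your reduction to identifying two objects of $\calg(\Fun(\cD,\cD)_\Day)$ with the same value at $\one$ is a reasonable framing, but the step you flag as "strictly coherence rather than content" is exactly where the proof has to live, and the argument you sketch for it does not go through. The claimed full faithfulness of $\mathrm{ev}_\one$ on "colimit-preserving $\cD$-linear lax symmetric monoidal endofunctors" is problematic for two reasons. First, "colimit-preserving $\cD$-linear" is a condition on the \emph{underlying} functor, but a morphism in $\calg(\Fun(\cD,\cD)_\Day)$ between two such algebras is just a natural transformation compatible with the lax monoidal structures; there is no a priori reason it must respect the $\cD$-linear structure, so you cannot conclude it is determined by its value at $\one$. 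Making this precise would require showing that the lax monoidal structure of, say, $U_RF_R$ interacts coherently with the $\cD$-linearity of the underlying functor $R\otimes(-)$ — but that interaction is precisely the content of the proposition you are trying to prove. Second, a couple of the monoidality claims are off: $S_{(-)}\colon\cD\to\Fun(\cD,\cD)_\Day$ is only \emph{lax} symmetric monoidal (the Day unit $X\mapsto\one\otimes\Map(\one,X)$ is not the identity functor $S_\one$), and likewise $\mathrm{ev}_\one$ is lax rather than strong monoidal. These do not break your reduction (lax monoidal already gives a functor on $\calg$), but they signal that the symmetric monoidal bookkeeping is more delicate than stated.

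Your "backup" — working directly with the unit and counit of $F_R\dashv U_R$ — is in fact the paper's actual argument, and it is not merely a fallback but the substantive content. The paper packages both functors via the lax monoidal $S_{(-)}$ at the level of $\cD$ and $\Mod_R(\cD)$, produces a Beck–Chevalley transformation $\beta\colon S_{U(-)}\to U\circ S_{(-)}\circ F$ by passing to right adjoints in a square that commutes on the nose, identifies $\beta$ with the projection-formula isomorphism $U(M)\otimes X\iso U(M\otimes_R F(X))$, and then applies $\calg(-)$ and evaluates at $\one$. This bypasses any appeal to full faithfulness of $\mathrm{ev}_\one$ and handles the coherence by construction. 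I would encourage you to develop that route rather than the $\mathrm{ev}_\one$ one.
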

\begin{proof}
    For convenience we write $U:=U_R$ and $F:=F_R$.
    We observe that the composition
    \[
        \cD \times \Mod_R(\cD) \oto{F\times \Id}
        \Mod_R(\cD) \times \Mod_R(\cD) \oto{\otimes} \Mod_R(\cD)
    \]
    induces the lax symmetric monoidal functor
    \[
        S_{(-)}\circ F\colon 
        \Mod_R(\cD) \to 
        \Fun(\cD,\Mod_R(\cD))_\Day 
    \]
    Consider the following diagram of symmetric monoidal $\infty$-categories and lax symmetric monoidal functors:
\[
        \xymatrix{
            \cD\ar[d]^{F}\ar[rr]^-{S_{(-)}} &  & \Fun(\cD,\cD)_{\Day}\ar[d]^{F\circ(-)}\\
            \Mod_{R}(\cD)\ar[d]^{U}\ar[rr]^-{S_{(-)}\circ F} &  & \Fun(\cD,\Mod_{R}(\cD))_{\Day}\ar[d]^{U\circ(-)}\\
            \cD\ar[rr]^-{S_{(-)}} &  & \Fun(\cD,\cD)_{\Day}.
        }
    \]
    The top square commutes by construction. The bottom square is obtained from the top square by taking right adjoints of the vertical functors. Thus, it canonically ``lax commutes'' in the sense that we have the Beck-Chevalley natural transformation of lax symmetric monoidal functors
    \[
        \beta \colon 
        S_{U(-)} \to 
        U\circ F \circ S_{U(-)} \iso 
        U\circ S_{FU(-)} \circ F \to U \circ S_{(-)} \circ F,
    \]
    where the first and last maps are the unit and counit of the respective adjunctions. For every $M\in\Mod_R(\cD)$ and $X\in\cD$ this is the composition
    \[
        M\otimes X \to 
        R \otimes (M \otimes X) \iso 
        (R\otimes M) \otimes_R (R\otimes X) \to 
        M \otimes_R (R\otimes X),
    \]
    where the first map is induced by the unit $\one \to R$ and the last by the action $R\otimes M \to M$, and hence is an isomorphism for all $M$ and $X$. Therefore the diagram commutes up to homotopy.
    Applying $\calg(-)$ to it, we get that the composition
    \[
        \calg(\cD) \oto{S_{(-)}} 
        \Fun^{\lax}(\cD,\cD) \oto{F\circ(-)}
        \Fun^{\lax}(\cD,\Mod_{R}(\cD)) \oto{U\circ(-)}
        \Fun^{\lax}(\cD,\cD)
    \]
    can be identified with the composition
        \[
        \calg(\cD) \oto{\quad F\quad} 
        \calg_{R}(\cD) \oto{\quad U\quad}
        \calg(\cD) \oto{\quad S_{(-)}\quad}
        \Fun^{\lax}(\cD,\cD).
    \]
    Applying this to $\one\in \calg(\cD)$, we get 
    \(
        U\circ F \simeq S_R
        \,\in\, \Fun^\tlax(\cD,\cD).
    \)   
\end{proof}

\begin{prop}\label{Fourier2Twisting}
    Let $\cC\in\calg(\Pr_\add)$ with a choice of a primitive $m$-th root of unity and let $A$ be a finite $m$-torsion abelian group. For $R\in\calg(\cC^{BA})$, the functor 
    $\Fur(R)\colon A^* \to \cC$
    is homotopic, as a lax symmetric monoidal functor, to the composition
    \[
        A^* \oto{\one(-)^\vee} \cC^{BA} \oto{T_R} \cC.
    \]
\end{prop}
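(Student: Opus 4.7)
The plan is to reduce the identification to \Cref{Lax_Two_Ways} by carefully tracing the lax symmetric monoidal structures on both sides. On the level of underlying functors the equivalence is immediate: unwinding the definitions gives
\[
    \Fur(R)_\varphi \simeq \widehat{\Fur}(R,\varphi) \simeq (R\otimes \one(-\varphi))^{hA} \simeq T_R(\one(\varphi)^\vee),
\]
so the entire content of the proposition lies in matching lax structures.

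First, I would use the universal property of Day convolution to reinterpret $\Fur(R)$. By construction, $\Fur$ is the lax symmetric monoidal mate of $\widehat{\Fur}$ under the equivalence $\Fun^\tlax(A^*\times\cC^{BA},\cC)\simeq \Fun^\tlax(\cC^{BA},\Fun(A^*,\cC)_\Day)$. Applying $\calg(-)$ to both sides and invoking the identification of commutative algebras in a Day-convolution $\infty$-category with lax symmetric monoidal functors, one obtains that for each $R\in\calg(\cC^{BA})$ the lax symmetric monoidal functor $\Fur(R)\colon A^*\to\cC$ is canonically identified with the restriction of $\widehat{\Fur}$ in its $\cC^{BA}$-coordinate along the commutative algebra $R$.

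Using the defining factorization of $\widehat{\Fur}$ as $(-)^{hA}\circ \otimes \circ (\one(-)^\vee\times \Id)$, this restriction splits as a composition of lax symmetric monoidal functors
\[
    A^*\xrightarrow{\one(-)^\vee} \cC^{BA}\xrightarrow{\,\,S_R\,\,} \cC^{BA}\xrightarrow{(-)^{hA}} \cC,
\]
in which $\one(-)^\vee$ is strong symmetric monoidal, and the lax structure on $S_R=R\otimes(-)$ is the one produced from the composite $S_{(-)}\colon\calg(\cC^{BA})\to \Fun^\tlax(\cC^{BA},\cC^{BA})$ evaluated on the commutative algebra $R$. Now \Cref{Lax_Two_Ways} provides an equivalence $S_R\simeq U_R\circ F_R$ of lax symmetric monoidal endofunctors of $\cC^{BA}$. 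Composing with $\one(-)^\vee$ on the right and with $(-)^{hA}$ on the left then yields $\Fur(R)\simeq (-)^{hA}\circ U_R\circ F_R\circ \one(-)^\vee = T_R\circ \one(-)^\vee$ as lax symmetric monoidal functors, which is exactly the claim.

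I expect the main obstacle to be the first step, namely making rigorous the claim that the mate $\widehat{\Fur}\mapsto \Fur$ interacts correctly with passage to commutative algebra objects, so that evaluating $\Fur$ on $R$ really does reproduce the restriction $\widehat{\Fur}(-,R)$ equipped with its natural lax symmetric monoidal structure. Once this book-keeping is in place, the remainder is a formal consequence of \Cref{Lax_Two_Ways} and the factorization of $\widehat{\Fur}$.
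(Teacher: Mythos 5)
Your proposal is correct and follows essentially the same route as the paper: identifying $\Fur(R) \simeq (-)^{hA}\circ S_R\circ \one(-)^\vee$ and $T_R\simeq (-)^{hA}\circ U_R\circ F_R$ as lax symmetric monoidal functors, then invoking \Cref{Lax_Two_Ways}. The book-keeping point you flag at the end---that the mate construction for Day convolution is compatible with passage to commutative algebra objects---is real but standard, and the paper likewise leaves it implicit.
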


\begin{proof}
    Unwinding the definitions, for all $\varphi\in A^*$ we have
    \[
        \Fur(R)_\varphi \simeq 
        R(-\varphi)^{hA} \simeq 
        T_R(\one(-\varphi)).
    \]
    More precisely, we have 
    \[
        T_R \simeq 
        (-)^{hA} \circ (U_R\circ F_R)
    \]
    and
    \[
        \Fur(R) \simeq 
        (-)^{hA} \circ S_R \circ \one(-)^\vee     
    \]
    as lax symmetric monoidal functors. Thus, the claim follows from \Cref{Lax_Two_Ways}.
\end{proof}

As a consequence, we obtain a characterization of the Galois property of $R\in\calg(\cC^{BA})$, in terms of its Fourier transform $\Fur(R)\colon A^* \to \cC$. 
\begin{cor}
\label{DFT_Galois}
     Let $\cC\in\calg(\Pr_\add)$ with a choice of a primitive $m$-th root of unity and let $A$ be a finite $m$-torsion abelian group. A commutative algebra $R\in \calg(\cC^{BA})$ is Galois if and only if the lax symmetric monoidal functor $\Fur(R):A^*\to \cC$ is strong symmetric monoidal. 
\end{cor}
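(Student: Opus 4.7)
The plan is to derive this corollary as a formal combination of \Cref{func_crit_galois} and \Cref{Fourier2Twisting}. The former says that $R$ is Galois if and only if the twisting functor $T_R \colon \cC^{BA} \to \cC$ is strong symmetric monoidal, while the latter identifies $\Fur(R)$ with the composite $T_R \circ \one(-)^\vee$ as lax symmetric monoidal functors $A^* \to \cC$. So the task reduces to showing that, under this identification, strong monoidality of $T_R$ corresponds to strong monoidality of $\Fur(R)$.

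For the forward direction, I would first observe that $\one(-)^\vee \colon A^* \to \cC^{BA}$ is itself strong symmetric monoidal. Indeed, $\one(-)$ is obtained from a map of connective spectra landing in $\pic(\cC^{BA})$ (see \Cref{def_aigen_spaces}), which makes it strong monoidal as a functor on the discrete symmetric monoidal $\infty$-category $A^*$; passing to monoidal duals preserves this property, using the identification $\one(\varphi)^\vee \simeq \one(-\varphi)$. Consequently, if $T_R$ is strong symmetric monoidal, then so is the composite $\Fur(R) \simeq T_R \circ \one(-)^\vee$.

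For the converse, the plan is to reduce strong monoidality of $T_R$ to a check on the generators $\one(\varphi)$, $\varphi \in A^*$. By \Cref{Twisting_Functor_Linear}, $T_R$ is $\cC$-linear and in particular cocontinuous, so both sides of the structure map
\[
    T_R(X) \otimes T_R(Y) \to T_R(X \otimes Y)
\]
are cocontinuous in each of $X$ and $Y$. By \Cref{Characters}(2), the induced representation $\prod_A \one$ decomposes as $\bigoplus_{\varphi \in A^*} \one(\varphi)$, and since such induced representations generate $\cC^{BA}$ under colimits and $\cC$-tensoring, the objects $\one(\varphi)$ together with $\cC$-tensorings generate $\cC^{BA}$ under colimits. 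The $\cC$-linearity of $T_R$ then reduces the check to pairs $(\one(\varphi), \one(\psi))$ with $\varphi, \psi \in A^*$. On such pairs, \Cref{Fourier2Twisting} identifies the structure map with the Day-convolution structure map of $\Fur(R)$, which is an isomorphism by hypothesis. Unitality is handled analogously by testing on $\one(0) \simeq \one$.

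No step presents a serious obstacle; given the preceding propositions, the corollary is essentially formal. The only mild subtlety is the reduction-to-generators argument for the converse direction, which relies on the $\cC$-linearity of the twisting functor together with the character decomposition of induced representations from \Cref{Characters}.
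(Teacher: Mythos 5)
Your proof is correct and follows essentially the same route as the paper: combine \Cref{func_crit_galois} with \Cref{Fourier2Twisting}, and reduce strong monoidality of $T_R$ to a check on the generators $\one(\varphi)$ using $\cC$-linearity and cocontinuity from \Cref{Twisting_Functor_Linear}. The only cosmetic differences are that you split the ``iff'' into two directions (the forward one being the cheap observation that $\one(-)^\vee$ is strong monoidal since it factors through $\pic(\cC^{BA})$), and that you justify the generation claim via \Cref{Characters}(2) and the decomposition of induced representations rather than via the Yoneda-embedding description coming out of \Cref{DFT_Equiv}; both routes rest on the same underlying fact and are equivalent.
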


\begin{proof}
    Since $BA$ is $\cC$-ambidextrous, by \Cref{func_crit_galois}, $R$ is Galois if and only if $T_R$ is strong symmetric monoidal. By \Cref{Fourier2Twisting} we have an equivalence of lax symmetric monoidal functors $\Fur(R)\simeq T_R \circ \one(-)^\vee$. Thus, we wish to show that $T_R$ is strong symmetric monoidal if and only if its pre-composition with $\one(-)^\vee$ is strong symmetric monoidal. 
    Since $T_R$ is $\cC$-linear and colimit preserving (by \Cref{Twisting_Functor_Linear}), it remains to show that the image of $\one(-)^\vee$, or equivalently, of $\one(-)$ generates $\cC^{BA}$ under colimits and tensoring with objects of $\cC$. The $\infty$-category $\cC^{BA}$ is generated under colimits by the induced objects 
    \[
        e_*X = \prod_{A}X\simeq X\otimes \prod_A \one
    \] 
    for $X\in \cC$, where $e\colon \pt \to BA$ is a base-point. Consequently, it is generated under colimits and tensoring with objects of $\cC$ by the single object $\prod_{A} \one$. Finally, by \Cref{Characters}(2) applied to $X=\one$ we have an isomorphism 
    \[
    \prod_{A} \one \simeq \bigoplus_{\varphi \in A^*} \one(\varphi), 
    \]
    and hence the generator $\prod_A \one$ is a direct sum of objects in the image of $\one(-)$.

    
\end{proof}

\subsection{Galois and Picard}

Using the results of the previous subsection, we obtain the following $\infty$-categorical version of Kummer theory:
\begin{thm}[Kummer Theory]
\label{Kummer_Theory} 
    Let $\cC\in\calg(\Pr_\add)$ with a choice of a primitive $m$-th root of unity $\omega \in \mu^{\mathrm{prim}}_m(\cC) $ and let $A$ be a finite $m$-torsion abelian group. The $\cC$-Fourier transform induces an isomorphism  
    \[
        \GalExt{\cC}{A} \iso
        \Map_{\Sp^\cn}(A^*,\pic(\cC)),
    \]
    natural in the pair $(\cC,\omega)$.
    Moreover, one can replace $\pic(\cC)$ with its $1$-truncation $\pic(\cC)_{\le1}$ in the above isomorphism.
\end{thm}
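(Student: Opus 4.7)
The plan is to combine the Fourier transform equivalence with the characterization of Galois extensions via strong monoidality, and then identify strong symmetric monoidal functors out of the discrete group $A^*$ with maps of connective spectra into $\pic(\cC)$.

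First, I would apply \Cref{DFT_Equiv} at the level of commutative algebra objects. The symmetric monoidal equivalence
\[
    \Fur\colon \cC^{BA}\iso \Fun(A^*,\cC)_{\Day}
\]
induces an equivalence
\[
    \calg(\cC^{BA}) \simeq \calg(\Fun(A^*,\cC)_{\Day}) \simeq \Fun^{\tlax}(A^*,\cC),
\]
where the second step is the universal property of Day convolution (\cite[Example 2.2.6.9]{HA}). By \Cref{DFT_Galois}, an object on the left is Galois precisely when its image on the right is \emph{strong} symmetric monoidal, so restricting to the relevant full subspaces gives an equivalence $\GalExt{\cC}{A} \simeq \Fun^{\otimes}(A^*,\cC)$.

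Second, I would identify $\Fun^{\otimes}(A^*,\cC)$ with $\Map_{\Sp^\cn}(A^*,\pic(\cC))$. Since $A^*$, viewed as a discrete symmetric monoidal $\infty$-category, is grouplike, every strong symmetric monoidal functor $A^*\to \cC$ sends each object to an invertible one and therefore factors through the inclusion $\Omega^\infty \pic(\cC)\hookrightarrow \cC$. Strong symmetric monoidal maps between grouplike commutative monoids in spaces correspond, under the equivalence between grouplike commutative monoids and connective spectra, to maps of connective spectra from $A^*$ to $\pic(\cC)$.

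Third, for the ``moreover'' clause I would show that the fiber of the truncation map
\[
    \Map_{\Sp^\cn}(A^*,\pic(\cC))\to \Map_{\Sp^\cn}(A^*,\pic(\cC)_{\le 1})
\]
is contractible; equivalently, that $\Map_{\Sp^\cn}(A^*,\pic(\cC)_{\ge 2})$ vanishes. For $k\ge 2$ one has $\pi_k\pic(\cC)\simeq \pi_{k-1}(\one^\times)\simeq \pi_{k-1}\one$, which is a module over the $m$-invertible commutative ring $\pi_0\one$ and is therefore itself $m$-invertible. Since $A^*$ is a finite $m$-torsion abelian group, both $\hom(A^*,M)$ and $\Ext^1(A^*,M)$ vanish for any $m$-invertible abelian group $M$: this follows by decomposing $A^*$ into cyclic summands $\ZZ/d$ with $d\mid m$ and using the resolution $\ZZ\xrightarrow{d}\ZZ\to \ZZ/d$. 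A two-term free resolution of $A^*$ then shows that for any connective spectrum $Y$, each homotopy group $\pi_k\Map_{\Sp^\cn}(A^*,Y)$ is built from $\hom(A^*,\pi_k Y)$ and $\Ext^1(A^*,\pi_{k+1}Y)$ via a short exact sequence, and specializing to $Y=\pic(\cC)_{\ge 2}$ yields the desired vanishing.

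The main technical content sits already in \Cref{DFT_Equiv} and \Cref{DFT_Galois}; the only genuinely new step is the truncation calculation, and its only subtlety is keeping track of the $\Ext$-computation out of the Eilenberg--MacLane spectrum of $A^*$, where the $m$-invertibility of the higher stable homotopy of $\one$ does the work.
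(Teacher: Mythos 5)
The main part of your argument is exactly the paper's: restrict the equivalence $\Fur\colon \calg(\cC^{BA})\simeq \Fun^{\tlax}(A^*,\cC)$ along \Cref{DFT_Galois} to get $\GalExt{\cC}{A}\simeq \Fun^{\otimes}(A^*,\cC)$, then identify $\Fun^{\otimes}(A^*,\cC)\simeq \Map_{\Sp^\cn}(A^*,\pic(\cC))$ using that $A^*$ is discrete and grouplike. That is correct.

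Your treatment of the ``moreover'' clause has a genuine gap. Under the identification $\CMon^{\rm gp}(\Spc)\simeq \Sp^\cn$, the discrete group $A^*$ corresponds to its Eilenberg--MacLane spectrum $HA^*$, \emph{not} to a Moore spectrum. The short exact sequence you invoke,
\[
    0\to \Ext^1_\ZZ(A^*,\pi_{k+1}Y)\to \pi_k\Map(A^*,Y)\to \hom_\ZZ(A^*,\pi_k Y)\to 0,
\]
is the universal coefficient sequence for mapping \emph{out of a Moore spectrum}; it fails for an Eilenberg--MacLane source, because $\hom_\Sp(H\ZZ,Y)$ is not equivalent to $Y$ (its homotopy groups involve cohomology operations out of $H\ZZ$, not just $\pi_*Y$). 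So ``a two-term free resolution of $A^*$'' does not yield the claimed formula for $\pi_k\Map_{\Sp^\cn}(HA^*,Y)$ for an arbitrary connective $Y$.

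The conclusion you want is still correct, and the fix is simpler than your approach. Since $A^*$ is $m$-torsion, multiplication by $m$ on $HA^*$ is null (multiplication by an integer on an Eilenberg--MacLane spectrum is induced functorially from the abelian group). On the other side, for $k\ge 2$ one has $\pi_k\pic(\cC)\simeq \pi_{k-1}\one$, a module over the ring $\pi_0\one$ in which $m$ is invertible, so $m$ acts invertibly on every homotopy group of $\pic(\cC)_{\ge 2}$ and hence is an equivalence of spectra. Therefore $m$ is simultaneously nullhomotopic and an equivalence on $\hom_{\Sp}(HA^*,\pic(\cC)_{\ge 2})$, which forces that mapping spectrum to vanish; passing to connective covers, the fiber of $\Map_{\Sp^\cn}(A^*,\pic(\cC))\to \Map_{\Sp^\cn}(A^*,\pic(\cC)_{\le 1})$ is contractible. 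This is also the argument the paper is implicitly invoking when it notes that $\pi_{n-1}\one$ is $m$-divisible.
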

\begin{proof}
    In view of \Cref{DFT_Galois}, the natural equivalence 
    \[
        \Fur\colon
        \calg(\cC^{BA})\iso
        \Fun^{\tlax}(A^*,\cC)
    \]
    restricts to a natural equivalence 
    \[
        \GalExt{\cC}{A}\iso
        \Fun^\otimes(A^*,\cC).
    \]
    Since $A^*$ is an abelian group, we have 
    \[
        \Fun^\otimes(A^*,\cC) \simeq
        \Map_{\calg(\cat_\infty)}(A^*,\cC) \simeq
        \Map_{\Sp^\cn}(A^*,\pic(\cC)).
    \]
    Finally, for $n\ge2$, we have
    \[
        \pi_n\pic({\cC}) \simeq 
        \pi_{n-1}(\one^\times) \simeq
        \pi_{n-1}(\one),
    \] 
    which is $m$-divisible (since $\cC$ admits a primitive $m$-th root of unity). Thus, we get
    \[
        \GalExt{\cC}{A} \simeq \Map_{\Sp^\cn}(A^*,\pic(\cC)) \simeq
        \Map_{\Sp^\cn}(A^*,\pic(\cC)_{\le 1}).
    \] 
\end{proof}

To summarize, given $R\in \calg(\cC^{BA})$, we have a decomposition into eigenspaces
$R \simeq \bigoplus_{\varphi\in A^*} R_\varphi$ 
as objects of $\cC$, and the unit and multiplication of $R$ are induced from maps 
\[
    \one \to R_0\quad,\quad 
    R_\varphi \otimes R_\psi \to R_{\varphi+\psi}.
\]
Now, $R$ is \emph{Galois} if and only if those maps are \emph{isomorphisms}, in which case the $R_\varphi$-s are invertible and assemble into a map
$R_{(-)}\colon A^*\to \pic(\cC).$

\begin{rem}\label{Galois_Group}
    The equivalence of \Cref{Kummer_Theory} induces an abelian group structure on the set $\pi_0(\GalExt{\cC}{A})$. In fact, this set always admits a canonical group structure, even without assuming the existence of primitive roots of unity. The objects of $\GalExt{\cC}{A}$ can be viewed as local systems of commutative algebras on $BA$. The external product $R\boxtimes S$ of two such, as a local system on $BA\times BA$, can be pushed forward along the addition map $BA\times BA\oto{\alpha} BA$ to produce a new local system $R+_AS:=\alpha_*(R\boxtimes S)$ of commutative algebras on $BA$. It can be shown that if $R$ and $S$ are $A$-Galois extensions, then $R+_AS$ is an $A$-Galois extension and that this operation endows $\pi_0(\GalExt{\cC}{A})$ with an abelian group structure. In the situation of \Cref{Kummer_Theory}, this group structure coincides with the one induced from $\pic(\cC)$.
\end{rem}

\subsubsection{Cyclic group}

We shall now analyze the case $A = \ZZ/m$ in greater detail. 
For a symmetric monoidal $\infty$-category $\cC$ and a dualizable object $X\in \cC$, we can form the symmetric monoidal dimension (a.k.a Euler characteristic) $\dim(X)\in \pi_0(\one)$ (e.g., see \cite[Defintion 2.2]{PontoShulman}). The symmetric monoidal dimension satisfies 
\[
    \dim(\one) =1\quad
    \text{and} \quad
    \dim(X\otimes Y) = \dim(X)\cdot \dim(Y).
\] 
Hence, it restricts to a group homomorphism
\(
    \dim\colon \Pic(\cC) \to (\pi_0\one)^\times.
\)
We shall now describe this homomorphism in terms of the spectrum $\pic(\cC)$.
\begin{prop}
\label{eta_dim}
    Let $\cC$ be a symmetric monoidal $\infty$-category. The homomorphism  
    \[
        \pi_0\pic(\cC) \simeq
        \Pic(\cC) \oto{\dim } 
        (\pi_0\one)^\times \simeq \pi_1\pic(\cC) 
    \]
    is given by pre-composition with the Hopf map 
    $\eta\in \pi_1(\Sph)$.
\end{prop}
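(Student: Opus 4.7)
Both maps $\dim \colon \Pic(\cC) \to (\pi_0\one)^\times$ and $(-)\cdot \eta \colon \pi_0\pic(\cC) \to \pi_1\pic(\cC)$ are natural in $\cC$ with respect to symmetric monoidal functors: the first because duals and the symmetry isomorphism are preserved, and the second because $\pic$ is functorial on $\calg(\cat_\infty)$ as a map of spectra. The plan is to use this naturality to reduce the identity to a universal example, and verify it there by ruling out triviality. Recall that $\pic\colon\calg(\cat_\infty)\to \Sp^\cn$ admits a fully faithful left adjoint, sending a connective spectrum to its associated Picard $\infty$-groupoid viewed as a symmetric monoidal $\infty$-category (see e.g.\ \cite{GepUniv}). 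Let $\cC_0$ denote the image of the sphere spectrum $\Sph$ under this embedding; then $\pic(\cC_0)\simeq \Sph$ canonically, and the generator $1\in\pi_0(\Sph)$ corresponds to a universal invertible object $L_0\in\Pic(\cC_0)$. For any $L\in\Pic(\cC)$, a classifying symmetric monoidal functor $F_L\colon\cC_0\to \cC$ with $F_L(L_0)\simeq L$ sends $\eta \in \pi_1(\Sph)$ to $\eta\cdot [L]\in \pi_1\pic(\cC)$ (since a map of spectra is automatically $\pi_\ast(\Sph)$-linear) and sends $\dim(L_0)$ to $\dim(L)$ by naturality. Hence it suffices to prove $\dim(L_0)=\eta$ inside $\pi_1(\pic(\cC_0))\simeq \pi_1(\Sph)\simeq \ZZ/2$.

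Since $\pi_1(\Sph)\simeq \ZZ/2$, the element $\dim(L_0)$ is either $0$ or $\eta$, so it suffices to rule out $\dim(L_0)=0$. If $\dim(L_0)$ were zero, then by naturality $\dim(L)$ would be the identity of $(\pi_0\one)^\times$ for every invertible $L$ in every symmetric monoidal $\infty$-category; this is contradicted by $\dim(\Sigma\Sph)=-1\in\{\pm 1\}$ inside $\cC=\Sp$. Therefore $\dim(L_0)=\eta=\eta\cdot [L_0]$, and the identity $\dim(L)=\eta\cdot [L]$ follows universally.

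The main technical input is the identification of the universal example, i.e., the fully faithful embedding $\Sp^\cn\hookrightarrow \calg(\cat_\infty)$ and the resulting equivalence $\pic(\cC_0)\simeq \Sph$. This rests on the standard equivalence between connective spectra and Picard $\infty$-groupoids (grouplike $E_\infty$-monoids in $\Spc$), together with the observation that any symmetric monoidal functor from a Picard $\infty$-groupoid into an arbitrary symmetric monoidal $\infty$-category factors through the maximal Picard subgroupoid of the target.
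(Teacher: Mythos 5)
Your proof is correct and takes essentially the same approach as the paper: both reduce by naturality to the universal case (the paper writes it as the symmetric monoidal $\infty$-category $\Omega^\infty\Sph$, you describe it via the fully faithful left adjoint to $\pic$, but these are the same thing), identify $\pic$ of the universal example with $\Sph$, observe that $\pi_1\Sph \simeq \ZZ/2$ so one only needs to rule out triviality, and witness non-triviality by $\dim(\Sigma\Sph) = -1$ in $\Sp$.
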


\begin{proof}
    The space $\Omega^\infty\Sph$ admits a structure of a commutative monoid in $\Spc$ that we can regard as a symmetric monoidal $\infty$-category. 
    An element $Z\in \Pic(\cC)$ is classified by a map of connective spectra $\Sph\to \pic(\cC)$, which corresponds to a symmetric monoidal functor  
    $
    \Omega^\infty\Sph \to \cC
    $
    sending 
    $1\in \ZZ = \pi_0\Sph$ to $Z$. 
    Since both the dimension and pre-composition with $\eta$ are natural in $\cC$, it suffices to prove the claim for $\cC = \Sph$ and $Z = 1$. 
    
    In this case, we have 
    \[
        \dim(1)\in \pi_1\Sph \simeq \ZZ/2 \cdot \eta,
    \] 
    so we only need to show that $\dim(1)\ne 0$. 
    For this, it suffices to produce some example of an invertible object with a non-trivial dimension. For example, in $\cC =\Sp$ we have
    \[
        \dim(\Sigma\Sph) = -1
        \qin \ZZ^\times = \pi_0\Sph^\times.
    \]
\end{proof}

\begin{cor}\label{Dim_Pic_pm}
    Let $\cC$ be a symmetric monoidal $\infty$-category. For every $X\in \Pic(\cC)$, we have $\dim(X)^2=1$. In particular, if $\pi_0\one$ is a connected ring and $2$ is invertible in $\pi_0\one$, then $\dim(X)=\pm1$.
\end{cor}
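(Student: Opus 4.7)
The plan is to deduce this corollary directly from \Cref{eta_dim}, which expresses $\dim\colon \Pic(\cC) \to (\pi_0\one)^\times$ as pre-composition with the Hopf map $\eta \in \pi_1(\Sph)$. The point is that $\eta$ is $2$-torsion: it is well-known that $2\eta = 0$ in $\pi_1(\Sph) \simeq \ZZ/2$, so for any map of connective spectra $f\colon \Sph \to \pic(\cC)$ classifying $X \in \Pic(\cC)$, the element $\dim(X) = f_*(\eta) \in \pi_1\pic(\cC) \simeq (\pi_0\one)^\times$ satisfies $2\dim(X) = f_*(2\eta) = 0$. Translating back through the isomorphism $\pi_1\pic(\cC) \simeq (\pi_0\one)^\times$, where the additive structure on the left corresponds to the multiplicative structure on the right, this yields $\dim(X)^2 = 1$.

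For the second assertion, I would argue purely ring-theoretically: suppose $u \in \pi_0\one$ satisfies $u^2 = 1$ and that $2$ is invertible. Then $(u-1)(u+1) = 0$, and the element $e := \tfrac{u+1}{2}$ is idempotent since $e^2 = \tfrac{u^2 + 2u + 1}{4} = \tfrac{2u+2}{4} = e$. Because $\pi_0\one$ is connected (in the sense of having no non-trivial idempotents), $e \in \{0,1\}$, giving $u \in \{-1,+1\}$.

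There is no real obstacle here, just the bookkeeping of moving $2\eta = 0$ across the isomorphism $\pi_1\pic(\cC) \simeq (\pi_0\one)^\times$ and remembering that the group law on $\pi_*$ of a spectrum is additive while the induced one on units is multiplicative. Once that is in place, everything follows formally from \Cref{eta_dim}.
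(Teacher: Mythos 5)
Your proposal is correct and follows the same route as the paper: both deduce $\dim(X)^2=1$ from \Cref{eta_dim} together with $2\eta=0$, and both conclude $\dim(X)=\pm1$ from the elementary fact that $t^2=1$ forces $t=\pm1$ in a connected ring where $2$ is invertible. You simply spell out the idempotent $\tfrac{u+1}{2}$ that the paper leaves implicit.
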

\begin{proof}
    The first part follows \Cref{eta_dim} and the fact that $\eta \in \pi_1\Sph$ is 2-torsion. 
    Now, if $2$ is invertible and $\pi_0\one$ admits no non-trivial idempotents, then the only solutions to the equation $t^2-1=0$ are $t=\pm1$.
\end{proof}

Given the above, we shall be interested in the following variant of the Picard group: 
\begin{defn}\label{Def_Pic_Even}
    The \textbf{even Picard group} of a  symmetric monoidal $\infty$-category $\cC$, is the subgroup
    $\Pic^\re(\cC) \le \Pic(\cC)$ given by the kernel of the map
    \(
    \Pic(\cC) \oto{\dim} (\pi_0\one_\cC)^\times.
    \)
\end{defn}

We shall now describe the collection of isomorphism classes of $\ZZ/m$-Galois extensions in $\cC$ in terms of the homotopy groups of the Picard spectrum of $\cC$.
\begin{prop}\label{Kummer_Cyclic}
 Let $\cC\in\calg(\Pr_\add)$ with a choice of a primitive $m$-th root of unity $\omega \in \mu^{\mathrm{prim}}_m(\cC)$. We have a short exact sequence of abelian groups 
    \[
        0 \to 
        (\pi_0\one^\times) / (\pi_0\one^\times)^m \to 
        \pi_0\GalExt{\cC}{\ZZ/m} \to 
        \Pic^\re(\cC)[m] \to 
        0
    \]
    which is natural in the pair $(\cC,\omega)$. 
    Moreover, this sequence splits (though not naturally).
\end{prop}

\begin{proof}
Throughout the proof, we work in the $\infty$-category $\Sp^\cn$. In particular, for $X,Y\in \Sp^\cn$ we denote by $\hom(X,Y)$ the internal mapping object in  \emph{connective} spectra. By \Cref{Kummer_Theory}, we have a natural isomorphism
\[
    \pi_0\GalExt{\cC}{\ZZ/m} \simeq 
    \pi_0\hom(\ZZ/m,\pic(\cC)_{\le1}).
\]

Let $\Sph/\eta$ be the cofiber of the map $\Sigma \Sph \oto{\eta} \Sph$. Since $(\Sph/\eta)_{\le 1} \simeq \ZZ$, we get 
\[
\hom(\ZZ,\pic(\cC)_{\le 1}) \simeq 
\hom(\Sph/\eta,\pic(\cC)_{\le 1}).
\]
Hence, $\hom(\ZZ,\pic(\cC)_{\le 1})$ is the fiber of the map 
\[
    \pic(\cC)_{\le 1} \oto{\eta}
    \Omega\pic(\cC)_{\le 1}
    \qin \Sp^\cn.
\]
By \Cref{eta_dim}, we have
\[
    \pi_0\hom(\ZZ,\pic(\cC)_{\le1}) \simeq 
    \ker(\Pic(\cC)\oto{\dim} (\pi_0\one)^\times) \simeq
    \Pic^\re(\cC)
\]
and we also have
\[
    \pi_1\hom(\ZZ,\pic(\cC)_{\le1}) \simeq 
    \pi_1\pic(\cC)_{\le1} \simeq
    (\pi_0\one)^\times
\]
\[
    \pi_n \hom(\ZZ,\pic(\cC)_{\le1}) = 0 \quad,\quad 
    \forall n\ge2.
\]

Thus, inspecting the long exact sequence in homotopy groups associated with the natural fiber sequence 
\[
    \hom(\ZZ/m,\pic(\cC)_{\le1}) \to 
    \hom(\ZZ,\pic(\cC)_{\le1}) \oto{m}
    \hom(\ZZ,\pic(\cC)_{\le1}),
\]
we get a natural short exact sequence of abelian groups
\[
    0 \to 
    (\pi_0 \one^\times)/(\pi_0 \one^\times)^m \to 
    \pi_0\hom(\ZZ/m,\pic(\cC)_{\le 1}) \to 
    \Pic^\re(\cC)[m] \to 
    0.
\]

Further, since $\hom(\ZZ,\pic(\cC)_{\le 1})$ is a $\ZZ$-module, it splits (non-canonically) as a direct sum
\[
    \hom(\ZZ,\pic(\cC)_{\le1}) \simeq
    \Pic^\re(\cC) \oplus
    \Sigma(\pi_0\one)^\times
\] 
and thus we get a splitting for the above exact sequence.
\end{proof}

The following example shows that \Cref{Kummer_Theory} indeed generalizes classical Kummer theory for field extensions. 
\begin{example}
    For a field $k$ and $\cC = \mathrm{Vect}_k$, we have $\Pic(\cC)=0$. Hence,  if $k$ contains a primitive  $m$-th root of unity, \Cref{Kummer_Cyclic} reduces to the classical fact that the isomorphism classes of $\ZZ/m$-Galois extensions of $k$ are in bijection with the set
    $(k^\times)/(k^\times)^m.$
\end{example}

At the other extreme, we have the following:
    
\begin{example}
    Let $C$ be a smooth projective algebraic curve over an algebraically closed field $k$ whose characteristic is prime to $m$ (and hence, admits primitive $m$-th roots of unity), and let $\cC$ be the category of quasi-coherent sheaves on $C$. We have 
    $(k^\times)/(k^\times)^m = 0$, while 
    $\Pic^\re(\cC)[m]$ is the $m$-torsion of the Jacobian of $C$. 
    In this case, \Cref{Kummer_Cyclic}
    recovers the classification of cyclic $m$-covers of $C$ by the $m$-torsion points on the Jacobian.
\end{example}

\subsubsection{A $\ZZ/2$-variant}

In the case $A=\ZZ/2$, one can carry out the construction of Picard objects out of $\ZZ/2$-Galois extensions with fewer assumptions on the ambient category than in \Cref{Kummer_Cyclic}. For convenience, we shall use here the multiplicative notation $\mu_2=\{\pm1\}$, instead of the additive $\ZZ/2$, for the group of order $2$. For simplicity, we shall assume that all the $\infty$-categories under consideration are stable. 

\begin{defn}\label{R_minus_one}
    Let $\cC\in \calg(\Pr_\st)$ and let $R\in \GalExt{\cC}{\mu_2}$. We denote by $\cl{R}$ the cofiber of the unit map $\one \to R$.  
\end{defn}

When $2$ is invertible in $\cC$, and hence $-1 \in \pi_0 \one$ is a primitive 2-nd root of unity, we have by Kummer theory a splitting
\(
    R\simeq \one \oplus \cl{R},
\)
and furthermore, $\cl{R} \in \Pic(\cC)$ (see the discussion after \Cref{Kummer_Theory}). It turns out that the invertibility of $\cl{R}$ holds regardless of whether $2$ is invertible.

\begin{prop}\label{R_minus_one_Pic}
Let $\cC \in \calg(\Pr_\st)$.
For every $R\in \GalExt{\cC}{\mu_2}$, we have $\cl{R}\in \Pic(\cC)$. 
\end{prop}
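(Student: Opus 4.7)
The main idea is that $\cl{R}$ is dualizable, and becomes invertible (in fact, trivial) after base change along $R$; a Galois descent argument restricted to dualizable objects then suffices, bypassing any need for faithfulness. Since $\cC$ is stable and $R$ is dualizable (being Galois, cf.\ \cite[Proposition 6.2.1]{RognesGal}), the cofiber $\cl{R}$ is dualizable in $\cC$, hence so is $\cl{R}^\vee$ and all relevant tensor products and cofibers. My plan is to verify that the evaluation map $\ev\colon \cl{R} \otimes \cl{R}^\vee \to \one$ is an isomorphism.

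First, I would compute $R \otimes \cl{R} \simeq R$ in $\Mod_R(\cC)$. Since $R \otimes \cl{R}$ is the cofiber of $\eta \otimes \id\colon R \to R \otimes R$, the Galois isomorphism $R \otimes R \iso R \oplus R$ of \Cref{Galois}(2) converts this map into the diagonal $\Delta\colon R \to R \oplus R$, whose cofiber in a stable $\infty$-category is $R$. Thus $R \otimes \cl{R}$ is the unit of $\Mod_R(\cC)$, and dualizing, $R \otimes \cl{R}^\vee \simeq (R \otimes \cl{R})^{\vee_R} \simeq R$ as well.

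Next, I would establish that $R \otimes (-)$ is conservative on dualizable objects of $\cC$. For any dualizable $X \in \cC$, the functor $X \otimes (-) \simeq \hom(X^\vee, -)$ preserves limits, and in particular commutes with homotopy $\mu_2$-fixed points. Consequently, if $X \otimes R \simeq 0$, then
\[
    X \simeq X \otimes \one \simeq X \otimes R^{h\mu_2} \simeq (X \otimes R)^{h\mu_2} \simeq 0,
\]
using the Galois condition $R^{h\mu_2} \simeq \one$ from \Cref{Galois}(1).

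Finally, I would apply this conservativity to $\ev$. The cofiber of $\ev$ is dualizable (as a cofiber of dualizables in a stable category). After base change to $R$, the identifications $R \otimes \cl{R} \simeq R \simeq R \otimes \cl{R}^\vee$ as the unit of $\Mod_R(\cC)$ turn $\ev$ into the canonical self-evaluation of the unit, which is an isomorphism. Hence $\cofib(\ev)$ vanishes after tensoring with $R$; being dualizable, it vanishes in $\cC$ by the previous step, so $\ev$ itself is an isomorphism and $\cl{R} \in \Pic(\cC)$. The main subtlety is that the statement does not assume faithfulness of $R$ (that is, conservativity of $R \otimes (-)$ on all of $\cC$), but the argument only needs conservativity on dualizable objects, which is available for any Galois extension and is precisely the amount of descent needed for a Picard-level statement.
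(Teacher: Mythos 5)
Your proof follows essentially the same route as the paper's: reduce to the split case by base-changing along $R\otimes(-)\colon\cC\to\Mod_R(\cC)$, after noting that $\cl{R}$ is dualizable so it suffices to check that the evaluation map is an isomorphism. The one substantive difference is in how conservativity of $R\otimes(-)$ is handled. The paper simply asserts that $R\otimes(-)$ is ``conservative,'' which in full generality requires the Galois extension to be \emph{faithful}; nothing in the hypotheses of the proposition guarantees that (faithfulness is equivalent to the norm $R_{h\mu_2}\to R^{h\mu_2}$ being an isomorphism, which holds automatically only under an ambidexterity assumption that is not part of the statement). You avoid this by proving the weaker claim that $R\otimes(-)$ is conservative \emph{on dualizable objects}, via $X\simeq X\otimes R^{h\mu_2}\simeq(X\otimes R)^{h\mu_2}$ for dualizable $X$, and you correctly supply the extra observation that makes this suffice — that $\cofib(\ev)$ is itself dualizable, being built from cofibers and duals of dualizable objects in a stable category. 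Your direct computation that $R\otimes\cl{R}\simeq R$ (from the Galois isomorphism turning the unit into the diagonal $R\to R\oplus R$) is the unwound form of the paper's observation that $R\otimes R$ is split-Galois. So your argument is correct, follows the same strategy, and is in fact slightly more careful at the one point where the paper's phrasing implicitly leans on faithfulness.
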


\begin{proof}
If $R$ is split-Galois then
$\cl{R}\simeq \one \in \Pic(\cC)$.
We now reduce the general case to the split case. The object $\cl{R}\in \cC$ is the cofiber of a map between dualizable objects and hence dualizable (see \Cref{Stable_Dualizable}). Hence, it suffices to show that the evaluation map $\cl{R} \otimes \cl{R}^\vee \to \one$ is an isomorphism. This can be checked after applying the conservative symmetric monoidal functor 
\[
R\otimes(-)\colon \cC \to \Mod_R(\cC).
\]
The image of $R$ under this functor is split-Galois, and so the general case follows from the split case.
\end{proof}

We stress, however, that unlike the case where $2$ is invertible in $\cC$, the element $\cl{R}\in \Pic(\cC)$ need not be $2$-torsion. 

\begin{example}[{see \cite[Proposition 5.3.1]{RognesGal}}]
\label{ex_KO_KU}
    We have $\KU \in \GalExt{\Mod_{\KO}(\Sp)}{\mu_2}$. The unit map $\KO \to \KU$ fits into the (non-split) Bott periodicity cofiber sequence
    \[
        \Sigma \KO \oto{\eta} 
        \KO \to 
        \KU \qin \Mod_{\KO}(\Sp). 
    \]
    It follows that $\cl{\KU} \simeq \Sigma^{2}\KO$. Hence, by real Bott periodicity, $\cl{\KU} \in \Pic(\Mod_{\KO}(\Sp))$ is of order $4$.
\end{example}

\begin{war}
    More generally, when $2$ is not invertible in $\cC$, the function
    \[
        \cl{(-)}\colon 
        \pi_0(\GalExt{\cC}{\mu_2}) \to
        \Pic(\cC)
    \]
    need \emph{not} be a group homomorphism with respect to the group structure on the source given by \Cref{Galois_Group}.
\end{war}

\section{Higher Cyclotomic Theory}

In this section, we define and study ``higher'' cyclotomic extensions in the setting of higher semiadditive stable $\infty$-categories. These are the higher (semiadditive) height analogues of the cyclotomic extensions of \Cref{Def_Cyclo_Ext}. 
We shall work primarily in $\Prl_\st^{\sad{n}} \sseq \Prl$ for some $n\ge0$, which is the full subcategory of $\Prl$, spanned by stable $n$-semiadditive $\infty$-categories. We also fix an implicit prime $p$, with respect to which one can consider semiadditive height. We recall from \cite[Theorem C]{AmbiHeight}, that every $\infty$-category in $\Prl_\st^{\sad{n}}$ splits into a product of $\infty$-categories according to height. Moreover, the finite height factors are $\infty$-semiadditive (\cite[Theorem A]{AmbiHeight})\footnote{To be precise, the height $n=0$ factor is only \textit{$p$-typically} $\infty$-semiadditive.}. We shall mainly concentrate on the full subcategory $\Prl_{\tsadi_n} \sseq \Prl_\st^{\sad{n}}$ of those $\infty$-categories which are of height $n$.

We begin in \S4.1, by discussing primitive higher roots of unity (\Cref{Def_Higher_Roots_Of_Unity}), and continue in \S4.2, with the higher cyclotomic extensions which corepresent them (\Cref{Def_Higher_Cyclo} and \Cref{Cyclo_Rep_Primitive}).

\subsection{Higher Roots of Unity}

In \Cref{Def_Roots_Of_Unity}, we have recalled the space $\mu_m(R)$ of $m$-th roots of unity of a commutative algebra object $R$ in a symmetric monoidal $\infty$-category $\cC$.
By decomposing $m$ into a product of distinct prime powers
\(
    m=p_{1}^{r_{1}}\cdots p_{s}^{r_{s}},
\)
we obtain a decomposition of the functor $\mu_{m}\colon \calg(\cC) \to \Spc$ into a product
\[
    \mu_{m}\simeq
    \mu_{p_{1}^{r_{1}}}\times \cdots \times
    \mu_{p_{s}^{r_{s}}}.
\]
We may thus restrict attention to the case $m=p^{r}$. 
While the definition of $p^r$-th roots of unity is rather general, the notion of \emph{primitive} roots behaves well only when $R$ is \emph{$p$-divisible}, in which case $\mu_{p^r}(R)$ is \emph{discrete} (see \Cref{Roots_Discrete}). 
In the terminology of \cite[Definition 3.1.6]{AmbiHeight}, the condition that $R$ is $p$-divisible, amounts to $R$ having (semiadditive) height $0$. 
More generally, when $\cC$ is higher semiadditive,
the properties of the construction $\mu_{p^{r}}(R)$ turn out to be closely related to the height of $R$. 
To begin with,
\begin{prop}\label{Higher_Roots_Trunc}
    Let $\cC\in\calg(\Pr_{\st}^{\sad n})$
    and let $R\in\calg(\cC)$. If $R$ is of height $\le n$,
    then for all $r\in\NN$ the space $\mu_{p^{r}}(R)$ is $n$-truncated.
\end{prop}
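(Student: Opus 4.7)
My plan is to reduce the statement to a concrete calculation on the homotopy groups of a fiber sequence, and then to extract truncation above degree $n$ from the height hypothesis via the higher semiadditive structure.

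First I would reduce to the case $r=1$ by induction on $r$. The short exact sequence $0 \to C_p \to C_{p^r} \to C_{p^{r-1}} \to 0$ in $\Ab$ gives rise to a cofiber sequence $HC_p \to HC_{p^r} \to HC_{p^{r-1}}$ in $\Sp^\cn$ (since $\Ab \hookrightarrow \Sp^\cn$ preserves such extensions). Mapping into $R^{\times}$ turns this into a fiber sequence of spaces
\[
    \mu_{p^{r-1}}(R) \longrightarrow \mu_{p^r}(R) \longrightarrow \mu_p(R).
\]
If the outer two terms are $n$-truncated, then so is the middle, so by induction it suffices to treat $r=1$.

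For the base case $r=1$, I would exploit the cofiber sequence $H\ZZ \xrightarrow{p} H\ZZ \to HC_p$ in $\Sp^\cn$. Since by the adjunction $\one[-] \dashv (-)^{\times}$ and the identification $\one[H\ZZ] \simeq \one[t^{\pm 1}]$, one has the natural homotopy equivalence $\Map_{\Sp^\cn}(H\ZZ, R^{\times}) \simeq \Omega^\infty R^{\times}$, mapping the cofiber sequence into $R^{\times}$ realizes $\mu_p(R)$ as the fiber of the $p$-th power map:
\[
    \mu_p(R) \simeq \fib\bigl(\Omega^\infty R^{\times} \xrightarrow{\;(-)^p\;} \Omega^\infty R^{\times}\bigr).
\]
From the long exact sequence of this fiber, the group $\pi_k \mu_p(R)$ for $k \ge 1$ fits into a short exact sequence controlled by the cokernel and kernel of the $p$-th power map on $\pi_{k+1}R^{\times}$ and $\pi_k R^{\times}$ respectively. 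The goal becomes to force both sides of this sandwich to vanish whenever $k>n$.

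This is where I expect the main obstacle to lie: translating the algebraic hypothesis that $|B^nC_p|$ is invertible in $\pi_0 R$ into a statement about the $p$-th power map on higher homotopy of $R^{\times}$. The plan is to use the $n$-semiadditive structure to interpret integration along $B^nC_p$ as a transfer-type operation acting on $\pi_* R$; the height $\le n$ hypothesis makes this transfer invertible, and via the Eckmann--Hilton comparison of the multiplicative and additive group structures on the higher homotopy of $\Omega^\infty R^{\times}$, this should upgrade to an invertibility of the $p$-th power map on $\pi_k R^{\times}$ for $k \ge n+1$. Once that is in hand, the short exact sequence above degenerates and the conclusion follows. I would also cross-check the low-dimensional cases $n = 0$ (where height $\le 0$ means $p \in (\pi_0 R)^\times$ and $\mu_{p^r}(R)$ is the discrete set $\mu_{p^r}(\pi_0 R)$, recovering \Cref{Roots_Discrete}) and $n=1$ as sanity checks before writing up the general argument.
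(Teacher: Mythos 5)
Your proposal breaks at the $r=1$ base case, for two separate reasons, and I'll also explain how the paper's proof sidesteps both.

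\paragraph{The identification of the middle term is wrong.} You claim $\Map_{\Sp^\cn}(H\ZZ, R^\times) \simeq \Omega^\infty R^\times$ via the adjunction and ``$\one[H\ZZ]\simeq\one[t^{\pm1}]$''. But $\Omega^\infty R^\times = \Map_{\Sp^\cn}(\Sph, R^\times)$, since $\Sph$ (not $H\ZZ$) is the free connective spectrum on a point. The connective spectrum $R^\times$ carries no $H\ZZ$-module structure, so $\Map_{\Sp^\cn}(H\ZZ, R^\times)$ is genuinely different from $\Omega^\infty R^\times$; in particular $\mu_p(R)=\Map_{\Sp^\cn}(HC_p,R^\times)$ is \emph{not} the fiber of the $p$-th power map on $\Omega^\infty R^\times$ --- the latter is $\Map_{\Sp^\cn}(\Sph/p,R^\times)$, and $\Sph/p\to HC_p$ is far from an equivalence. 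Your ``Laurent polynomial'' heuristic conflates the strict group algebra $\one[\ZZ]$ with the free $E_\infty$-ring on an invertible element $\one[\Sph]$; these corepresent different functors.

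\paragraph{The mechanism you want would be false even if the identification were fixed.} The plan was to extract from height $\le n$ the invertibility of $p$ on $\pi_k R^\times = \pi_k R$ for $k > n$. This cannot hold: take $\cC=\Sp_{K(n)}$ and $R=\Sph_{K(n)}$, which has height exactly $n$; its homotopy groups above degree $n$ contain plenty of $p$-torsion. For $n\ge 1$ the height hypothesis precisely \emph{fails} to make $p$ invertible in $\pi_0 R$ (that is the $n=0$ condition); what it makes invertible is $|B^nC_p|$, a very different element. The $n=0$ cross-check you mention is exactly the one case where ``height $\le n$ $\Rightarrow$ $p$ invertible'' is true, which is why it looks plausible; it does not generalize.

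\paragraph{What the paper does instead.} The paper never unwinds homotopy groups of $R^\times$. It uses the adjunction $\one[-]\dashv (-)^\times$ in the opposite direction to move the problem into $\calg_R(\cC)$:
\[
    \Omega^{n+1}\mu_{p^r}(R) \simeq
    \Map_{\Sp^\cn}(B^{n+1}C_{p^r}, R^\times) \simeq
    \Map_{\calg(\cC)}(\one[B^{n+1}C_{p^r}],R) \simeq
    \Map_{\calg_R(\cC)}(R[B^{n+1}C_{p^r}],R),
\]
and then invokes \cite[Proposition 2.4.7]{AmbiHeight}: height $\le n$ forces $R[B^{n+1}C_{p^r}]\simeq R$, so the last space is $\Map_{\calg_R(\cC)}(R,R)\simeq \pt$. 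Since $\mu_{p^r}(R)$ is the underlying space of a connective spectrum, all components are equivalent, and contractibility of $\Omega^{n+1}$ gives $n$-truncation. The crucial move --- which your draft lacks --- is re-encoding the height hypothesis as a statement about the $R$-algebra $R[B^{n+1}C_{p^r}]$ rather than trying to extract divisibility on homotopy groups of $R$ itself. (Your reduction to $r=1$ is correct but also unnecessary: the cited lemma handles all $r$ at once.)
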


\begin{proof}
    By \cite[Proposition 2.4.7]{AmbiHeight}, we have $R[B^{n+1}C_{p^r}]\simeq R$. We thus get a sequence of isomorphisms 
    \[
        \Omega^{n+1}\mu_{p^r}(R) \simeq \Omega^{n+1}\Map_{\calg(\Spc)}(C_{p^r},R^\times) \simeq\Map_{\calg(\Spc)}(B^{n+1}C_{p^r},R^\times) \simeq 
    \]
    \[
        \Map_{\calg(\cC)}(\one[B^{n+1}C_{p^r}],R) \simeq
        \Map_{\calg_{R}(\cC)}(R[B^{n+1}C_{p^r}],R) \simeq
        \Map_{\calg_{R}(\cC)}(R,R) \simeq
        \pt.
    \]
    Since all connected components of the space $\mu_{p^r}(R)$ are isomorphic, it follows that it is $n$-truncated.
\end{proof}

As we shall demonstrate, when $R$ is of height exactly $n$, the set $\pi_n(\mu_{p^r}(R))$ serves as a good substitute for the set $\pi_0(\mu_{p^r}(R))$ of ordinary $p^r$-th roots of unity of $R$. 
With that in mind, we introduce the following generalization of \Cref{Def_Roots_Of_Unity}:

\begin{defn}[Higher Roots of Unity]
\label{Def_Higher_Roots_Of_Unity}
    Let $\cC\in\calg(\Pr_{\st}^{\sad n})$ and let $R\in\calg(\cC)$. For every prime $p$ and $r\in\NN$,
    \begin{enumerate}
        \item We define the space of \textbf{$p^r$-th roots of unity of height $n$} in $R$ to be 
        \[
            \mu^{(n)}_{p^r}(R) := 
            \Omega^n\mu_{p^r}(R) \simeq
            \Map_{\Sp^\cn}(C_{p^r},\Omega^n R^\times).
        \]
        \item  We say that a higher root of unity
            $C_{p^r}\oto{\omega} \Omega^n R^\times$ is \textbf{primitive}, if $R$ is of height $n$ and the only commutative $R$-algebra $S$ 
            for which there exists a dotted arrow rendering the diagram of spectra
            \[
                \xymatrix{
                    C_{p^r}\ar@{->>}[d]\ar[rr] &  & 
                    \Omega^n R^{\times}\ar[d] \\
                    C_{p^{r-1}}\ar@{-->}[rr] &  & 
                    \Omega^n S^{\times}
                }
            \]
            commutative, is $S=0$. 
            We denote by 
            $\mu_{p^r}^{(n),\prim}(R) \sseq \mu_{p^r}^{(n)}(R)$ 
            the union of connected components of height $n$ primitive $p^r$-th roots of unity.
    \end{enumerate}
    By convention, a height $n$ (primitive) $p^r$-th root of unity of $\cC$ is a height $n$ (primitive) $p^r$-th root of unity of $\one_\cC$.
\end{defn}

The (higher) $p^r$-th roots of unity for various $r$-s are interrelated in two ways. First, for all $k\le r$, the \emph{surjective} group homomorphisms 
\(
    C_{p^r} \onto C_{p^k}
\)
induce, by pre-composition, natural transformations
\[
    \mu^{(n)}_{p^k}(R) \simeq
    \Map(C_{p^k}, \Omega^n R^\times) \to
    \Map(C_{p^r}, \Omega^n R^\times) \simeq 
    \mu^{(n)}_{p^r}(R).
\]
We can think of this as the inclusion of the (higher) $p^k$-th roots of unity into the (higher) $p^r$-th roots of unity.
Second, the \emph{injective} group homomorphisms
$C_{p^{r-k}} \into C_{p^r}$
induce, by pre-composition, natural transformations
\[
    (-)^{p^k}\colon
    \mu^{(n)}_{p^r}(R) \simeq
    \Map(C_{p^r}, \Omega^n R^\times) \to
    \Map(C_{p^{r-k}}, \Omega^n R^\times) \simeq 
    \mu^{(n)}_{p^{r-k}}(R).
\]
We can think of this as raising a (higher) $p^r$-th root of unity to the $p^k$-th power to get a (higher) $p^{r-k}$-th root of unity. 

\begin{prop}\label{Primitive_Power}
    Let $\cC\in\calg(\Pr_{\tsadi_n})$ and let $R\in \calg(\cC)$. For $0\le k<r$, a higher root of unity 
    $\omega \in \mu^{(n)}_{p^r}(R)$ 
    is primitive, if and only if
    $\omega^{p^k} \in \mu^{(n)}_{p^{r-k}}(R)$
    is primitive.
\end{prop}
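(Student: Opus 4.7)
The strategy is to reduce both primitivity conditions to the same universal non-nullity condition on the restriction of $\omega$ to the subgroup $C_p \subseteq C_{p^r}$. First note that primitivity of either $\omega$ or $\omega^{p^k}$ demands $R$ be of height $n$ as part of its definition, so the two conditions are at any rate vacuously equivalent unless this holds; we may thus assume throughout that $R$ has height $n$.

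The short exact sequence $0 \to C_p \to C_{p^{r-j}} \to C_{p^{r-j-1}} \to 0$ of abelian groups (viewed as Eilenberg--MacLane connective spectra) is a cofiber sequence in $\Sp^\cn$. Consequently, for any $R$-algebra $S$, a map $C_{p^{r-j}} \to \Omega^n S^\times$ admits a factorization through the quotient $C_{p^{r-j-1}}$ if and only if its restriction along the kernel inclusion $C_p \hookrightarrow C_{p^{r-j}}$ is nullhomotopic. Applying this with $j=0$ and unwinding \Cref{Def_Higher_Roots_Of_Unity}(2), $\omega$ is primitive exactly when the restriction $\iota^*\omega \colon C_p \to \Omega^n R^\times$ along $\iota\colon C_p \hookrightarrow C_{p^r}$ is \emph{universally non-null}, meaning the only $R$-algebra $S$ on which $\iota^*\omega_S$ becomes null is $S=0$. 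Applying it with $j = k$, primitivity of $\omega^{p^k}$ becomes the analogous universal non-nullity of $\iota'^*\omega^{p^k}$, where $\iota'\colon C_p \hookrightarrow C_{p^{r-k}}$ is the kernel of $C_{p^{r-k}} \to C_{p^{r-k-1}}$.

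It therefore suffices to identify $\iota^*\omega$ with $\iota'^*\omega^{p^k}$ as maps of spectra. Recall that $\omega^{p^k}$ is obtained from $\omega$ by pre-composition along the injection $j\colon C_{p^{r-k}} \hookrightarrow C_{p^r}$ corresponding to multiplication by $p^k$ on generators. Choosing compatible generators, $\iota'$ sends $1 \in C_p$ to $p^{r-k-1} \in C_{p^{r-k}}$, and $j$ then sends this to $p^k \cdot p^{r-k-1} = p^{r-1} \in C_{p^r}$, which is precisely the generator of the image of $\iota$. Hence $j \circ \iota' = \iota$, and so $\iota'^*\omega^{p^k} = \iota'^* j^* \omega = \iota^*\omega$. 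The two universal non-nullity conditions therefore literally coincide, concluding the proof.

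The only nontrivial input is the clean identification of the kernel subgroup $C_p \subseteq C_{p^r}$ with the composition $C_p \hookrightarrow C_{p^{r-k}} \hookrightarrow C_{p^r}$, which is the essentially the arithmetic identity $p^k \cdot p^{r-k-1} = p^{r-1}$; this is what requires the hypothesis $k < r$ (so that the intermediate exponent $r-k-1$ is nonnegative). The rest is a direct application of the cofiber-sequence interpretation of "factoring through a quotient" in connective spectra, which requires no hypotheses beyond stability of $\cC$.
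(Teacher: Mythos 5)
Your proof is correct and is an explicit unpacking of the paper's one-line argument, which simply cites that the square with horizontal inclusions $C_{p^{r-k}}\hookrightarrow C_{p^r}$ and $C_{p^{r-k-1}}\hookrightarrow C_{p^{r-1}}$ and vertical quotient maps is a pushout of connective spectra. The identity $j\circ\iota'=\iota$ you isolate (i.e., $p^k\cdot p^{r-k-1}=p^{r-1}$) is precisely what makes that square cocartesian --- it says the two vertical surjections have the same fiber $C_p$ --- so your reduction of both primitivity conditions to the non-vanishing of the common restriction to $C_p$ is the same mechanism viewed from the fiber side.
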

\begin{proof}
    This follows from the definition of primitivity (\Cref{Def_Higher_Roots_Of_Unity}) and the fact that we have a pushout diagram in $\Sp^\cn$ of the form
    \[
        \xymatrix{C_{p^{r-k}}\ar@{->>}[d]\ar@{^{(}->}[r] &
        C_{p^r}\ar@{->>}[d]\\
        C_{p^{r-k-1}}\ar@{^{(}->}[r] & 
        C_{p^{r-1}}
        }
    \]
\end{proof}

\subsection{Higher Cyclotomic Extensions}

\subsubsection{Definition and properties}
We shall now mimic the construction of cyclotomic extensions, which corepresent primitive roots of unity, to produce \emph{higher} cyclotomic extensions, which corepresent primitive \emph{higher} roots of unity. 
For $\cC \in \calg(\Pr_{\st})$ and a fixed $r\in\NN$, the functor 
\[
    \mu_{p^{r}}^{(n)}\colon \calg(\cC) \to \Spc
\]
is corepresented by the group algebra
$\one[B^{n}C_{p^r}].$ 
The group homomorphism $q\colon C_{p^r} \onto C_{p^{r-1}}$ induces a map of commutative groups in spaces
$q_n\colon B^nC_{p^r} \to B^nC_{p^{r-1}}$
and hence a map of group algebras
\[
    \overline{q}_n\colon
    \one[B^n C_{p^r}] \to 
    \one[B^n C_{p^{r-1}}] 
    \qin \calg(\cC).
\]
The map $\overline{q}_n$ corepresents the inclusion 
$\mu^{(n)}_{p^{r-1}}(R) \into \mu^{(n)}_{p^r}(R)$ discussed above. 
The key point is that 
if $\cC$ is higher semiadditive of \textit{height $n$}, then we can realize $\overline{q}_n$ as a splitting of an idempotent in 
$\pi_0(\one[B^nC_{p^r}])$. 
To translate between local systems and modules we need the following general fact, which seems to be well known, but for which we could not find a reference in the literature. 

\begin{prop}\label{Locsys_Mod}
    Let $\cC\in\calg(\Prl)$ and let $B$ be a pointed connected space. There is a natural equivalence of $\cC$-linear $\infty$-categories
    \[
        \cC^B \simeq \LMod_{\one[\Omega B]}(\cC)
        \qin \Mod_\cC(\Prl).
    \]
\end{prop}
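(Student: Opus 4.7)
The plan is to first establish the equivalence in the universal case $\cC = \Spc$ by monadic descent, and then extend to an arbitrary $\cC \in \calg(\Prl)$ by $\Prl$-linearity.

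\textbf{Step 1 (the universal case).} Let $p\colon \pt \to B$ be the inclusion of the basepoint. Pullback along $p$ gives $p^*\colon \Spc^B \to \Spc$, which evaluates a local system at the basepoint. This functor preserves all small colimits (colimits in $\Spc^B$ are computed pointwise), admits a left adjoint $p_!$ given by left Kan extension, and is conservative because $B$ is connected: any path from the basepoint to a point $b\in B$ induces an equivalence between the stalk at $\ast$ and the stalk at $b$, functorially in the local system. Applying the Barr--Beck--Lurie theorem (\cite[Theorem 4.7.3.5]{HA}), $p^*$ exhibits $\Spc^B$ as the $\infty$-category of modules over the monad $T := p^*p_!$. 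A direct computation of $p_!$ at the basepoint gives
\[
T(X) \;\simeq\; (\pt\times_B \pt) \times X \;\simeq\; \Omega B \times X,
\]
which is exactly the free left-module monad for the associative (group-like) monoid $\Omega B$ in $\Spc$. Hence
\[
\Spc^B \;\simeq\; \LMod_{\Omega B}(\Spc)\qin \Prl.
\]

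\textbf{Step 2 (from $\Spc$ to $\cC$).} The equivalence of Step 1 is an equivalence in $\Prl = \Mod_{\Spc}(\Prl)$, and is natural in $B$. Tensoring over $\Spc$ with a given $\cC \in \calg(\Prl)$ and using the canonical identifications
\[
\cC \otimes \Spc^B \;\simeq\; \cC^B,\qquad
\cC \otimes \LMod_{\Omega B}(\Spc) \;\simeq\; \LMod_{\one_\cC[\Omega B]}(\cC),
\]
one obtains the desired equivalence in $\Mod_\cC(\Prl)$. The first identification is standard (see, e.g., \cite[Remark 4.8.1.8]{HA}). The second is an instance of base change for module categories in $\Prl$: for an associative algebra $A$ in $\Spc$, tensoring $\LMod_A(\Spc)$ with $\cC$ over $\Spc$ yields $\LMod_{A_\cC}(\cC)$, where $A_\cC$ is the image of $A$ under the unique symmetric monoidal colimit-preserving functor $\Spc \to \cC$. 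This image is by definition the group algebra $\one_\cC[\Omega B]$.

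\textbf{Main obstacle.} The conceptual steps are straightforward; the main technical subtlety lies in the $\cC$-linearity assertion, and more precisely in the compatibility
\[
\cC \otimes \LMod_A(\Spc) \;\simeq\; \LMod_{\one_\cC[A]}(\cC)
\]
as an equivalence of $\cC$-module $\infty$-categories. Establishing this cleanly requires a careful application of the base-change theorem for module $\infty$-categories (as in \cite[Theorem 4.8.4.6]{HA}) and the fact that the assignment $A \mapsto \LMod_A$ is functorial in symmetric monoidal colimit-preserving functors of presentable $\infty$-categories. Once this is in place, everything else is formal.
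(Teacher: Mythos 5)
Your proof is correct and follows the same two-step strategy as the paper: establish the case $\cC = \Spc$ by a monadicity argument, then tensor with $\cC$ in $\Prl$ using base change for module $\infty$-categories. The only small wrinkle is in Step~1: after computing the underlying endofunctor $T = p^*p_! \simeq \Omega B \times (-)$, you assert outright that it \emph{is} the free $\Omega B$-module monad, but an equivalence of endofunctors does not automatically transport the monad structure --- one should at least remark that the adjunction-induced multiplication on $T(\pt) \simeq \Omega B$ agrees with loop concatenation. The paper sidesteps this by applying \cite[Proposition 4.8.5.8]{HA} to the conservative, symmetric monoidal left adjoint $e^*\colon \Spc^B \to \Spc$, which directly produces $\Spc^B \simeq \LMod_{\End(M)}(\Spc)$ with $M = e_!(\pt)$ and its algebra structure built in, reducing matters to the identification $\End(M) \simeq \Omega B$ under $\Spc^B \simeq \Spc_{/B}$; your monad computation is the same content unpacked by hand. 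Step~2 matches the paper exactly, which cites \cite[Proposition 4.8.1.17]{HA} for $\cC\otimes\Spc^B \simeq \cC^B$ and \cite[Theorems 4.8.4.6 and 4.8.5.16]{HA} for the base change $\cC\otimes\LMod_{\Omega B}(\Spc) \simeq \LMod_{\one[\Omega B]}(\cC)$.
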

\begin{proof}
    We first consider the case $\cC = \Spc$. Let $\pt\oto{e} B$ be the base point of $B$ and let $M=e_!(\pt)$ in $\Spc^B$. The functor $F_M\colon \Spc \to \Spc^B$, which is given by point-wise product with $M$, is left adjoint to the pullback functor
    \(
        e^* \colon \Spc^B \to \Spc.
    \) 
    Since $e^*$ is itself a symmetric monoidal, conservative left adjoint, it follows from \cite[Proposition 4.8.5.8]{HA}, that $\Spc^B$ is equivalent to $\LMod_{\End(M)}(\Spc).$ 
    Finally, under the Grothendieck construction equivalence $\Spc^B \simeq \Spc_{/B}$, the object $M=e_!(\pt)$ corresponds to $\pt\oto{e}B$ and its endomorphisms are given by $\Omega B \in \alg_{\EE_1}(\Spc)$. For a general $\cC \in \calg(\Prl)$, we shall deduce the claim by tensoring the equivalence
    \[
        \Spc^B \simeq \LMod_{\Omega B}(\Spc)
        \qin \Prl
    \]
    with $\cC$ in $\Prl$. Indeed, it follows from \cite[Proposition 4.8.1.17]{HA} that 
    \(
        \cC \otimes \Spc^B \simeq \cC^B,
    \)
    and from \cite[Theorems 4.8.4.6 and 4.8.5.16]{HA} that
    \[
        \cC \otimes \LMod_{\Omega B}(\Spc) \simeq
        \LMod_{\Omega B}(\cC) \simeq
        \LMod_{\one[\Omega B]}(\cC).
    \]
\end{proof}

This allows us to use the results of  \cite[\S4.3]{AmbiHeight}, to deduce the following:
\begin{prop}
\label{Transfer_Idempotent}
    Let $\cC\in\calg(\Pr_{\tsadi_n})$. There exists an idempotent 
    $\varepsilon \in \pi_0(\one[B^nC_{p^r}]),$ 
    such that
    \[
        \one[B^nC_{p^r}][\varepsilon^{-1}] \simeq
        \one[B^nC_{p^{r-1}}],
    \]
    and under this isomorphism, the canonical map $\one[B^nC_{p^r}]
    \to \one[B^nC_{p^r}][\varepsilon^{-1}]$ 
    is identified with $\overline{q}_n$.
\end{prop}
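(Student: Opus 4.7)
The plan is to construct the idempotent $\varepsilon$ explicitly, following the formula sketched in the introduction, and then verify its properties by invoking the general framework of \cite[\S4.3]{AmbiHeight}.

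For the construction, applying $B^n$ to the short exact sequence $C_p\hookrightarrow C_{p^r}\twoheadrightarrow C_{p^{r-1}}$ yields a fiber sequence of pointed connected spaces
\[
B^n C_p \oto{\, f \,} B^n C_{p^r} \oto{\, q_n \,} B^n C_{p^{r-1}}.
\]
Let $\iota\colon B^n C_p \to \Omega^\infty\one[B^n C_{p^r}]$ denote the composite of $f$ with the canonical unit $B^n C_{p^r} \to \Omega^\infty\one[B^n C_{p^r}]$. Since $\cC\in\Prl_{\tsadi_n}$ is of semiadditive height $n$, the cardinality $|B^n C_p|\in\pi_0\one$ is invertible, and since $\cC$ is moreover $\infty$-semiadditive at finite height, the semiadditive integral is defined, and we set
\[
\varepsilon := \frac{1}{|B^n C_p|}\int_{x\in B^n C_p}\iota(x) \;\in\; \pi_0(\one[B^n C_{p^r}]).
\]

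For the verification, by \Cref{Locsys_Mod} we have $\cC^{B^{n+1}C_{p^r}}\simeq\LMod_{\one[B^n C_{p^r}]}(\cC)$, and similarly for $p^{r-1}$ in place of $p^r$; under this equivalence the map $\overline{q}_n$ of group algebras corresponds to the $\cC$-linear restriction functor along $B^{n+1}q\colon B^{n+1}C_{p^r}\to B^{n+1}C_{p^{r-1}}$. The once-delooped fiber sequence $B^{n+1}C_p\to B^{n+1}C_{p^r}\to B^{n+1}C_{p^{r-1}}$ consists of $\cC$-ambidextrous $\pi$-finite spaces (using $\infty$-semiadditivity at finite height), with fiber $B^{n+1}C_p$ of invertible cardinality. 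The results of \cite[\S4.3]{AmbiHeight} treat precisely such a situation: an ambidextrous fiber sequence whose fiber has invertible dimension produces a splitting of the corresponding restriction functor as a localization at an idempotent built from the semiadditive integral of the fiber inclusion. Invoking this yields both the idempotence of $\varepsilon$ and the desired identification of the localization map with $\overline{q}_n$.

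The main obstacle is matching the explicit integral formula for $\varepsilon$ with the abstract idempotent produced by the general machinery. Concretely, one has to analyze the Beck--Chevalley transformation $q_n^{*}q_{n,!}\one \to \one$ associated with the pullback square whose horizontal fiber is $B^n C_p$, and show that under \Cref{Locsys_Mod} it corresponds, at the level of $\pi_0(\one[B^n C_{p^r}])$, to multiplication by $\int_{B^n C_p}\iota$. This is a direct but delicate computation combining the standard properties of higher semiadditive integrals with base change; it is the only point where genuine calculation, rather than structural reasoning, is required.
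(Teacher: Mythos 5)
Your route is structurally the same as the paper's: both go through \Cref{Locsys_Mod} to translate the statement into the language of local systems, and both lean on \cite[\S4.3]{AmbiHeight} applied to the delooped fiber sequence $B^{n+1}C_p\to B^{n+1}C_{p^r}\to B^{n+1}C_{p^{r-1}}$ to produce the splitting. The key ingredients are correctly identified.

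However, your organization creates a gap that you notice but do not close. You \emph{define} $\varepsilon$ as the normalized integral $\frac{1}{|B^nC_p|}\int_{B^nC_p}\iota$, and then need to prove, separately, that this element is idempotent and that inverting it recovers $\overline{q}_n$. In your last paragraph you correctly diagnose this as "the main obstacle" — matching the explicit integral with the abstract idempotent via a Beck--Chevalley/base-change analysis — but you leave it as an unproved "direct but delicate computation." As written, the proposal therefore does not establish any of the three assertions of the proposition. The paper avoids this entirely by reversing the logic: under \Cref{Locsys_Mod}, restriction along $\overline{q}_n$ is identified with $q_{n+1}^{*}$, and \cite[Theorem 4.3.2]{AmbiHeight} supplies a product decomposition of $\cC^{B^{n+1}C_{p^r}}$ with $\cC^{B^{n+1}C_{p^{r-1}}}$ as one factor. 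The paper then \emph{defines} $\varepsilon$ as (the scalar corresponding to) the idempotent natural endomorphism of the identity projecting onto that factor; idempotence and the identification with $\overline{q}_n$ are then immediate from the decomposition. The explicit integral formula — which you took as the starting point — is the content of \cite[Proposition 4.3.4]{AmbiHeight} and in the paper is stated only afterwards as a description of $\varepsilon$, not as its definition. If you instead define $\varepsilon$ abstractly and invoke Proposition 4.3.4 for the formula, your argument closes cleanly.
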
 
\begin{proof}
    By the naturality of the equivalence of $\infty$-categories in \Cref{Locsys_Mod} 
    \[
        \cC^{B^{n+1}C_{p^r}} \simeq
        \Mod_{\one[B^nC_{p^r}]}(\cC),
    \]
    restriction of scalars along $\overline{q}_n$ is identified with the functor 
    $q_{n+1}^*\colon \cC^{B^{n+1}C_{p^{r-1}}} \to \cC^{B^{n+1}C_{p^{r}}}.$
    By \cite[Theorem 4.3.2]{AmbiHeight}, this induces an equivalence of $\infty$-categories 
    \[
        \cC^{B^{n+1}C_{p^r}} \iso 
        \cC^{B^{n+1}C_{p^{r-1}}} \times (\cC^{B^{n+1}C_{p^{r-1}}})^\perp,   
    \]
    where 
    $(\cC^{B^{n+1}C_{p^{r-1}}})^\perp \sseq \cC^{B^{n+1}C_{p^r}}$ 
    is the full subcategory spanned by the objects $X$, for which $(q_{n+1})_*X=0$. 
    Let 
    \[
        \varepsilon\colon \Id_{\cC^{B^{n+1}C_{p^r}}} \to \Id_{\cC^{B^{n+1}C_{p^r}}}
    \]
    be the idempotent natural endomorphism which projects onto the essential image of  $\cC^{B^{n+1}C_{p^{r-1}}}$ under the functor $q_{n+1}^*$. This corresponds to a natural endomorphism
    \[
        \overline{\varepsilon}\colon \Id_{\Mod_{\one[B^{n}C_{p^r}]}(\cC)} \to \Id_{\Mod_{\one[B^{n}C_{p^r}]}(\cC)},
    \]
    which evaluates at $\one[B^{n}C_{p^r}]$ to an idempotent element in the commutative ring $\pi_0(\one[B^nC_{p^r}])$. By construction, the decomposition
    \[ 
        \one[B^nC_{p^r}] \iso \one[B^nC_{p^r}][\overline{\varepsilon}^{-1}] \times 
        \one[B^nC_{p^r}][(1-\overline{\varepsilon})^{-1}]
    \]
    identifies the projection onto the first factor with $\overline{q}_n$.
\end{proof}

In \cite[Proposition 4.3.4]{AmbiHeight}, we have also provided an explicit description of the idempotent $\varepsilon$ of \Cref{Transfer_Idempotent} in the language of local systems. Translating into the language of rings, we get the following description of $\varepsilon \in \pi_0(\one[B^nC_{p^r}])$, in terms of the higher semiadditive structure of $\cC$. The fiber of 
$q_n\colon B^nC_{p^r} \to B^nC_{p^{r-1}}$ is isomorphic to $B^nC_p$, and we thus get a map of spaces 
$\iota \colon B^nC_p \to \Omega^\infty \one[B^nC_{p^r}].$
The idempotent $\varepsilon$ can be identified with the ``average of $\iota$'', in the sense that
\[
    \varepsilon = 
    \frac{1}{|B^nC_p|}\int_{B^nC_p}\iota \qin
    \pi_0(\one[B^nC_{p^r}]).
\]
When $n=0$, we recover the classical formula of \Cref{Def_Cyclo_Ext}, for the case $m=p^r$ (see also \cite[Example 4.3.3]{AmbiHeight}).

\begin{rem}
    Consider the $\infty$-group $G=B^nC_p$
    and the canonical maps
    \[
        f \colon
        \one[G]^{hG} \to \one[G]
    \]
    and
    \[
        g \colon 
        \one[G] \to 
        \one[G]_{hG} \simeq
        \one.
    \]
    It can be shown that if $\cC$ is $\infty$-semiadditive of height $n$ (and hence in particular $G$ is \textit{$\cC$-stably dualizable} in the sense of \cite[Definition 2.3.1]{rognes2005stably}), then
    $h = g\circ f$ is invertible and $f\circ h^{-1} = \varepsilon$.
    In the case $\cC = \Sp_{K(n)}$, the fact that $h$ is invertible, which suffices for the construction of $\varepsilon$, was first observed in \cite[Example 5.4.6]{rognes2005stably}. We thank John Rognes for explaining to us this alternative description of $\varepsilon$. 
\end{rem}

We are now ready to give the main definition of the paper.
\begin{defn}[Higher Cyclotomic Extensions]\label{Def_Higher_Cyclo}
Let $\cC\in\calg(\Pr_{\tsadi_n})$. 
For every integer $r \ge 1$, we define 
\[
    \cyc{p^r}{n} := \one[B^nC_{p^r}][(1-\varepsilon)^{-1}]
    \qin \calg(\cC),
\]
where $\varepsilon \in \pi_0(\one[B^nC_{p^r}])$ is the idempotent provided by \Cref{Transfer_Idempotent}. For every $R\in\calg(\cC)$, we define 
\[
    \cyc[R]{p^r}{n} :=  R\otimes\cyc{p^r}{n}
    \qin \calg_R(\cC).
\]
We refer to it as the (height $n$) $p^r$-th \textbf{cyclotomic extension} of $R$.
\end{defn}

As promised, the higher cyclotomic extensions indeed corepresent the higher primitive roots:

\begin{prop}\label{Cyclo_Rep_Primitive}
Let $\cC\in\calg(\Pr_{\tsadi_n})$. 
The object $\cyc{p^r}{n}\in\calg(\cC)$
corepresents the functor 
\[
    \mu^{(n),\prim}_{p^r}\colon\calg(\cC)\to
    \Set \sseq \Spc.
\] 
\end{prop}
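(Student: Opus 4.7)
The plan is to use the adjunction $\one[-] \dashv (-)^\times$ to rephrase the universal property in algebraic terms, and then to interpret primitivity via the idempotent $\varepsilon$ of \Cref{Transfer_Idempotent}. The adjunction yields a natural identification
\[
    \mu^{(n)}_{p^r}(R) \simeq \Map_{\calg(\cC)}(\one[B^nC_{p^r}], R),
\]
which is discrete by \Cref{Higher_Roots_Trunc}. Since $\cyc{p^r}{n}$ is the localization of $\one[B^nC_{p^r}]$ at $1-\varepsilon$, a map $\cyc{p^r}{n} \to R$ is the same as a map $\psi\colon \one[B^nC_{p^r}] \to R$ such that $1 - \overline{\varepsilon}$ is a unit in $\pi_0(R)$, where $\overline{\varepsilon} := \psi(\varepsilon)$. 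Since $\overline{\varepsilon}$ is idempotent, this holds if and only if $\overline{\varepsilon} = 0$. So the remaining task is to identify the condition $\overline{\varepsilon} = 0$ with the primitivity of the corresponding $\omega$.

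The crucial step is the following adjunction-level translation: for any commutative $R$-algebra $S$, a commutative square as in \Cref{Def_Higher_Roots_Of_Unity}(2) is the same data as a factorization of the composite $\one[B^nC_{p^r}] \xrightarrow{\psi} R \to S$ through $\overline{q}_n \colon \one[B^nC_{p^r}] \to \one[B^nC_{p^{r-1}}]$. By \Cref{Transfer_Idempotent}, the map $\overline{q}_n$ is identified with the localization inverting $\varepsilon$, so such a factorization exists if and only if the image of $\varepsilon$ in $\pi_0(S)$ is a unit.

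Both directions then follow. If $\overline{\varepsilon} = 0$ in $\pi_0(R)$, its further image in any $R$-algebra $S$ is also $0$; this can be a unit only if $S = 0$, so $\omega$ is primitive. Conversely, if $\omega$ is primitive, take $S := R[\overline{\varepsilon}^{-1}]$, which by construction admits the witnessing factorization, and hence must vanish. Since $\overline{\varepsilon}$ is a genuine idempotent in $\pi_0(R)$, one has a splitting $R \simeq R[\overline{\varepsilon}^{-1}] \times R[(1-\overline{\varepsilon})^{-1}]$, and vanishing of the first factor forces $\overline{\varepsilon} = 0$. The one subtlety worth double-checking is that this idempotent splitting truly behaves as in classical algebra in our $\infty$-categorical setting, but this is clean precisely because \Cref{Transfer_Idempotent} produces $\varepsilon$ as a genuine element of $\pi_0(\one[B^nC_{p^r}])$ and idempotents split functorially in symmetric monoidal $\infty$-categories.
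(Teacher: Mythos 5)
Your proof is correct and follows essentially the same route as the paper: both use the adjunction $\one[-] \dashv (-)^\times$ together with the decomposition of $\one[B^nC_{p^r}]$ furnished by \Cref{Transfer_Idempotent}. The only difference is one of presentation — the paper phrases the primitivity condition as vanishing of the base change $\one[B^nC_{p^{r-1}}]\otimes_{\one[B^nC_{p^r}]}R$, whereas you unwind the same condition explicitly in terms of the idempotent $\overline{\varepsilon}\in\pi_0(R)$; these are two faces of the same argument.
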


\begin{proof}
    By \Cref{Higher_Roots_Trunc}, the essential image of $\mu_{p^r}^{(n),\prim}$ is contained in the full subcategory $\Set \sseq \Spc$. Using the adjunction
    \[
        \one[-] \colon \CMon(\mathcal{S}) \adj \calg(\cC) \colon (-)^{\times},
    \]
    we see that for $R\in\calg(\cC)$, a higher root of unity
    $\one[B^nC_{p^r}] \oto{\omega} R$
    is primitive, if and only if 
    \[
        \one[B^{n}C_{p^{r-1}}]\otimes_{\one[B^{n}C_{p^r}]}R \simeq 0.
    \]
    By the decomposition
    \[
        \one[B^{n}C_{p^r}] \iso
        \one[B^{n}C_{p^{r-1}}] \times \cyc{p^r}{n},
    \]
    the above holds if and only if the map $\omega \colon \one[B^nC_{p^r}] \to R$ factors through the projection map $\one[B^nC_{p^r}] \to \cyc{p^r}{n}$.  
\end{proof}

The higher cyclotomic extensions enjoy some additional pleasant properties: 
\begin{prop}
\label{Cyclotomic_Dualizable_Faithful}
Let $\cC\in\calg(\Pr_{\tsadi_n})$.
    \begin{enumerate}
        \item $\cyc{p^r}{n}$ is dualizable as an object of $\cC$ for all $r\in\NN$.
        
        \item $\cyc{p}{n}$ is faithful. 
    \end{enumerate}
\end{prop}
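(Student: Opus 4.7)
For part (1), my plan is to realize $\cyc{p^r}{n}$ as a retract of the group algebra $\one[B^nC_{p^r}]$, which is immediate from the decomposition $\one[B^nC_{p^r}] \simeq \one[B^nC_{p^{r-1}}] \times \cyc{p^r}{n}$ of \Cref{Transfer_Idempotent}. So it suffices to show that $\one[B^nC_{p^r}]$ is itself dualizable. Since $B^nC_{p^r}$ is $n$-finite and $\cC$ is $n$-semiadditive, $B^nC_{p^r}$ is $\cC$-ambidextrous. Now $\one[B^nC_{p^r}]$ is by construction the colimit of the constant diagram with value $\one$ indexed by $B^nC_{p^r}$ (obtained by applying the symmetric monoidal colimit-preserving functor $\Spc \to \cC$ to the space $B^nC_{p^r}$), and $\one$ is dualizable. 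Applying \Cref{Dualizable_Sad}, this colimit is dualizable, and retracts of dualizable objects are dualizable.

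For part (2), the plan is to exhibit $\one$ as a retract of $\cyc{p}{n}$ in $\cC$ as a $\one$-module. Faithfulness then follows formally: given $X \in \cC$ with $X \otimes \cyc{p}{n} \simeq 0$, tensoring the retract with $X$ expresses $X$ as a retract of $X \otimes \cyc{p}{n} \simeq 0$, forcing $X \simeq 0$.

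To produce the retract, my approach is to exploit the natural $(\ZZ/p)^\times$-action on $\cyc{p}{n}$ coming from the action of $(\ZZ/p)^\times$ on primitive $p$-th roots of unity of height $n$. The idea is to identify the $(\ZZ/p)^\times$-fixed points of $\cyc{p}{n}$ with $\one$, and then use the fact that, since $\cC$ is $n$-semiadditive and $|(\ZZ/p)^\times| = p-1$, the space $B(\ZZ/p)^\times$ is $\cC$-ambidextrous, so the norm map equips the inclusion of fixed points with a natural retraction. Concretely, one would track the $(\ZZ/p)^\times$-action across the decomposition $\one[B^nC_p] \simeq \one \oplus \cyc{p}{n}$ (where the group fixes the $\one$-summand and acts via its natural action on the second), and combine this with the orthogonality-of-characters statements from \Cref{Characters}, which are available whenever a primitive $(p-1)$-th root of unity exists in $\cC$ (e.g., via \Cref{p_Complete_Roots} in the $p$-complete settings arising at heights $n \geq 1$).

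The main obstacle is establishing the fixed-points identification without circularly invoking the Galois property of $\cyc{p}{n}$ (which is proven later, in \Cref{Cyclo_Galois}), and in particular handling the low-height cases $n = 0$ (where primitive $(p-1)$-th roots of unity need not exist) and $p = 2$ (where the group $(\ZZ/p)^\times$ is trivial and the argument degenerates). For $n = 0$ the result is classical---$\cyc{p}{0}$ is a free $\one$-module of rank $p-1$ and so contains $\one$ as a direct summand---so the genuinely new content is the $n \geq 1$ case, where the higher-semiadditive structure of $\cC$ is what allows the construction of the retract.
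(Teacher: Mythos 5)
Your proof of part (1) is essentially the paper's: you use the splitting $\one[B^nC_{p^r}] \simeq \one[B^nC_{p^{r-1}}] \times \cyc{p^r}{n}$ from \Cref{Transfer_Idempotent} (the paper phrases it as a fiber sequence, then cites closure of dualizables under cofibers, but this is the same observation), combined with \Cref{Dualizable_Sad} to see that $\one[B^nC_{p^r}]$ is dualizable. This part is fine.

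For part (2), however, your approach diverges from the paper's and does not close the gap. The paper's argument is elementary and height-theoretic: tensoring the fiber sequence $\cyc{p}{n}\to\one[B^nC_p]\to\one$ with an arbitrary $X\in\cC$ shows that $X\otimes\cyc{p}{n}=0$ if and only if the fold map $X\otimes B^nC_p\to X$ is an isomorphism, and by \cite[Proposition 2.4.7]{AmbiHeight} this holds if and only if $X$ is of height $<n$. Since $\cC\in\calg(\Pr_{\tsadi_n})$ has height exactly $n$, the only such $X$ is $0$. Your proposed argument instead tries to exhibit $\one$ as a retract of $\cyc{p}{n}$ via the $(\ZZ/p)^\times$-action, but this has several real problems. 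First, you explicitly acknowledge that identifying $(\cyc{p}{n})^{h(\ZZ/p)^\times}$ with $\one$ risks circularity with \Cref{Cyclo_Galois}, and you never resolve this; it is not obvious how to do so without the Galois property or at least without more information about $\cyc{p}{n}$ than is available at this stage. Second, the argument degenerates when $p=2$, where $(\ZZ/p)^\times$ is trivial; while $\cyc{2}{n}$ turns out to be $\one$ itself, establishing that fact a priori is precisely what you would need the Galois property (or the paper's height argument) for. Third, \Cref{Characters} requires a primitive $(p-1)$-th root of unity in $\cC$, and at height $n$ you want to appeal to $p$-completeness via \Cref{p_Complete_Roots} --- but for abstract $\cC\in\calg(\Pr_{\tsadi_n})$ the $p$-completeness of $\one_\cC$ is not something you can take for granted without justification. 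In short, the paper's proof of part (2) works in one step from the height assumption, whereas your route requires additional hypotheses and machinery that are either circular or not yet available.
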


\begin{proof}
    (1) We have a fiber sequence  
    \[
        \cyc{p^r}{n} \to 
        \one[B^{n}C_{p^r}] \to
        \one[B^{n}C_{p^{r-1}}]
        \qin \cC.
    \]
    Since $\cC$ is $\infty$-semiadditive, $B\otimes\one$ is a dualizable object of $\cC$ for every $\pi$-finite space $B$ (see \Cref{Dualizable_Sad}), and dualizable objects in a stable category are closed under (co)fibers (see \Cref{Stable_Dualizable}). 
    
    (2) For $n=0$ the object $\cyc{p}{n}$ is the fiber of the fold map 
    \[
        \one^{\oplus p} \simeq \one[C_p]  \to \one
    \] 
    and hence isomorphic to $\one^{\oplus (p-1)}$. Tensoring with this object is conservative since it contains the unit as a direct summand. 
    Assume now that $n\ge 1$.  
    For every object $X\in\cC$ we have a fiber sequence
    \[
        X\otimes\cyc{p}{n}\to 
        X\otimes\one[B^{n}C_p]\to 
        X
        \qin \cC.
    \]
    Therefore, $X\otimes\cyc{p}{n}=0$, if and only if the fold map $X\otimes B^{n}C_p\to X$ is an isomorphism. We wish to deduce that $X=0$. Indeed, for $n\ge 1$ by \cite[Proposition 2.4.7]{AmbiHeight}, we get that $X$ is of height $<n$. Since $\cC$ is of height $n$, the only object $X\in\cC$ which is of height $<n$ is $X=0$. 
\end{proof}

\subsubsection{Infinite cyclotomic extensions}
From \Cref{Cyclo_Rep_Primitive}, it follows in particular, that we have canonical maps 
$\cyc{p^{r-1}}{n} \to \cyc{p^{r}}{n}$ corepresenting the natural transformation  $\omega \mapsto \omega^p$ on primitive roots of unity (see \Cref{Primitive_Power}). Gathering the cyclotomic extensions for all $r\ge0$ along these maps we get:
\begin{defn}
    Let $\cC\in\calg(\Pr_{\tsadi_n})$. We define 
    \[
        \cyc{p^\infty}{n} :=
        \colim_{r\in\NN}\, \cyc{p^r}{n}.
    \]
\end{defn}
Loosely speaking, the commutative algebra  $\cyc{p^\infty}{n}$ corepresents choices of compatible systems of height $n$ primitive roots of unity 
$\omega_p, \omega_{p^2}, \omega_{p^3},\dots$
such that 
$\omega_{p^r}^p = \omega_{p^{r-1}}$ for all $r\in\NN$.
The infinite cyclotomic extension $\cyc{p^\infty}{n}$ can also be constructed directly by splitting off an idempotent from a group algebra. The group homomorphism $\ZZ_p \oto{\times p} \ZZ_p$ induces a map of commutative algebras
\[
    \overline{q}\colon
    \one[B^{n+1}\ZZ_p] \to
    \one[B^{n+1}\ZZ_p].
\]
The following is a direct analogue, and a consequence, of \Cref{Transfer_Idempotent} for the case $r=\infty$:
\begin{prop}\label{Infinite_Transfer_Idempotent}
    Let $\cC\in\calg(\Pr_{\tsadi_n})$ for some $n\ge 1$. There exists an idempotent element 
    $\varepsilon \in \pi_0(\one[B^{n+1}\ZZ_p]),$ 
    such that
    \[
        \one[B^{n+1} \ZZ_p][(1-\varepsilon)^{-1}] \simeq
        \cyc{p^\infty}{n}
    \]
    and 
    \[
        \one[B^{n+1} \ZZ_p][\varepsilon^{-1}] \simeq
        \one[B^{n+1} \ZZ_p],
    \]
    and under the second isomorphism, the canonical map 
    $\one[B^{n+1} \ZZ_p] \to \one[B^{n+1} \ZZ_p][\varepsilon^{-1}]$ is identified with $\overline{q}$. 

\end{prop}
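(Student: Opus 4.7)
The proof closely parallels that of \Cref{Transfer_Idempotent}, replacing the finite-order fiber sequence $B^{n}C_{p}\to B^{n}C_{p^r}\to B^{n}C_{p^{r-1}}$ by its infinite-order analogue
\[
B^{n}C_{p}\to B^{n+1}\ZZ_{p}\xrightarrow{\times p}B^{n+1}\ZZ_{p},
\]
obtained by delooping the short exact sequence $0\to\ZZ_{p}\xrightarrow{\times p}\ZZ_{p}\to C_{p}\to 0$ of abelian groups. The fiber remains the $\pi$-finite, $\cC$-ambidextrous space $B^{n}C_{p}$, whose cardinality $|B^{n}C_{p}|$ is invertible in $\pi_{0}\one$ by the height-$n$ hypothesis, so the entire averaging construction should translate verbatim.

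First I would define the idempotent $\varepsilon$ via the same explicit formula that is available in the finite case. Using the fiber inclusion $\iota\colon B^{n}C_{p}\to\Omega^{\infty}\one[B^{n+1}\ZZ_{p}]$, set
\[
    \varepsilon := \frac{1}{|B^{n}C_{p}|}\int_{B^{n}C_{p}}\iota\qin\pi_{0}(\one[B^{n+1}\ZZ_{p}]).
\]
The verification that $\varepsilon^{2}=\varepsilon$ runs by the same calculation as in the finite case recalled in the remark following \Cref{Transfer_Idempotent}: the integral over a $\cC$-ambidextrous space absorbs the normalizing factor in the expected way, using nothing beyond $\infty$-semiadditivity of $\cC$ (ensured by $n\ge 1$) and the invertibility of $|B^nC_p|$.

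To obtain the decomposition and identify $\one[B^{n+1}\ZZ_{p}][\varepsilon^{-1}]$ with $\one[B^{n+1}\ZZ_{p}]$, I would mirror the proof of \Cref{Transfer_Idempotent}: apply \Cref{Locsys_Mod} one level up to get $\Mod_{\one[B^{n+1}\ZZ_{p}]}(\cC)\simeq\cC^{B^{n+2}\ZZ_{p}}$, under which restriction of scalars along $\overline{q}$ becomes pullback along $Q := B^{n+2}(\times p)\colon B^{n+2}\ZZ_{p}\to B^{n+2}\ZZ_{p}$. The fiber $B^{n+1}C_{p}$ of $Q$ is $\pi$-finite and $\cC$-ambidextrous, and an extension of \cite[Theorem 4.3.2]{AmbiHeight} to maps between non-$\pi$-finite spaces whose fibers are $\cC$-ambidextrous should yield the orthogonal decomposition $\cC^{B^{n+2}\ZZ_{p}}\simeq\cC^{B^{n+2}\ZZ_{p}}\times(\cC^{B^{n+2}\ZZ_{p}})^{\perp}$. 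Translating back through \Cref{Locsys_Mod} identifies the first factor with $\one[B^{n+1}\ZZ_{p}][\varepsilon^{-1}]\simeq\one[B^{n+1}\ZZ_{p}]$ equipped with the canonical map $\overline{q}$.

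The final identification $\one[B^{n+1}\ZZ_{p}][(1-\varepsilon)^{-1}]\simeq\cyc{p^{\infty}}{n}$ would proceed by comparison with the finite case through the quotients $\ZZ_{p}\twoheadrightarrow C_{p^{r}}$: the idempotents produced by \Cref{Transfer_Idempotent} at each finite stage $r$ are compatible under the natural system of maps, and taking the colimit against the definition $\cyc{p^{\infty}}{n}=\colim_{r}\cyc{p^{r}}{n}$ forces the desired equivalence, with both sides corepresenting compatible systems of primitive height-$n$ $p^{r}$-th roots of unity via \Cref{Cyclo_Rep_Primitive}. The principal obstacle is the extension of \cite[Theorem 4.3.2]{AmbiHeight} to the non-$\pi$-finite setting; once this is in place everything else is a formal analogue of the finite-case argument.
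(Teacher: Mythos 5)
Your proposal has the right instinct in its final paragraph---compare to finite stages through the $C_{p^r}$'s---but it leans on a step that is a genuine gap: extending \cite[Theorem 4.3.2]{AmbiHeight} from $\pi$-finite spaces to the map $B^{n+2}\ZZ_p\xrightarrow{\times p}B^{n+2}\ZZ_p$. The space $B^{n+2}\ZZ_p$ is not $\pi$-finite (and $\ZZ_p$ is uncountable), so there is no reason the cited theorem applies; the fact that the fiber is $\cC$-ambidextrous does not let you integrate over the base, which is what the orthogonal-decomposition argument actually needs. You acknowledge this is ``the principal obstacle,'' but the burden is to remove it, not to assume it away.

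The paper avoids this entirely by never working directly with $\ZZ_p$ at the level where integration is required. Instead it applies \Cref{Transfer_Idempotent} at every finite stage $r$, observes that the idempotents $\varepsilon_r\in\pi_0(\one[B^nC_{p^r}])$ are compatible under the transition maps $\one[B^nC_{p^r}]\to\one[B^nC_{p^{r+1}}]$ (the fiber of $q_r$ is always the same copy of $C_p$, so the averaging formula is preserved), and passes to the filtered colimit to get a single idempotent $\varepsilon\in\pi_0(\one[B^nC_{p^\infty}])$ together with both splittings: the $\varepsilon$-part is $\colim_r\one[B^nC_{p^{r-1}}]\simeq\one[B^nC_{p^\infty}]$ and the $(1-\varepsilon)$-part is $\colim_r\cyc{p^r}{n}=\cyc{p^\infty}{n}$. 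Crucially, $C_{p^\infty}$ is a filtered colimit of finite groups, so every step uses only $\pi$-finite input. The final move is to rewrite $\one[B^nC_{p^\infty}]$ as $\one[B^{n+1}\ZZ_p]$: the short exact sequence $0\to\ZZ_p\to\QQ_p\to C_{p^\infty}\to 0$ gives a Bockstein map $B^nC_{p^\infty}\to B^{n+1}\ZZ_p$ which becomes an equivalence after $p$-completion (since $B^n\QQ_p$ is $p$-adically trivial), and $\cC$ is $p$-complete because it has height $n\ge1$. This last step is exactly what your proposal is missing: you treat $\one[B^{n+1}\ZZ_p]$ as the ambient object from the outset, without justifying how its module theory or its decompositions relate to those of the finite stages. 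If you rewrite your comparison-with-finite-stages idea to produce the idempotent and both splittings at once, and then invoke the Bockstein identification, you will have recovered the paper's argument and will not need any extension of \cite[Theorem 4.3.2]{AmbiHeight} beyond the $\pi$-finite case.
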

\begin{proof}
    Using \Cref{Transfer_Idempotent} for every $r\ge0$,  and taking the colimit, we get that $C_{p^\infty} \oto {\times p} C_{p^\infty}$ 
    induces an idempotent $\varepsilon \in \pi_0(\one[B^nC_{p^\infty}]),$
    such that 
    \[
        \one[B^n C_{p^\infty}][\varepsilon^{-1}] \simeq
        \one[B^n C_{p^\infty}]
    \]
    and
    \[
        \one[B^n C_{p^\infty}][(1-\varepsilon)^{-1}] \simeq
        \cyc{p^\infty}{n}.
    \]
    Now, the short exact sequence of abelian groups
    \[
        0 \to \ZZ_p \to \QQ_p \to C_{p^\infty} \to 0
    \]
    induces a Bockstein homomorphism
    \[
        B^n C_{p^\infty} \to B^{n+1}\ZZ_p,
    \]
    which becomes an isomorphism upon $p$-completion. Since $\cC$ is assumed to be of height $\ge1$, it is $p$-complete and the result follows.
\end{proof}

\subsubsection{Equivariance and Galois}
For every $\cC\in\calg(\Pr_\st)$ and  $R\in\calg(\cC)$, the space 
\[
    \mu^{(n)}_{p^{r}}(R) = 
    \Map_{\Sp^\cn}(C_{p^r}, \Omega^n R^\times)
\]
admits a canonical action of the group $(\ZZ/p^{r})^{\times}$ by pre-composition. If $\cC$ is higher semiadditive and of height $n$, then $\mu^{(n)}_{p^{r}}(R)$ is discrete (\Cref{Higher_Roots_Trunc}). Furthermore, since for every commutative $R$-algebra $S$ the subset $\mu^{(n)}_{p^{r-1}}(S) \sseq \mu^{(n)}_{p^{r}}(S)$ is closed under the action of $(\ZZ/p^{r})^{\times}$, so is the subset of primitive roots $\mu^{(n),\prim}_{p^{r}}(R) \sseq
\mu^{(n)}_{p^{r}}(R)$. We therefore obtain an action of $(\ZZ/p^{r})^{\times}$ on the corepresenting object $
\one[\omega^{(n)}_{p^{r}}]\in\calg(\cC)$
making the map 
$\one[B^nC_{p^r}] \to \cyc{p^r}{n}$
equivariant with respect to $(\ZZ/p^{r})^{\times}$.
Given $\cC\in\calg(\Pr_{\tsadi_n})$, it is natural to ask whether the objects 
\[
    \cyc{p^r}{n}\qin
    \calg(\cC^{B(\ZZ/p^r)^\times})
\]
are \emph{Galois}. For $n=0$, this is always the case. However, for $n=1$ a counter example was constructed by Yuan in \cite{yuan2022sphere}.
In the next section, we shall address this question for higher semiadditive $\infty$-categories arising in chromatic homotopy theory.

\section{Chromatic Applications}
\label{Chromatic}

In this final section, we apply the general theory of higher cyclotomic extensions to the chromatic world and deduce the main results of the paper. We begin in \S5.1 by showing that the higher cyclotomic extensions in $\Sp_{K(n)}$ are Galois, and deduce using the results of \S2.1, that the same holds for $\Sp_{T(n)}$ (\Cref{Cyclo_Galois}). Then, in \S5.2, we review the Galois theory of $\Sp_{K(n)}$, and identify the quotients of the Morava stabilizer group corresponding to the (higher) cyclotomic extensions of $\Sph_{K(n)}$ (\Cref{Cyc_Char_K(n)} and \Cref{Unramified_K(n)_Cyclo}). 
In particular, we deduce that all the abelian Galois extensions of $\Sph_{K(n)}$ can be obtained as a combination of ordinary and higher cyclotomic extensions. 
In \S5.3, we apply the results of \S3 to relate the higher cyclotomic extensions of $\Sph_{K(n)}$ to the $K(n)$-local Picard group (\Cref{Pic_Kn} and \Cref{W_Different}). Finally, in \S5.4, we establish the consequences of the above for the Galois extensions of $\Sp_{T(n)}$ (\Cref{Tele_Gal}) and its Picard group (\Cref{Tele_Pic_Odd} and \Cref{Tele_Pic_Even}).

\subsection{Cyclotomic Galois Extensions}
We fix a natural number $n\ge1$, a prime number $p$, and a formal group law $\Gamma$ of height $n$ over $\FF_p$ (which will be kept implicit throughout). We denote by $K(n)$ and $E_n$ the Morava $K$-theory and Lubin-Tate ring spectra associated to $\Gamma$. In particular, the homotopy groups of $K(n)$ and $E_n$ are given by\footnote{In the literature, $E_n$ often denotes a closely related ring spectrum whose homotopy groups are $W(\FF_{p^n})[[u_1,\dots,u_{n-1}]][u^\pm]$.}
\[
    \pi_* K(n) = \FF_p[v_n^\pm] \quad , \quad |v_n| = 2(p^n-1)
\]
\[
    \pi_* E_n = W(\overline{\FF}_p)[[u_1,\dots,u_{n-1}]][u^\pm] \quad, \quad |u_i| = 0, \, |u| = 2.
\]
We view $E_n$ as an object of $\calg(\Sp_{K(n)})$ (see \cite{goerss2005moduli,hopkins2014elliptic}) and denote the symmetric monoidal $\infty$-category of $K(n)$-local $E_n$-modules as follows:
\[
   \Theta_n := \Mod_{E_n}(\Sp_{K(n)}).
\]
For $M\in\Theta_n$, we consider $\pi_*(M)$ as a graded module over the twisted continuous group algebra of $\Morex_n$ over $\pi_*E_n$. Namely, as an object in the category of \textit{Morava modules} (see \cite[Definition 3.37]{barthel2019chromatic}). If $\pi_{\mathrm{odd}}(M)=0$, we say that the Morava module of $M$ is \textit{even}. In this case, one can consider the equivalent data of $\pi_0(M)$ as a module over the twisted continuous group algebra of $\Morex_n$ over $\pi_0(E_n)$, which we call the \textbf{even Morava module} of $M$.
For $X\in\Sp$, we refer to the (even) Morava module of $L_{K(n)}(E_n\otimes X)$, simply as the (even) Morava module of $X$.

In addition, we let $F(n)$ be some finite spectrum of type $n$, with a $v_n$-self map $v\colon \Sigma^d F(n) \to F(n)$, and an associated ''telescope'' 
\[
    T(n) := F(n)[v^{-1}] = 
    \colim (F(n) \oto{v} \Sigma^{-d}F(n) \oto{v} \Sigma^{-2d}F(n) \oto{v} \dots).
\]

The $\infty$-categories $\Sp_{K(n)}$, $\Sp_{T(n)}$, and $\Theta_n$ are all $\infty$-semiadditive and of height $n$ (\cite[Propostion 4.4.4 and Theorem 4.4.5]{AmbiHeight}, see also \cite[Theorem 5.2.1]{AmbiKn} and \cite[Theorem A]{TeleAmbi}). That is, we have 
\[
    \Sp_{K(n)},\;\Sp_{T(n)},\; \Theta_n
    \qin \calg(\Prl_{\tsadi_n}).
\]
Thus, we can consider height $n$ cyclotomic extensions in each one of them. 
Our first goal is to show that all these extensions are Galois. We begin by showing that in $\Theta_n$, the higher cyclotomic extensions are, in fact, \emph{split Galois} (\Cref{Galois_Split}). 
\begin{prop}\label{En_Split_Galois}
    For every $r\in\NN$, there is a $(\ZZ/p^r)^\times$-equivariant commutative ring isomorphism
    \[
        E_n[\omega^{(n)}_{p^r}] \simeq \prod_{(\ZZ/p^r)^\times} E_n.
    \]
\end{prop}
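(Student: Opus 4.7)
The plan is to produce $\phi(p^r) = |(\ZZ/p^r)^\times|$ distinct primitive height-$n$ $p^r$-th roots of unity in $E_n$, freely and transitively permuted by the natural $(\ZZ/p^r)^\times$-action, and assemble them into the claimed equivariant isomorphism. By \Cref{Cyclo_Rep_Primitive}, each such primitive root is the same datum as a commutative $E_n$-algebra map $E_n[\omega_{p^r}^{(n)}] \to E_n$, compatibly with the $(\ZZ/p^r)^\times$-action. Collecting all $\phi(p^r)$ of them yields an equivariant commutative $E_n$-algebra map
\[
    \Phi : E_n[\omega_{p^r}^{(n)}] \longrightarrow \prod_{(\ZZ/p^r)^\times} E_n,
\]
which then remains to be shown an equivalence.

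To produce the torsor of roots, the plan is to leverage the fact that $E_n$ is the Lubin-Tate spectrum oriented by the universal deformation formal group $\widehat\Gamma$. Via the higher semiadditive structure of $\Theta_n$, a height-$n$ primitive $p^r$-th root of unity in $E_n$ should correspond to a generator of the cyclic determinant line $\Lambda^n \widehat\Gamma[p^r] \simeq \ZZ/p^r$ attached to the $p^r$-torsion of $\widehat\Gamma$; the generators of such a line form a $(\ZZ/p^r)^\times$-torsor of cardinality $\phi(p^r)$. (This explains, in particular, why at the end of the day the cyclotomic extensions are classified by the determinant character $\det\colon \Morex_n \to \ZZ_p^\times$, as developed later in \S5.2.)

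To verify $\Phi$ is an equivalence, both sides are dualizable in $\Theta_n$ — the left by \Cref{Cyclotomic_Dualizable_Faithful}, the right trivially — so it suffices to check an iso on even Morava modules. Iterating the cyclotomic idempotent splitting of \Cref{Transfer_Idempotent} yields
\[
    E_n[B^n C_{p^r}] \simeq \prod_{d=0}^{r} E_n[\omega_{p^d}^{(n)}],
\]
and induction on $r$ (trivial for $r=0$) reduces the problem to a total rank count: the $\pi_0 E_n$-rank of $E_n[B^n C_{p^r}]$ must equal $\sum_{d=0}^{r} \phi(p^d) = p^r$, matching $|\Lambda^n \widehat\Gamma[p^r]|$. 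This rank is a standard Ravenel--Wilson / Hopkins--Kuhn--Ravenel-type computation of the (even) Morava module of $\Sigma^\infty_+ K(\ZZ/p^r, n)$.

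The main obstacle is the first step: extracting an honest $(\ZZ/p^r)^\times$-torsor structure on $\mu_{p^r}^{(n),\mathrm{prim}}(E_n)$ from the abstract primitivity condition of \Cref{Def_Higher_Roots_Of_Unity}. The categorical definition of primitivity must be translated into the concrete algebro-geometric language of Lubin-Tate deformation data, and showing that $E_n$ realizes the full torsor (rather than merely a subset) of primitive roots requires appropriately combining the universal deformation property of $E_n$ with the semiadditive integration machinery on $\Theta_n$.
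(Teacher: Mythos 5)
The paper's proof is much more streamlined: it invokes the result that $E_n[B^nA] \simeq E_n^{A^*}$ naturally in the finite abelian $p$-group $A$ (this is \cite[Corollary 5.3.26]{AmbiKn}, essentially Cartier duality for $E_n$-homology of Eilenberg--MacLane spaces, building on the computations of Ravenel--Wilson/Hopkins--Kuhn--Ravenel/Peterson). Applying this to the surjection $C_{p^r}\onto C_{p^{r-1}}$ identifies the complement of the cyclotomic idempotent with the factors of $E_n^{C_{p^r}}$ indexed by $C_{p^r}\smallsetminus pC_{p^r}\simeq(\ZZ/p^r)^\times$, and naturality gives the equivariance for free. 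Your plan is genuinely different in architecture — you want to build a map $\Phi:E_n[\omega^{(n)}_{p^r}]\to\prod_{(\ZZ/p^r)^\times}E_n$ out of a torsor of primitive higher roots of unity in $E_n$ and then check it's an equivalence — but the algebro-geometric content you gesture at (generators of $\Lambda^n\widehat\Gamma[p^r]$ being a $(\ZZ/p^r)^\times$-torsor) is precisely what the cited Corollary packages. In effect, you are proposing to re-derive that input rather than use it.

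There are two concrete gaps. First, the construction of the torsor of primitive roots is not carried out: you say yourself this is ``the main obstacle'' and you leave the translation from the categorical primitivity of \Cref{Def_Higher_Roots_Of_Unity} into Lubin--Tate deformation data as a promissory note. Without it, $\Phi$ is not constructed. Second, even granting the $\phi(p^r)$ roots, a rank count of even Morava modules does not by itself show $\Phi$ is an isomorphism: a ring map between free $\pi_0 E_n$-modules of equal finite rank can fail to be an equivalence. You would additionally need, e.g., surjectivity mod the maximal ideal of $\pi_0 E_n$ (say by showing the $\phi(p^r)$ roots cut out distinct closed points and then using CRT plus Nakayama), which is essentially the étaleness/separability of the cyclotomic extension — yet another place where you would be reconstructing what \cite[Corollary 5.3.26]{AmbiKn} hands you directly. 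If you are willing to cite that corollary, the paper's proof is shorter and closes both gaps; if not, both steps need to be filled in.
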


\begin{proof}
    For every finite abelian $p$-group $A$, we denote by 
    \[
        A^*\simeq \hom(A,\QQ_p/\ZZ_p)
    \]    
    the Pontryagin dual of $A$. By \cite[Corollary 5.3.26]{AmbiKn}, we have an isomorphism
    \[
        E_n[B^nA] \simeq E_n^{A^*} \qin 
        \calg(\Sp_{K(n)}),
    \]
    which is furthermore natural in $A$. In particular, when $A$ is of exponent $p^r$, this isomorphism is equivariant with respect to the $(\ZZ/p^r)^\times$-action on $A$ given by scalar multiplications. Consider the $(\ZZ/p^r)^\times$-equivariant decomposition
    \[
        E_n[B^nC_{p^r}] \simeq E_n[B^nC_{p^{r-1}}] \times E_n[\omega^{(n)}_{p^r}].
    \]
    The group homomorphism $C_{p^r} \onto C_{p^{r-1}}$ induces an injection on Pontryagin duals, which we can identify with the embedding $C_{p^{r-1}} \into C_{p^r}$, whose image is $pC_{p^r}$. Noting that 
    \[
        C_{p^r} \smallsetminus pC_{p^r} \simeq (\ZZ/p^r)^\times,
    \] 
    it follows that we have a $(\ZZ/p^r)^\times$-equivariant isomorphism 
    \[ 
        E_n[\omega^{(n)}_{p^r}] \simeq \prod_{(\ZZ/p^r)^\times} E_n \qin
        \calg(\Sp_{K(n)}).
    \]
\end{proof}

Using nil-conservativity, we can now deduce that the higher cyclotomic extensions of $\Sp_{T(n)}$ (and hence $\Sp_{K(n)}$) are Galois as well.

\begin{prop}
    \label{Cyclo_Galois}
    For all $r\in\NN$, the $p^r$-th cyclotomic extensions in $\Sp_{K(n)}$ and $\Sp_{T(n)}$ are faithful Galois extensions.
\end{prop}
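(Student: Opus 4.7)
The plan is to reduce both cases to the split-Galois computation of \Cref{En_Split_Galois} using the nil-conservativity detection principle of \Cref{Galois_Nil}. Consider the tower of symmetric monoidal colimit-preserving functors
\[
    \Sp_{T(n)} \xrightarrow{\,L_{K(n)}\,} \Sp_{K(n)} \xrightarrow{\,E_n\otimes(-)\,} \Theta_n.
\]
Both functors are nil-conservative: for $L_{K(n)}$ this is recorded in \cite[\S4.4]{TeleAmbi}, and for $E_n\otimes(-)$ it follows from the fact that $E_n$ detects nilpotence on $K(n)$-local $\EE_1$-rings. Since all three $\infty$-categories lie in $\calg(\Prl_{\tsadi_n})$, the classifying space $B(\ZZ/p^r)^\times$ is ambidextrous in each of them, so the hypotheses of \Cref{Galois_Nil} are satisfied.

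Next, I would check that the higher cyclotomic extension is compatible with symmetric monoidal base-change. Because the construction in \Cref{Def_Higher_Cyclo} is the inversion of an idempotent in $\pi_0(\one[B^n C_{p^r}])$, and because symmetric monoidal colimit-preserving functors preserve both group algebras and the splitting of idempotents, the images of the cyclotomic extensions under the two functors above agree with the cyclotomic extensions of the targets, equivariantly for the $(\ZZ/p^r)^\times$-action. Explicitly,
\[
    E_n \otimes_{\Sph_{K(n)}} \cyc[\Sph_{K(n)}]{p^r}{n} \;\simeq\; E_n[\omega^{(n)}_{p^r}],
    \qquad
    L_{K(n)}\bigl(\cyc[\Sph_{T(n)}]{p^r}{n}\bigr) \;\simeq\; \cyc[\Sph_{K(n)}]{p^r}{n}.
\]

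Now I can assemble the argument. By \Cref{En_Split_Galois}, the top object $E_n[\omega^{(n)}_{p^r}]$ is split $(\ZZ/p^r)^\times$-Galois in $\Theta_n$ (as in \Cref{Galois_Split}). By \Cref{Cyclotomic_Dualizable_Faithful}(1), the cyclotomic extension $\cyc[\Sph_{K(n)}]{p^r}{n}$ is dualizable in $\Sp_{K(n)}$. Applying \Cref{Galois_Nil} to the nil-conservative functor $E_n\otimes(-)$, we conclude that $\cyc[\Sph_{K(n)}]{p^r}{n}$ is Galois. Iterating, the $T(n)$-local cyclotomic extension $\cyc[\Sph_{T(n)}]{p^r}{n}$ is again dualizable in $\Sp_{T(n)}$ by \Cref{Cyclotomic_Dualizable_Faithful}(1), its $L_{K(n)}$-image is the just-proven Galois extension in $\Sp_{K(n)}$, so a second invocation of \Cref{Galois_Nil} (now for the nil-conservative functor $L_{K(n)}$) yields the Galois property in $\Sp_{T(n)}$.

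The only non-routine step is the compatibility with symmetric monoidal functors of the idempotent used to cut out the primitive roots; this should be essentially formal, given that the whole construction of $\varepsilon$ in \Cref{Transfer_Idempotent} is natural in the ambient $\infty$-category under maps in $\calg(\Prl_{\tsadi_n})$ that preserve the relevant colimits, and both $L_{K(n)}$ and $E_n\otimes(-)$ qualify. Modulo citing this naturality, the proof is a direct application of the detection principle, with \Cref{En_Split_Galois} providing the base case.
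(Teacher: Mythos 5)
Your argument is correct and follows essentially the same route as the paper's: the same chain of nil-conservative functors $\Sp_{T(n)} \to \Sp_{K(n)} \to \Theta_n$, dualizability from \Cref{Cyclotomic_Dualizable_Faithful}(1), the split computation in $\Theta_n$ from \Cref{En_Split_Galois}, and the detection principle \Cref{Galois_Nil}. The only difference is cosmetic — you spell out the base-change compatibility of the cyclotomic idempotent (which the paper leaves implicit) and cite nil-conservativity slightly differently (the paper points to {\cite[Corollary 5.1.17]{TeleAmbi}} for both functors at once).
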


\begin{proof}
    The nilpotence theorem (\cite{nilp2}) implies that the functors
    \[
        \Sp_{T(n)} \oto{L_{K(n)}} \Sp_{K(n)} \oto{E_n\otimes(-)} \Theta_n 
    \]
    are nil-conservative (see \cite[Corollary 5.1.17]{TeleAmbi}). Moreover, $\Sph_{T(n)}[\omega^{(n)}_{p^r}]$ and  $\Sph_{K(n)}[\omega^{(n)}_{p^r}]$ are dualizable by \Cref{Cyclotomic_Dualizable_Faithful}(1). Thus, the Galois property of these extensions follows from Propositions \ref{En_Split_Galois} and \ref{Galois_Nil}. Since $\Sp_{T(n)}$ and $\Sp_{K(n)}$ are $\infty$-semiadditive, these extensions are faithful (see \Cref{rem:Galois_semiadd_faithful}).
\end{proof}

\subsection{The \textit{K}(\textit{n})-local Cyclotomic Character} 

In classical algebra, Galois theory allows one to classify the Galois extensions of a commutative ring in terms of its Galois group. For example, the sequence of $p^r$-th cyclotomic extensions $\QQ_p(\omega_{p^r})$ is classified by the ($p$-adic) cyclotomic character 
\[
    \chi \colon \Gal(\QQ_p) \to \ZZ_p^\times.
\]
By the work of Devinatz-Hopkins \cite{DH}, Rognes \cite{RognesGal}, Baker-Richter \cite{BakerRichterGalois} and Mathew \cite{AkhilGalois}, the Galois extensions of $\Sph_{K(n)}$ can be similarly classified in terms of the (extended) Morava stabilizer group. 
In this subsection, we define the higher analogue of the $p$-adic cyclotomic character for $\Sp_{K(n)}$, which classifies the higher cyclotomic extensions  $\cyc[\Sph_{K(n)}]{p^r}{n}$, and prove that it identifies with the determinant map of the Morava Stabilizer group (see \Cref{Cyc_Char_K(n)}). We also show that the canonical map $\Morex_n \to \widehat{\ZZ}$ classifies the (ordinary) prime to $p$ cyclotomic extensions (\Cref{Unramified_K(n)_Cyclo}), by analogy with the map  $\Gal(\QQ_p) \to \widehat{\ZZ}$, which classifies the maximal unramified extension of $\QQ_p$.

\subsubsection{Morava stabilizer group}
We begin with a recollection of the Galois theory of $\Sp_{K(n)}$. The commutative ring $\pi_0(E_n)$ carries the universal deformation of the formal group
$\cl{\Gamma} = \Gamma \times_{\FF_p} \cl{\FF}_p.$ 
As such, it is acted on by the following group:
\begin{defn}\label{Morava_Group}
    For every integer $n\ge1$, the height $n$ (extended) \textbf{Morava stabilizer group} $\Morex_n$ is defined to be the group of automorphisms of $\cl{\Gamma}$ over $\FF_p$. That is, the group of pairs $(\sigma,\varphi)$, where 
    $\sigma \in \Gal(\FF_p)$ 
    and 
    $\varphi\colon \sigma^*\cl{\Gamma} \iso \cl{\Gamma}.$ 
    We denote by 
    \[
        \pi \colon \Morex_n \onto 
        \Gal(\FF_p) \simeq
        \widehat{\ZZ}
    \]
    the projection $(\sigma,\varphi)\mapsto \sigma$, whose kernel is  $\aut(\cl{\Gamma}/\cl{\FF}_p)$.
\end{defn}

In \cite{DH}, Devinatz and Hopkins have lifted the canonical continuous action of the pro-finite group $\Morex_n$ on the $(p,u_1,\dots,u_{n-1})$-adic ring $\pi_0(E_n)$, to a continuous action on $E_n$ itself as an object of $\calg(\Sp_{K(n)})$, which allows taking continuous fixed points with respect to closed subgroups. This action was shown to exhibit $E_n$ as a pro-finite Galois extension of $\Sph_{K(n)}$  in \cite[Theorem 5.4.4]{RognesGal}. Furthermore, $E_n$ itself is algebraically closed (by \cite[Theorem 1.1]{BakerRichterGalois} for $p$ odd, and \cite[\S10.2 and Theorem 6.29]{AkhilGalois} for all $p$). Consequently, $\Morex_n$ classifies Galois extensions of $\Sph_{K(n)}$ via a version of the ``Galois correspondence'' that we recall (with some paraphrasing) from \cite{AkhilGalois}. For every finite group $G$ and a continuous group homomorphism $\rho\colon \Morex_n \to G$, we equip
\[
        \Cnt(G,E_n):= \prod_G E_n
        \qin \calg(\Sp_{K(n)})
\]
with the (continuous) \textbf{$\rho$-twisted action}  of $\Morex_n$, which, in addition to the standard action on each factor, permutes the factors through $\rho$ and the left regular action of $G$ on itself. 
In particular, on homotopy groups, $g \in G$ acts by the formula
\[
    g\cdot (x_h)_{h\in G} = (gx_{\rho(g)^{-1}h})_{h\in G} 
    \quad\text{for}\quad 
    (x_h)_{h\in G} \in \prod_G \pi_*(E_n).
\]
In addition, the group $G$ acts on $C(G,E_n)$ by permuting the factors through the \emph{right} regular action of $G$ on itself, and the two actions clearly commute. Thus,  $C(G,E_n)^{h\Morex_n}$ acquires a $G$-action.    

\begin{prop}[{``$K(n)$-local Galois Correspondence'', \cite[Theorem 10.9, Proposition 5.32]{AkhilGalois}}]
\label{Galois_corr_k(n)}
    Let $G$ be a finite group. Taking $\Morex_n \to G$ to $\Cnt(G,E_n)^{h\Morex_n}$, establishes a bijection 
    \[
        \{\emph{\textrm{cont. homomorphisms }}\Morex_n\to G \}\,/\,\emph{\textrm{conj.}}
        \quad \simeq \quad
        \{G\emph{\text{-Galois ext. of }}\, \Sph_{K(n)}\}\,/\, \emph{\textrm{iso}}.
    \]
    In particular, for a surjective homomorphism with kernel $U\le \Morex_n$, the corresponding Galois extension is given by $E_n^{hU}$ (with the residual $G$-action).  
\end{prop}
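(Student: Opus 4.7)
The plan is to follow the strategy outlined in \cite{AkhilGalois}, using two key inputs: first, that $E_n$ is a faithful continuous pro-$\Morex_n$-Galois extension of $\Sph_{K(n)}$ (as constructed in \cite{DH} and verified in \cite{RognesGal}); second, that $E_n$ is ``algebraically closed'' in $\Sp_{K(n)}$, i.e., admits no nontrivial finite Galois extensions (by \cite{BakerRichterGalois} for $p$ odd and \cite{AkhilGalois} in general). Granting these, the statement becomes the standard correspondence relating finite Galois covers to continuous $G$-bundles on the ``profinite Galois group''.

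First I would construct the forward map and check it lands in $G$-Galois extensions. Given a continuous $\rho\colon\Morex_n\to G$, the $\rho$-twisted $\Morex_n$-action on $\Cnt(G,E_n)$ commutes with the right regular $G$-action, so $R_\rho:=\Cnt(G,E_n)^{h\Morex_n}$ inherits a $G$-equivariant commutative algebra structure over $\Sph_{K(n)}$. To verify that $R_\rho$ is $G$-Galois, I would base-change along the faithful pro-Galois extension $\Sph_{K(n)}\to E_n$: the continuous homotopy-fixed-point spectral sequence identifies $E_n\otimes R_\rho\simeq\Cnt(G,E_n)$ in $\calg(\Theta_n^{BG})$, which is manifestly the split $G$-Galois extension of $E_n$. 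Faithful pro-Galois descent then transports both conditions of \Cref{Galois} back down to $\Sph_{K(n)}$. Conjugating $\rho$ by $g\in G$ corresponds to relabeling factors of $\Cnt(G,E_n)$ and yields a $G$-equivariantly isomorphic $R_\rho$, so the map descends to conjugacy classes.

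Next I would construct the inverse. Given a $G$-Galois extension $R$ of $\Sph_{K(n)}$, base-change to $E_n$ gives a $G$-Galois extension $E_n\otimes R\in\calg(\Theta_n^{BG})$ of $E_n$. Since $E_n$ is algebraically closed in $\Sp_{K(n)}$, every such finite Galois extension is split, so we may choose a $G$-equivariant $E_n$-algebra isomorphism $E_n\otimes R\simeq \Cnt(G,E_n)$. The residual $\Morex_n$-action on $E_n\otimes R$ commutes with the $G$-action, and under the chosen splitting it must permute the $G$-factors through a continuous homomorphism $\rho_R\colon\Morex_n\to G$; concretely, $\Aut_{G\tsm E_n\tsm\text{alg}}(\Cnt(G,E_n))\simeq G$, so $\rho_R$ is well-defined up to $G$-conjugation, exactly accounting for the ambiguity in the splitting. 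Pro-Galois descent then yields $R\simeq (E_n\otimes R)^{h\Morex_n}\simeq R_{\rho_R}$, and a symmetric argument shows $\rho_{R_\rho}=\rho$ up to conjugation, so the two assignments are mutually inverse.

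For the ``in particular'' clause, if $\rho$ is surjective with kernel $U\le\Morex_n$, then the $\rho$-twisted $\Morex_n$-module $\Cnt(G,E_n)$ is precisely the continuous coinduction of the $U$-module $E_n$ from $U$ to $\Morex_n$. A Shapiro-type identification gives
\[
    \Cnt(G,E_n)^{h\Morex_n}\;\simeq\;E_n^{hU},
\]
with the residual $G\simeq \Morex_n/U$ action. I expect the main obstacle in executing this cleanly is not any single step above, but rather the careful handling of the profinite topology on $\Morex_n$ and the continuous $\Morex_n$-action on $E_n$: the notion of continuous homotopy fixed points, the faithfulness of pro-Galois descent, and the spectral sequence computations on which the base-change arguments rely must all be imported from the framework of \cite{DH} and \cite{AkhilGalois} rather than re-derived here.
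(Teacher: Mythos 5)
The paper does not prove this proposition --- it is recalled, with paraphrase, directly from Mathew's \cite[Theorem 10.9]{AkhilGalois} --- so there is no in-paper argument to compare against. Your sketch is a faithful reconstruction of what that reference establishes: the two structural inputs you isolate (faithful continuous pro-$\Morex_n$-Galois descent along $\Sph_{K(n)} \to E_n$ from Devinatz--Hopkins--Rognes, and the triviality of the finite Galois theory of $E_n$ from Baker--Richter and Mathew) are exactly what drive the identification $\pi_1^{\rm{weak}}(\Sp_{K(n)})\simeq\Morex_n$, and your forward/inverse constructions together with the Shapiro identification for the ``in particular'' clause are the content of that theorem when unwound. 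One point worth making explicit in a full write-up: ``algebraic closedness'' of $E_n$ says it has no nontrivial \emph{connected} finite Galois extensions, while you need that every (possibly non-connected) $G$-Galois extension of $E_n$ is split; this follows from the standard structure result that a $G$-Galois extension of a ring with connected spectrum decomposes as an induction $\prod_{G/H}S'$ of a connected $H$-Galois extension $S'$ along some subgroup $H\le G$, so algebraic closedness forces $H$ trivial. A second point is that the residual $\Morex_n$-action you analyze on $\Cnt(G,E_n)$ is $E_n$-\emph{semilinear} (it covers the action on coefficients), whereas the ambiguity in the choice of splitting is by $E_n$-\emph{linear} $G$-equivariant automorphisms, which is the copy of $G$ acting by left translation; keeping these two roles separate is what makes $\rho_R$ well-defined up to conjugacy. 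Neither point is a gap --- both are implicit in your phrasing --- but a careful write-up following \cite{AkhilGalois} should spell them out.
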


We deduce that the Morava module of a Galois extension $R$ can be described in terms of the $\rho$-twisted action.

\begin{prop}
\label{inv_Gal_K(n)}
For $R\in \GalExt{\Sp_{K(n)}}{G}$ classified by $\rho \colon \Morex_n \to G$, the Morava module of $R$ is even, and there is an isomorphism of even Morava modules
\[
    \pi_0(E_n\otimes R)\simeq \Cnt(G,\pi_0(E_n)),
\] 
where on the left hand side the action of $\Morex_n$ is induced from the action on $E_n$ and the trivial action on $R$, and on the right hand side, it is the $\rho$-twisted action. In particular, the $\Morex_n$ action on $\pi_0(E_n \otimes R)$ determines $\rho$ up to conjugation.
\end{prop}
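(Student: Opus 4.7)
The plan is to combine the $K(n)$-local Galois correspondence (\Cref{Galois_corr_k(n)}) with Devinatz-Hopkins descent along $\Sph_{K(n)} \to E_n$. The upshot will be to replace $R$ by its standard model $\Cnt(G, E_n)^{h\Morex_n}$ and then ``undo'' the homotopy fixed points after base-changing to $E_n$.

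First I would invoke \Cref{Galois_corr_k(n)} to assume without loss of generality that $R = \Cnt(G, E_n)^{h\Morex_n}$, with $\Morex_n$ acting on $\Cnt(G, E_n)$ via the $\rho$-twisted action. The main computational step is then to establish an $\Morex_n$-equivariant equivalence
\[
    E_n \otimes R \simeq \Cnt(G, E_n)
    \qin \Mod_{E_n}(\Sp_{K(n)})^{B\Morex_n}.
\]
This is a descent statement: for surjective $\rho$ it reads $E_n \otimes E_n^{hU} \simeq \Cnt(\Morex_n/U, E_n) \simeq \Cnt(G, E_n)$ with $U = \ker\rho$ (open, since $\rho$ is continuous and $G$ is finite), which is exactly the pro-Galois property of $\Sph_{K(n)} \to E_n$ established in \cite{DH,RognesGal}; the general case reduces to this by decomposing $G$ into $\rho(\Morex_n)$-orbits. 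Taking $\pi_*$, since $\pi_{\mathrm{odd}} E_n = 0$, the Morava module of $R$ is even and
\[
    \pi_0(E_n \otimes R) \simeq \Cnt(G, \pi_0 E_n)
\]
as Morava modules equipped with the $\rho$-twisted action.

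For the final sentence, I would argue that an even finitely generated Morava module determines its underlying $E_n$-module (mapping spectra between evenly graded $E_n$-modules are themselves evenly graded), so the even Morava module $\pi_0(E_n\otimes R)$ together with its $\Morex_n$-action recovers $E_n \otimes R$. By Devinatz-Hopkins descent one has $R \simeq (E_n \otimes R)^{h\Morex_n}$, so $R$ itself is determined. Finally, \Cref{Galois_corr_k(n)} upgrades this into a statement about $\rho$: the isomorphism class of $R$ as a $G$-Galois extension pins down $\rho$ up to conjugation in $G$.

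The main technical obstacle is the descent equivalence $E_n \otimes \Cnt(G,E_n)^{h\Morex_n} \simeq \Cnt(G,E_n)$. The pro-Galois property $E_n \otimes E_n \simeq \Cnt(\Morex_n, E_n)$ of Devinatz-Hopkins is well established, but some care is required because $\Morex_n$ is only pro-finite, so homotopy fixed points need not commute with base-change to $E_n$ in general. It is the finiteness of $G$ (hence openness of $\ker\rho$) that confines the computation to a finite level of the pro-Galois tower, where the descent formula works without subtleties.
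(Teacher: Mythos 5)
Your treatment of the main isomorphism follows essentially the same route as the paper: invoke \Cref{Galois_corr_k(n)} to identify $R$ with $\Cnt(G,E_n)^{h\Morex_n}$, then observe that base-change along $\Sph_{K(n)} \to E_n$ converts the continuous fixed points into $\Cnt(G,E_n)$ (the paper cites \cite[Theorem 1(iii)]{DH} for this; your remark that openness of $\ker\rho$ makes the profiniteness harmless is exactly the relevant point), and then apply $\pi_*$. That part is fine.

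The final ``in particular'' is where you diverge, and there is a real gap. You argue: the even Morava module determines the $E_n$-module $E_n\otimes R$, then via descent $R\simeq (E_n\otimes R)^{h\Morex_n}$ determines $R$, then the Galois correspondence pins down $\rho$. The weak link is the first step. The parenthetical about mapping spectra between even $E_n$-modules being even controls the underlying $E_n$-module, but it does not by itself show that the \emph{coherent} (continuous) $\Morex_n$-action on $E_n\otimes R$ is determined by its shadow on $\pi_0$. That is a nontrivial algebraicity claim — precisely the kind of question that the exotic Picard group, for instance, shows can fail in general — and it would need a separate obstruction-theoretic argument to justify even for this particular (free, even) module. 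Moreover, to then apply \Cref{Galois_corr_k(n)} you would also need to recover $R$ \emph{as a $G$-Galois extension}, not merely as a spectrum, so the same question reappears for the $G$-action. None of this is needed: the paper's argument stays entirely at the algebraic level. Since $\pi_0(E_n\otimes R)\simeq \Cnt(G,\pi_0 E_n)$ is a finite product of copies of the local ring $\pi_0 E_n$, its set of closed points is a free transitive $G$-set (the right regular action), and the commuting $\rho$-twisted $\Morex_n$-action on this $G$-torsor is exactly the data of a conjugacy class of continuous homomorphisms $\Morex_n\to G$, which contains $\rho$. This sidesteps the question of reconstructing the spectrum $R$ altogether, and is both shorter and free of the algebraicity assumption your version relies on.
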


\begin{proof}
By \Cref{Galois_corr_k(n)}, we have 
\(
    R\simeq \Cnt(G,E_n)^{h\Morex_n}.
\) 
By \cite[Theorem 1(iii)]{DH}, we have a canonical $G$-equivariant isomorphism
\[
     \pi_*(E_n \otimes \Cnt(G,E_n)^{h\Morex_n}) \iso 
    C(G, C(\GG_n, \pi_* E_n))^{hG} \simeq
    \pi_*\Cnt(G,E_n),
\]
so in particular this holds on the level of $\pi_0$.

\end{proof}

\subsubsection{The $p$-adic cyclotomic character}

By \Cref{Galois_corr_k(n)}, the cyclotomic extensions $\cyc[\Sph_{K(n)}]{p^r}{n}$ are classified by a sequence of homomorphisms  
$\chi_r \colon \Morex_n \to (\ZZ/p^r)^\times$. Since $\chi_{r+1}$ identifies with $\chi_r$ upon reduction modulo $p^r$ for every $r\ge 0$, these assemble into a single continuous group homomorphism \[
    \chi\colon \Morex_n \to 
    \ZZ_p^\times \simeq \invlim_{r\in\NN}\, (\ZZ/p^r)^\times.
\]
The map $\chi$ thus classifies, via the $K(n)$-local Galois correspondence, the infinite cyclotomic extension
\[
    \cyc[\Sph_{K(n)}]{p^\infty}{n} =
    \colim_{r\in\NN}\,\, \cyc[\Sph_{K(n)}]{p^r}{n}.
\]

\begin{defn}\label{Def_Cyclo_Char_Kn}
    We refer to 
    $\chi\colon\Morex_n\to \ZZ_p^\times$
    as the \textbf{$p$-adic cyclotomic character} of $\Sp_{K(n)}$. 
\end{defn}

We would like to describe the $p$-adic cyclotomic character in terms of the description of $\Morex_n$ as the group of automorphisms of the formal group $\cl{\Gamma}$. The case of an odd prime $p$ for the Honda formal group law was carried out in \cite{Westerland}. The general case follows similarly using the results of \cite{AmbiKn} expressing the $K(n)$-homology of Eilenberg-Maclane spaces in terms of alternating powers of the associated formal group following \cite{ravenel1980morava}. For completeness, we shall provide the details of the argument.

From now on, we shall consider $\cl{\Gamma}$ as a connected $p$-divisible group $\cl{\Gamma} = \colim \cl{\Gamma}[p^r]$ with $\cl{\Gamma}[p^r]$ the corresponding finite flat group schemes of $p^r$-torsion. As in  \cite[Construction 3.2.1]{AmbiKn}, for a finite flat group scheme $G$ and an integer $d\ge 1$, one associates a finite flat group scheme $\Alt_G^{(d)}$ called the $d$-th alternating power of $G$.  Again as in \cite[Corollarly 3.5.4]{AmbiKn}, given a $p$-divisible group $\Upsilon$ of dimension $1$, we can now assemble the finite flat group schemes of $p$-power torsion $\Alt_{\Upsilon[p^r]}^{(d)}$ to a $p$-divisible group \[
\Alt^{(d)}_{\Upsilon} := \colim \Alt^{(d)}_{\Upsilon[p^r]}.
\]
Moreover, when $\Upsilon$ is of height $m$ the $p$-divisible group $\Alt^{(d)}_{\Upsilon}$ is of height $\binom{m}{d}$ and dimension $\binom{m-1}{d}$.
In particular, when $d=m$ the $p$-divisible group $\Alt_{\Upsilon}^{(m)}$ is \'{e}tale of height $1$. 

Returning to our $p$-divisible group $\cl{\Gamma}$, the top alternating power $\Alt^{(n)}_{\cl{\Gamma}}$ is an \'{e}tale $p$-divisible  group of height $1$ over the algebraically closed field $\cl{\FF}_p$, and hence its $\cl{\FF}_p$-points identify non-canonically with the group $\QQ_p/\ZZ_p$. As a result there is a \emph{canonical} isomorphism 
\[
    \aut(\Alt^{(n)}_{\cl{\Gamma}}(\cl{\FF}_p))\simeq \ZZ_p^\times.
\]
 
The group $\Morex_n$ acts on both $\Alt^{(n)}_{\cl{\Gamma}}$ (by functoriality) and on $\cl{\FF}_p$ via $\pi\colon \Morex_n \onto \Gal(\FF_p)$, hence it acts on the group of $\cl{\FF}_p$-points $\Alt^{(n)}_{\cl{\Gamma}}(\cl{\FF}_p)$.

\begin{prop}\label{cyclo_char_alt_power}
    The cyclotomic character $\chi\colon \Morex_n \to \ZZ_p^\times$ identifies with the map
    \[
        \Morex_n \longrightarrow \aut(\Alt^{(n)}_{\cl{\Gamma}}(\cl{\FF}_p)) \simeq \ZZ_p^\times,
    \]
    classifying the action discussed above.
\end{prop}

\begin{proof}

    It suffices to show that for each $r \ge 0$ the map in the statement agrees with $\chi$ after reduction modulo $p^r$, which we denote by 
    $\chi_r \colon \Morex_n \to (\ZZ/p^r)^\times$. 
    Via the Galois correspondence $\chi_r$ corresponds to the finite cyclotomic extension $\cyc[\Sph_{K(n)}]{p^r}{n}$. Hence, by \Cref{inv_Gal_K(n)}, we have a $\Morex_n$-equivariant isomorphism
    \[
        \pi_0(E_n\otimes \cyc[\Sph_{K(n)}]{p^r}{n}) \simeq C((\ZZ/p^r)^\times,\pi_0E_n),
    \]
    where on the right hand side we have the so-called $\chi_r$-twisted action. By collecting together the terms in the decomposition
    \[
        \Sph_{K(n)}[B^nC_{p^r}] \simeq
        \cyc[\Sph_{K(n)}]{p^0}{n} \oplus
        \cyc[\Sph_{K(n)}]{p^1}{n} \oplus \dots \oplus
        \cyc[\Sph_{K(n)}]{p^r}{n},
    \]
    and reducing modulo the maximal ideal of $\pi_0(E_n)$
    we similarly get a $\Morex_n$-equivariant isomorphism
    \[
        \pi_0(K(n) \otimes \Sph_{K(n)}[B^nC_{p^r}]) \simeq C(\ZZ/p^r,\cl{\FF}_p),
    \]
    where again on the right hand side we have 
    the $\chi_r$-twisted action. 
    
    On the other hand, by \cite[Theorem 2.0.1]{AmbiKn} there is a $\Morex_n$-equivariant isomorphism 
    \[
    \pi_0(K(n)\otimes \Sph_{K(n)}[B^nC_{p^r}]) \simeq\mathcal{O}(\Alt_{\cl{\Gamma}[p^r]}^{(n)}),
    \]
    where $\mathcal{O}(-)$ stands for the algebra of regular functions on a scheme.     
    Since $\Alt_{\cl{\Gamma}[p^r]}^{(n)}$ is an \'{e}tale finite flat group scheme over $\cl{\FF}_p$, its algebra of regular functions can be described as 
    \[
        \mathcal{O}(\Alt_{\cl{\Gamma}}^{(n)}[p^r])\simeq C(\Alt_{\cl{\Gamma}}^{(n)}[p^r](\cl{\FF}_p),\cl{\FF}_p)\simeq C(\ZZ/p^r,\cl{\FF}_p), 
    \]
    and this isomorphism is $\Morex_n$-equivariant, where the action on the right is twisted by the reduction modulo $p^r$ of the map in the statement. 
\end{proof}

The above result provides  a completely algebraic description of the cyclotomic character $\chi$. 
To compute it more explicitly, we need to recall some facts about the structure of the group $\Morex_n$. Let $\mathcal{D}_n$ be a division algebra over $\QQ_p$ of invariant $1/n\in\QQ/\ZZ$, and $\mathcal{O}_n\sseq\mathcal{D}_n$ the maximal order. The group of units $\mathcal{O}_n^\times \subseteq \mathcal{O}_n$ is isomorphic to $\aut(\cl{\Gamma}/\cl{\FF}_p)$, which is the kernel of $\Morex_n \xonto{\pi} \Gal(\cl{\FF}_p/\FF_p)$. 
As for any finite dimensional division algebra, there is a determinant (a.k.a reduced norm) multiplicative map $\det\colon \mathcal{D}_n \to \QQ_p$, which restricts to a group homomorphism $\det\colon\mathcal{O}_n^\times \to \ZZ_p^\times.$

\begin{thm}\label{Cyc_Char_K(n)}
The restriction of the $p$-adic cyclotomic character 
\(
    \chi \colon \Morex_n \to 
      \ZZ_p^\times
\)
to the subgroup 
$\mathcal{O}_n^\times \triangleleft \Morex_n$ 
is the determinant map.
\end{thm}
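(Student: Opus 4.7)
My plan is to compute, for each $r$, the finite-level character $\chi_r\colon \Morex_n \to (\ZZ/p^r)^\times$ corresponding to $\cyc[\Sph_{K(n)}]{p^r}{n}$, show that on $\mathcal{O}_n^\times$ it agrees with the composition $\mathcal{O}_n^\times \oto{\det} \ZZ_p^\times \onto (\ZZ/p^r)^\times$, and then pass to the inverse limit. By the $K(n)$-local Galois correspondence (\Cref{Galois_corr_k(n)}) together with the Morava module description (\Cref{inv_Gal_K(n)}), the character $\chi_r$ is determined (and, since $(\ZZ/p^r)^\times$ is abelian, determined \emph{on the nose}) by the $\Morex_n$-action on the set of closed points of $\Spec\pi_0(E_n\otimes \cyc[\Sph_{K(n)}]{p^r}{n})$. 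By \Cref{Cyclo_Rep_Primitive} and \Cref{En_Split_Galois}, this set is canonically the $(\ZZ/p^r)^\times$-torsor of primitive height $n$ $p^r$-th roots of unity of $E_n$, so everything reduces to understanding how $\mathcal{O}_n^\times\le \Morex_n$ acts on this torsor.

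To make the torsor concrete, I would apply the $\Morex_n$-equivariant isomorphism $E_n[B^nA] \simeq E_n^{A^*}$ of \cite[Corollary 5.3.26]{AmbiKn} to $A=C_{p^r}$, working with respect to a fixed normalizable formal group law. This produces an $\Morex_n$-equivariant splitting $\pi_0E_n[B^nC_{p^r}] \simeq \prod_{C_{p^r}^*}\pi_0 E_n$ that is compatible, via naturality in $A$, with the quotient $C_{p^r}\onto C_{p^{r-1}}$ and hence with the idempotent of \Cref{Transfer_Idempotent}. Under this compatibility, the splitting of \Cref{En_Split_Galois} is the restriction to those factors indexed by the generators of $C_{p^r}^*$. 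Thus $\chi_r|_{\mathcal{O}_n^\times}$ is simply the character describing how $\mathcal{O}_n^\times$ permutes these generators.

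The main step, and the point I expect to be the principal obstacle, is to identify this $\mathcal{O}_n^\times$-action on generators of $C_{p^r}^*$ with the reduction of $\det$ modulo $p^r$. Unwinding \cite[Corollary 5.3.26]{AmbiKn}, the indexing set $C_{p^r}^*$ arises as the group of $\QQ_p/\ZZ_p$-valued characters of the top exterior power $\Lambda^n_{\ZZ_p}T_p\cl{\Gamma}$ of the Tate module of the Lubin--Tate formal group; since $\mathcal{O}_n^\times \sseq \mathrm{End}(T_p\cl{\Gamma}) \simeq M_n(\ZZ_p)$ acts on a rank-one $\ZZ_p$-module through $\Lambda^n$, this action is, by the very definition of the reduced norm, multiplication by $\det$. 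The careful bookkeeping of this normalization is exactly why the normalizable hypothesis is needed, and is the delicate part of the argument. Granted this identification, $\chi_r|_{\mathcal{O}_n^\times}$ equals $\det \bmod p^r$ for every $r$, and passing to the inverse limit (using \Cref{Infinite_Transfer_Idempotent} to assemble the cyclotomic tower) yields $\chi|_{\mathcal{O}_n^\times} = \det$, as desired.
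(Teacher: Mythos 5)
Your overall strategy—recover $\chi_r$ from the $\Morex_n$-action on the torsor of primitive height $n$ roots of unity of $E_n$, exploit the $\Morex_n$-equivariance and naturality of the isomorphism from \cite[Corollary 5.3.26]{AmbiKn}, and pass to the inverse limit—is sound and close in spirit to the paper's argument (which works directly with $\one[B^{n+1}\ZZ_p]$ at the infinite level and then splits via \Cref{Infinite_Transfer_Idempotent}). The problem is in the step you yourself flag as the main obstacle: your sketch of why the $\mathcal{O}_n^\times$-action on the indexing group is $\det$ is not correct as written.

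You describe $C_{p^r}^*$ as arising from ``the top exterior power $\Lambda^n_{\ZZ_p} T_p\cl{\Gamma}$ of the Tate module'' with $\mathcal{O}_n^\times \sseq \mathrm{End}(T_p\cl{\Gamma}) \simeq M_n(\ZZ_p)$, so that $\Lambda^n$ recovers the reduced norm ``by definition.'' But $\cl{\Gamma}$ is a formal group over $\cl{\FF}_p$, hence a \emph{connected} $p$-divisible group: its Tate module of $\cl{\FF}_p$-points is zero, so $\Lambda^n T_p\cl{\Gamma}$ is not the object in play. Moreover $\mathrm{End}(\cl{\Gamma}) = \mathcal{O}_n$ is a maximal order in the division algebra $\mathcal{D}_n$, which is not a subring of $M_n(\ZZ_p)$; there is no rank-$n$ free $\ZZ_p$-module with an $\mathcal{O}_n$-action for which the reduced norm is literally a top exterior power in the naive sense. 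The object that actually appears is the \emph{alternating power} $p$-divisible group $\mathrm{Alt}^{(n)}_\Gamma$ in the sense of Hedayatzadeh--Peterson (cf.\ \cite[Corollary 3.5.4]{AmbiKn}), which is an {\'e}tale height-$1$ $p$-divisible group over $\FF_p$ with $\mathrm{Alt}^{(n)}_\Gamma(\cl{\FF}_p)\simeq \QQ_p/\ZZ_p$; this construction is meaningful in characteristic $p$ precisely because it is \emph{not} an exterior power of a Tate module. The identification of the induced $\mathcal{O}_n^\times$-action on $\mathrm{Alt}^{(n)}_\Gamma$ with the reduced norm is a genuine theorem—this is exactly the content of Westerland's \cite[Proposition 3.21]{Westerland} (for $p$ odd, building on Peterson) and \cite[Theorem 3.4.1]{AmbiKn} (for $p=2$)—and the paper cites it rather than re-deriving it. Your proof would be complete if you replaced the Tate-module reasoning in your main step with a citation of these results, at which point your finite-level argument and the paper's infinite-level argument become essentially the same.
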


\begin{proof}
    By \Cref{cyclo_char_alt_power}, we have to show that the action of $\mathcal{O}_n^\times\subseteq \Morex_n$ on $\Alt_{\cl{\Gamma}}^{(n)}(\cl{\FF}_p) \simeq \ZZ_p^\times$ is via the determinant map.
    Recall that to the $p$-divisible group $\cl{\Gamma}$ over $\cl{\FF}_p$ we can associate its \emph{Dieudonn\'e module} $\DM(\cl{\Gamma})$, which is in particular a free $\WW(\cl{\FF}_p)$-module of rank $n$ (see \cite[\S1.3]{AmbiKn}). 
    The action of $\mathcal{O}_n\simeq \End_{\cl{\FF}_p}(\cl{\Gamma})$ on the Dieudonn\'e module  $\DM(\cl{\Gamma})$ gives rise to a $\ZZ_p$-algebra map 
    \[
    i\colon \mathcal{O}_n \to \End_{\WW(\cl{\FF}_p)}(\DM(\cl{\Gamma})) \simeq \Mat_{n\times n}(\WW(\cl{\FF}_p)).
    \]
    Extending scalars along $\ZZ_p \to \QQ_p$ we get a $\QQ_p$-algebra map 
    \[
    \mathcal{D}_n \to \End_{\WW(\cl{\FF}_p)}(\DM(\cl{\Gamma})) \otimes \QQ_p \simeq \Mat_{n\times n}(\widehat{\QQ}_p^{\mathrm{ur}}),
    \]
    where $\widehat{\QQ}_p^\mathrm{ur} = \WW(\cl{\FF}_p)[1/p]$ is the completion of the maximal unramified extension of $\QQ_p$. 
    Since $\mathcal{D}_n$ is a central division algebra of dimension $n^2$ over $\QQ_p$, after extending scalars on the source along $\QQ_p \to \widehat{\QQ}_p^\mathrm{ur}$, we get an isomorphism of $\widehat{\QQ}_p^\mathrm{ur}$-algebras
    \[
        \mathcal{D}_n\otimes_{\QQ_p}\widehat{\QQ}_p^{\mathrm{ur}} \iso \Mat_{n\times n}(\widehat{\QQ}_p^{\mathrm{ur}}).
    \]
    
    By the definition of the reduced norm, the map $\det\colon \mathcal{D}_n \to \QQ_p^\times$ is therefore the restriction of the ordinary determinant
    $\det \colon \Mat_{n\times n}(\widehat{\QQ}_p^{\mathrm{ur}})\to \widehat{\QQ}_p^{\mathrm{ur}}$ along the inclusion $\mathcal{D}_n\into \Mat_{n\times n}(\widehat{\QQ}_p^{\mathrm{ur}})$ above. Since the determinant of a matrix is given by its action on the top alternating power of a vector space, we deduce that the map 
    \[
        \det\colon \mathcal{O}_n^\times\to \ZZ_p^\times \into \WW(\cl{\FF}_p)^\times
    \] 
    can be written as the composition 
    \[
        \mathcal{O}_n^\times = \aut_{\cl{\FF}_p}(\cl{\Gamma}) \oto{\DM} \aut_{\WW(\cl{\FF}_p)}(\DM(\cl{\Gamma})) \oto{\wedge^n(-)^\vee} \aut_{\WW(\cl{\FF}_p)}(\wedge^n\DM(\cl{\Gamma})^\vee) \simeq \WW(\cl{\FF}_p)^\times.  
    \]
    
    Finally, by \cite[Theorem 3.3.1]{AmbiKn} we have a natural identification 
    \[
        \wedge_{\WW(\cl{\FF}_p)}^n\DM(\cl{\Gamma})^\vee \simeq \DM(\Alt^{(n)}_{\cl{\Gamma}}).
    \]
    We deduce that the action of $\mathcal{O}_n$ on the Dieudonn\'e module $\DM(\Alt^{(n)}_{\cl{\Gamma}})$ is via the determinant map. Since the map 
    \[
        \ZZ_p^\times \simeq \aut(\Alt^{(n)}_{\cl{\Gamma}}) \oto{\DM} \aut(\DM(\Alt^{(n)}_{\cl{\Gamma}}))\simeq \WW(\cl{\FF}_p)^\times 
    \] 
    is the canonical inclusion, this implies that the action of $\mathcal{O}_n^\times$ on $\Alt_{\cl{\Gamma}}^{(n)}$ is via the determinant map as well. 
\end{proof}

\Cref{Cyc_Char_K(n)} identifies the $p$-adic cyclotomic character 
$\chi\colon \Morex_n \to \ZZ_p^\times$
only on the kernel of the map
\(
    \pi \colon \Morex_n \onto 
    \Gal(\cl{\FF}_p/\FF_p) \simeq
    \widehat{\ZZ}. 
\)
However, it is possible to identify $\chi$ on the entire group $\Morex_n$ as well.
The choice of $\Gamma$ (namely, the choice of an $\FF_p$-form of $\cl{\Gamma}$) yields a section of $\pi$, and hence, a semidirect product decomposition 
\(
    \Morex_n \simeq
    \widehat{\ZZ} \ltimes \mathcal{O}_n^\times.
\)
It therefore remains to identify the restriction of $\chi$ to the subgroup 
$\widehat{\ZZ} \leq \Morex_n$ 
under this decomposition, which we denote by 
\[
    \chi_{\gal}\colon \widehat{\ZZ} \longrightarrow \ZZ_p^\times.
\]

While the $p$-divisible group $\Alt_{\cl{\Gamma}}^{(n)}$ is isomorphic to the constant $p$-divisible group $\underline{\QQ_p/\ZZ_p}$, this is no longer necessarily the case for $\Alt_{\Gamma}^{(n)}$. In fact, the isomorphism class of $\Alt_{\Gamma}^{(n)}$ depends on $\Gamma$, and might or might not be split. 
In general, $\Alt_{\Gamma}^{(n)}$ is an $\FF_p$-form of $\underline{\QQ_p/\ZZ_p}$, and therefore corresponds to a continuous cohomology class in
\[
    H^1_c(\Gal(\FF_p),\aut(\underline{\QQ_p/\ZZ_p}))\simeq \hom_c(\Gal(\FF_p),\ZZ_p^\times).
\]

By the classical theory of Galois forms, we have the following:
\begin{prop}\label{Cyclo_Char_Gal}
    The cohomology class classifying $\Alt_\Gamma^{(n)}$ is  $\chi_\gal\colon \widehat{\ZZ} \to \ZZ_p^\times$.  
\end{prop}

\begin{proof}
    By \Cref{cyclo_char_alt_power}, the group $\Gal(\FF_p)\simeq\widehat{\ZZ}$ acts on the $\cl{\FF}_p$-points of $\Alt_{\cl{\Gamma}}^{(n)}$ via $\chi_\gal$. By inspecting the construction of the cohomology class corresponding to an $\FF_p$-form, this action is given by the mentioned cohomology class.  
\end{proof}

Combining \Cref{Cyc_Char_K(n)} with \Cref{Cyclo_Char_Gal} we get a complete algebraic description of the $p$-adic cyclotomic character 
\[
    \chi \colon 
    \Morex_n \simeq 
    \widehat{\ZZ} \ltimes \mathcal{O}_n^\times \to
    \ZZ_p^\times.
\]
Namely, 
\[
    \chi(u,a) = \det(a) \chi_\gal(a)
    \quad \text{for all}\quad a\in \mathcal{O}_n^\times,\ u\in \widehat{\ZZ},
\]
where $\chi_\gal$ is as in \Cref{Cyclo_Char_Gal}.

\begin{example}
Assume that $\Gamma$ is \emph{normalizable} in the sense of \cite[Definition 5.3.1]{AmbiKn}. That is, we have an isomorphism $\Alt_\Gamma^{(n)}\simeq \underline{\QQ_p/\ZZ_p}$ defined over $\FF_p$. In this case, with respect to the splitting  
$\Morex_n \simeq\widehat{\ZZ} \ltimes \mathcal{O}_n^\times$
defined by $\Gamma$, the map $\chi_\gal$ is trivial and 
\[
    \chi(u,a) = \det(a) \quad \text{for all}\quad a\in \mathcal{O}_n^\times,\ u\in \widehat{\ZZ}.
\]
\end{example}

The next example is a reformulation of a computation carried out in \cite[Proposition 3.20]{Westerland}.
\begin{example}[Westerland]
    Let $p$ be an odd prime and let $\Gamma$ be the Honda formal group law of height $n$ over $\FF_p$. The form $\Alt_{\Gamma}^{(n)}$ is classified in this case by the cocycle $\chi_\gal(u) = (-1)^{u(n-1)}$. Consequently, with respect to the splitting 
    \[\Morex_n \simeq
        \widehat{\ZZ} \ltimes \mathcal{O}_n^\times
    \]    
    defined by $\Gamma$, the cyclotomic character $\chi$ is given by 
    \[
        \chi(u,a) = (-1)^{u(n-1)}\det(a) \quad \text{for all} \quad a\in \mathcal{O}_n^\times,\ u\in \widehat{\ZZ}.
    \]
    This is the map denoted by $\det_\pm$ in \cite[\S 1.1]{Westerland}. Namely, for $n$ even the Honda formal group is not normalizable, which introduces the sign factor in $\det_{\pm}$.
\end{example}


For future use, we record here a mild variation on \cite[Lemma 1.33]{bobkova2018topological} regarding the fixed points of the action of $\Morex_n$ on the ring $\pi_0E_n$.

\begin{prop}
\label{Morex_Fixed_Points}
    Let $N\triangleleft\Morex_n$ be the kernel of the cyclotomic character $\chi\colon \Morex_n \to \ZZ_p^\times$. We have
    \[
        (\pi_0E_n)^N = 
        \ZZ_p \sseq 
        \pi_0E_n.
    \]
\end{prop}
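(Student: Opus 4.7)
The plan is to identify $N$ explicitly as a familiar subgroup of $\Morex_n$ and then deduce the result by direct citation of \cite[Lemma 1.33]{bobkova2018topological}, which computes the analogous ring of invariants for a slightly different but closely related subgroup.

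First, I would unpack $\chi$ using \Cref{Cyc_Char_K(n)} together with the discussion immediately following it, which describes $\chi$ on both factors of $\Morex_n \simeq \widehat{\ZZ} \ltimes \mathcal{O}_n^\times$. Choosing a normalization of $\Gamma$ in the sense of \cite[Definition 5.3.1]{AmbiKn} arranges $\chi_\gal \equiv 1$, so that $\chi$ becomes the composition of the projection onto $\mathcal{O}_n^\times$ with the reduced norm $\det$. Accordingly, $N$ is identified with the subgroup
\[
    \widehat{\ZZ} \ltimes \mathbb{S}_n^1
    \quad\subseteq\quad
    \widehat{\ZZ} \ltimes \mathcal{O}_n^\times \simeq \Morex_n,
\]
where $\mathbb{S}_n^1 := \ker(\det \colon \mathcal{O}_n^\times \to \ZZ_p^\times)$ is the strict Morava stabilizer group. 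This is precisely the subgroup considered in \cite[Lemma 1.33]{bobkova2018topological}, where it is shown that its ring of continuous invariants on $\pi_0 E_n$ equals $\ZZ_p$. Applied to our $N$, this yields the desired identification.

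For a non-normalized $\Gamma$, the only change is that $N$ is twisted by a 1-cocycle $\widehat{\ZZ} \to \ZZ_p^\times$; since $\ZZ_p^\times$ sits centrally inside $\mathcal{O}_n^\times$ and acts trivially on the subring $\ZZ_p \subseteq \pi_0 E_n$, this twist is invisible on fixed points and the same conclusion holds. The main obstacle is thus purely bookkeeping, namely matching our character $\chi$ with the determinant character used by Bobkova; no new calculation is required beyond applying her lemma.
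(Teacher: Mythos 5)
Your plan reduces everything to a direct citation of \cite[Lemma 1.33]{bobkova2018topological}, but this is precisely the step that cannot be taken at face value, which is why the paper supplies a full proof and flags the result as a ``mild variation'' of that lemma rather than an application of it. There are two concrete obstructions. First, that reference is specific to height $n=2$ at the prime $p=2$; the lemma you want to cite is not stated (and is not immediate) for arbitrary $(n,p)$, which is exactly what is needed here. Second, even setting aside the question of generality, the subgroup appearing in the Bobkova--Goerss setting is the kernel of a map $\Morex_n \to \ZZ_p$ (a norm-type map landing in the additive group), whereas $N$ here is the kernel of $\chi\colon \Morex_n \to \ZZ_p^\times$. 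The former contains the latter strictly, since the composite $\ZZ_p^\times \to \ZZ_p$ kills the torsion in $\ZZ_p^\times$. Consequently $(\pi_0 E_n)^{\ker(\Morex_n \to \ZZ_p)} \subseteq (\pi_0 E_n)^N$, so knowing the invariants for the larger group are $\ZZ_p$ only gives $\ZZ_p \subseteq (\pi_0 E_n)^N$, which is the trivial inclusion. You need the reverse inclusion, which requires redoing the argument for the smaller group $N$, not citing the result for the larger one.

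The paper's actual proof fills this in as follows: after reducing to $(\pi_0 E_n[\inv{p}])^N = \QQ_p$, it invokes the $\mathcal{O}_n^\times$-equivariant embedding $\pi_0 E_n[\inv{p}] \hookrightarrow \widehat{\QQ}_p^{\mathrm{ur}}[[w_1,\dots,w_{n-1}]]$ of Devinatz--Hopkins, with the explicit action of $W(\FF_{p^n})^\times$ on the $w_i$; it then shows that a power series fixed by $W(\FF_{p^n})^\times \cap N$ must be constant (using $\det = \Nm$ on $W(\FF_{p^n})^\times$, a $p$-adic logarithm, and independence of characters), and finally produces an element of $N$ acting as Frobenius on $\widehat{\QQ}_p^{\mathrm{ur}}$ and applies Ax--Sen--Tate. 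Your bookkeeping identification $N = \widehat{\ZZ} \ltimes \mathbb{S}_n^1$ (after normalization) is correct, and your observation about the non-normalized case is moot since $\chi$ and hence $N$ do not depend on the choice of $\Gamma$ (only the semidirect product decomposition does), but the identification alone does not make the needed invariant computation available by citation; the computation has to be carried out, as the paper does.
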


\begin{proof}  
    Recall that $\pi_0(E_n)=W(\cl{\FF}_p)[[u_1,\dots,u_{n-1}]]$, and that $W(\cl{\FF}_p)[\inv{p}]$ is isomorphic to $\widehat{\QQ}_p^{\mathrm{ur}}$, the completion of the maximal unramified extention of $\QQ_p$.
    Since $\pi_0 E_n$ is torsion free, it embeds in $\pi_0E_n[\inv{p}]$. Therefore, it suffice to show that $(\pi_0 E_n[\inv{p}])^N = \QQ_p$.
    Consider the subgroup $\mathcal{O}_n^\times \le \Morex_n$,
    and recall that the algebra $\mathcal{O}_n$ has the following presentation:
    \[
        \mathcal{O}_n \simeq 
        W(\FF_{p^n})\{S\}/(S^n=p,\,Sx=\varphi(x)S\quad\forall x\in W(\FF_{p^n})),
    \]
    where $S$ is a non-commutative indeterminate and $\varphi\colon W(\FF_{p^n}) \to W(\FF_{p^n})$ is the (unique) lift of the Frobenius endomorphism of $\FF_{p^n}$. 
    By \cite[Proposition 3.3]{DHAct}, we have an $\mathcal{O}_n^\times$-equivariant embedding\footnote{This embedding exhibits the target as the completion of the source with respect to its unique maximal ideal. We also remark that the $w_i$-s do not belong to the image of $\pi_0E_n$.}
    \[
        \pi_0 E_n[\inv{p}] \into
        \widehat{\QQ}_p^{\mathrm{ur}}[[w_1,...,w_{n-1}]],
    \]
    such that the action of $\mathcal{O}_n^\times$ on the right hand side is $\widehat{\QQ}_p^{\mathrm{ur}}$-linear, and each $x\in W(\FF_{p^n})^\times \le \mathcal{O}_n^\times$ acts on a power series $f=f(w_1,\dots,w_{n-1})$ by 
    \[
        (x\cdot f)(w_1,...,w_{n-1}) = f(\frac{\varphi(x)}{x}w_1,\dots,\frac{\varphi^{n-1}(x)}{x}w_{n-1}).
    \]
    It will suffice to show that
    \[
        \widehat{\QQ}_p^{\mathrm{ur}}[[w_1,...,w_{n-1}]]^N = \QQ_p. 
    \]
    Consider now the subgroup  
    \[
        W^{(1)}(\FF_{p^n})^\times := 
        W(\FF_{p^n})^\times \cap N\le 
        \mathcal{O}_n^\times.
    \]
   If $f$ is fixed by $N$, and hence by $W^{(1)}(\FF_{p^n})^\times$, the only monomials $w_1^{d_1}w_2^{d_2}\cdots w_{n-1}^{d_{n-1}}$, that can appear in $f$ with non-zero coefficients, are those for which
    \[
        x^{d_1+d_2+...+d_{n-1}} = \varphi(x)^{d_1}\varphi^2(x)^{d_2}\cdots\varphi^{n-1}(x)^{d_{n-1}}, 
        \quad \forall x\in W^{(1)}(\FF_{p^n})^\times.
    \]

    For a general element $x\in W(\FF_{p^n})^\times \le \mathcal{O}_n^\times,$ the determinant $\det(x)$ coincides with the norm
    $\Nm(x):=\prod_{i=0}^{n-1}\varphi^i(x).$
    Taking $p$-adic logarithm on the above displayed formula, this implies that the equation
    \[
        (d_1+d_2+\dots+d_{n-1})y = 
        d_1\varphi(y)+d_2\varphi^2(y)+\dots+d_{n-1}\varphi^{n-1}(y) \tag{$*$}
    \]
    holds for every $y \in W(\FF_{p^n})$ with  $\tr(y) = \sum_{i=0}^{n-1}\varphi^i(y) = 0$ and a sufficiently high $p$-adic valuation. Since $(*)$ is a linear equation, it in fact holds for all $y\in \QQ_p(\omega_{p^n-1}) = W(\FF_{p^n})[\inv{p}]$, such that $\tr(y) = 0$.
    We deduce, by the linear independence of the $\varphi^i$-s, that $d_1=\dots=d_{n-1}=0$. This means that
    $f$ has to be constant, i.e., an element of $\widehat{\QQ}_p^{\mathrm{ur}}\sseq \pi_0E_n[\inv{p}]$.
    
    Finally, we have a semi-direct product decomposition
    $\Morex_n \simeq  \widehat{\ZZ}\ltimes \mathcal{O}_n^\times,$
    by which we identify the topological generator $1\in\widehat{\ZZ}$ with an element $\sigma \in \Morex_n$.
    Since $\det = \Nm\colon W(\FF_{p^n})^\times \to \ZZ_p^\times$ is surjective (\cite[Proposition III.1.2]{milne1997class}), there exists an element 
    $a\in W(\FF_{p^n})^\times$, 
    with $\det(a) = \det(\sigma).$
    Thus, we get an element $a^{-1}\sigma \in N$, which acts on $\widehat{\QQ}_p^{\mathrm{ur}} \sseq \pi_0E_n[\inv{p}]$ as the Frobenius (see \cite[\S3.2.2]{barthel2019chromatic}). By  the Ax-Sen-Tate theorem, the fixed points of $a^{-1}\sigma$ on $\widehat{\QQ}_p^\mathrm{ur}$ are $\QQ_p \sseq \pi_0E_n[\inv{p}]$ (\cite{ax1970zeros}).
\end{proof}

\subsubsection{The total cyclotomic character}
We conclude this subsection by discussing the Galois extensions classified by the map $\pi \colon \Morex_n \onto \widehat{\ZZ}$ from \Cref{Morava_Group}. Roughly speaking, $\pi$  classifies the \emph{ordinary}, i.e. height 0, cyclotomic extensions of $\Sph_{K(n)}$ of order prime to $p$ (see \Cref{Unramified_K(n)_Cyclo} for the precise statement). This perspective is originally due to Rognes (see \cite[\S5.4.6]{RognesGal}) and we review it for completeness.  

We begin by considering the Galois extensions of the  $p$-complete sphere $\Sph_p \in \Sp$. Since $\Sph_p$ is connective, by \cite[Theorem 6.17]{AkhilGalois}, all Galois extensions of $\Sph_p$ (i.e. of $\Mod_{\Sph_p}$) are \emph{algebraic}. Namely, they are {\'e}tale and, by applying $\pi_0$, correspond bijectively to the (ordinary) Galois extensions of the ring $\pi_0(\Sph_p)=\ZZ_p.$
The Galois extensions of $\ZZ_p$ are in turn classified by the Galois group 
\[
    \Gal(\ZZ_p) \simeq \Gal(\FF_p) \simeq \widehat{\ZZ}.
\]
More concretely, the finite quotients $\widehat{\ZZ}\onto\ZZ/m$ correspond to the rings of Witt vectors $W(\FF_{p^m})$ with the action given by the lift of Frobenius. Hence, the corresponding Galois extensions of $\Sph_p$ are the rings of \emph{spherical} Witt vectors $\Sph W(\FF_{p^m})$, which are characterized by being {\'e}tale over $\Sph_p$ and having 
$\pi_0(\Sph W(\FF_{p^m}))\simeq W(\FF_{p^m})$
(see \cite[Example 5.2.7]{Lurie_Ell2}).

\begin{prop}\label{K(n)_Local_Witt}
    For every $m\in\NN$, the composition
    \[
        \Morex_n \oto{\pi} 
        \widehat{\ZZ} \onto \ZZ/m
    \]
    classifies the $\ZZ/m$-Galois extension 
    $L_{K(n)}\Sph W(\FF_{p^m})$
    of $\Sph_{K(n)}$.
\end{prop}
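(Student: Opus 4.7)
The plan is to apply the $K(n)$-local Galois correspondence (\Cref{Galois_corr_k(n)}) via the Morava-module criterion (\Cref{inv_Gal_K(n)}): I first verify that $R := L_{K(n)}\Sph W(\FF_{p^m})$ is a $\ZZ/m$-Galois extension of $\Sph_{K(n)}$, then compute its even Morava module and identify the induced $\Morex_n$-action as the $\rho$-twisted action for $\rho := \pi \bmod m$.

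The Galois property follows as in \Cref{Cyclo_Galois}, by means of \Cref{Galois_Nil}. Since $\Sph W(\FF_{p^m})$ is a dualizable \'{e}tale $\Sph_p$-algebra, the object $R$ is dualizable in $\Sp_{K(n)}$. Moreover, because $E_n$ is already $K(n)$-local and $\Sph W(\FF_{p^m})$ is flat over $\Sph_p$, one has
\[
    E_n\otimes R \;\simeq\; E_n\otimes_{\Sph_p}\Sph W(\FF_{p^m}),
\]
whose $\pi_0$ is computed by splitting $W(\FF_{p^m})$ across its $m$ embeddings into $W(\cl{\FF}_p) \subseteq \pi_0(E_n)$, indexed by $\sigma \in \Gal(\FF_{p^m}/\FF_p) \simeq \ZZ/m$, yielding an isomorphism
\[
    \pi_0(E_n)\otimes_{\ZZ_p} W(\FF_{p^m}) \;\iso\; \Cnt(\ZZ/m,\pi_0(E_n)), \qquad c\otimes a \mapsto \bigl(c\cdot\sigma(a)\bigr)_\sigma.
\]
This exhibits $E_n\otimes R$ as a \emph{split} $\ZZ/m$-Galois extension of $E_n$ in $\Theta_n$. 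Since $E_n\otimes L_{K(n)}(-)$ is nil-conservative, \Cref{Galois_Nil} implies that $R$ is Galois over $\Sph_{K(n)}$.

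The same display also computes the even Morava module of $R$. The main bookkeeping step, which I expect to be the most delicate part, is to check that the above isomorphism intertwines the $\Morex_n$-action on the source (acting only on the first factor, since $\Morex_n$ acts trivially on the $\Sph$-algebra $\Sph W(\FF_{p^m})$) with the $\rho$-twisted action on the target. Parametrizing the embeddings by $k\in \ZZ/m$ via $\sigma_k := \mathrm{Frob}_{\cl{\FF}_p}^{k}\circ \sigma_0$, this reduces to verifying the identity
\[
    g\bigl(\sigma_{k-\rho(g)}(a)\bigr) \;=\; \sigma_k(a) \qquad \forall\, g\in\Morex_n,\; k\in\ZZ/m,\; a\in W(\FF_{p^m}).
\]
Since $g$ acts on $W(\cl{\FF}_p)\subseteq \pi_0(E_n)$ through $\pi(g)\in \Gal(\cl{\FF}_p/\FF_p) \simeq \widehat{\ZZ}$ as $\mathrm{Frob}_{\cl{\FF}_p}^{\pi(g)}$, and $\pi(g)-\rho(g)\in m\widehat{\ZZ}$ acts as the identity on $\sigma_0(W(\FF_{p^m}))$ (because $\mathrm{Frob}^m$ fixes $W(\FF_{p^m})$ pointwise), the identity holds. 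Invoking \Cref{inv_Gal_K(n)}, the classifying homomorphism of $R$ agrees with $\rho$ up to conjugation; since $\ZZ/m$ is abelian, this pins it down as the composition $\Morex_n\xrightarrow{\pi}\widehat{\ZZ}\twoheadrightarrow\ZZ/m$ exactly.
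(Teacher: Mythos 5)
Your proof is correct and runs along the same lines as the paper's: reduce to \Cref{inv_Gal_K(n)}, verify the Galois property, and identify the Morava module. The one genuine divergence is the Galois step: you establish it via nil-conservativity of $E_n\otimes(-)\colon\Sp_{K(n)}\to\Theta_n$ (paralleling the argument of \Cref{Cyclo_Galois}, using \Cref{Galois_Nil} together with dualizability and the split base change to $E_n$), whereas the paper cites Westerland's Theorem~3.24 (that the Morava-module computation by itself forces the Galois property) or, alternatively, the general fact from Mathew that a symmetric monoidal colimit-preserving functor sends Galois extensions to Galois extensions. Your route is a bit longer but more self-contained within the toolkit the paper has already set up; the paper's citations are shorter but offload the work. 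The Morava-module computation you spell out is exactly the observation the paper compresses into one sentence (that the $\Morex_n$-action on $W(\cl{\FF}_p)\subseteq\pi_0 E_n$ factors through $\pi$ via the lift of Frobenius), and your equivariance check is correct; note also that any sign-convention ambiguity in writing down the $\rho$-twisted action is immaterial here because $\ZZ/m$ is abelian, so the classifying homomorphism is determined on the nose rather than only up to conjugacy, as you point out.
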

\begin{proof}
    By \Cref{inv_Gal_K(n)}, it suffices to show that $L_{K(n)}\Sph W(\FF_{p^m})$ is Galois and the even Morava module 
    \[
        \pi_0(E_n \otimes L_{K(n)}\Sph W(\FF_{p^m})) \simeq
        \pi_0(E_n) \otimes W(\FF_{p^m})
    \]
    is equivariantly isomorphic to $C(\ZZ/m,\pi_0(E_n))$ with the $\pi$-twisted $\Morex_n$-action. The second claim follows from the fact that the action of $\Morex_n$ on the coefficient ring 
    $W(\cl{\FF}_p) \sseq \pi_0(E_n)$
    factors through $\pi$ and is given again by the lift of Frobenius (see \cite[\S3.2.2]{barthel2019chromatic}). We now observe that the first claim follows from the second. Indeed, by \cite[Theorem 3.24]{Westerland}, if a $K(n)$-local commutative ring spectrum $R$ has a Morava module isomorphic to $C(G,\pi_0(E_n))$ for some $\rho\colon \GG_n \onto G$, then $R$ is isomorphic to the Galois extension $E_n^{h\ker(\rho)}$ and hence in particular Galois.

\end{proof}

\begin{rem}\label{Pi_Galois}
    In the language of \cite[Definition 6.8]{AkhilGalois}, the map $\pi\colon \Morex_n \onto \widehat{\ZZ}$ is the map induced on (weak) Galois groups by the functor 
    $L_{K(n)}\colon \Mod_{\Sph_p} \to \Sp_{K(n)}.$ 
\end{rem}

The relation to cyclotomic extensions of order prime to $p$ (i.e. of height zero) is as follows:
\begin{cor}[Rognes]\label{Unramified_K(n)_Cyclo}
    For every $m\in\NN$, the composition
    \[
        \Morex_n \xonto{\,\,\pi\,\,} 
        \widehat{\ZZ} \xonto{\quad} \ZZ/m
        \xhookrightarrow{p^{(-)}}\ZZ/(p^m-1)^\times
    \]
    classifies the (non-connected) cyclotomic Galois extension $\Sph_{K(n)}[\omega_{p^m-1}]$.
\end{cor}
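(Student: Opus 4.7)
The plan is to reduce to \Cref{K(n)_Local_Witt} via the classical étale decomposition of $\Sph_{K(n)}[\omega_{p^m-1}]$ as a product of $K(n)$-localized spherical Witt vectors. Since $p^m-1$ is coprime to $p$, and hence invertible in $\Sph_{K(n)}$, the construction of \Cref{Def_Cyclo_Ext} commutes with the symmetric monoidal colimit preserving localization $L_{K(n)}\colon\Mod_{\Sph_p}\to\Sp_{K(n)}$, giving $\Sph_{K(n)}[\omega_{p^m-1}]\simeq L_{K(n)}\Sph_p[\omega_{p^m-1}]$. By \Cref{Roots_Universal}, $\Sph_p[\omega_{p^m-1}]$ is the unique étale lift over $\Sph_p$ of the classical cyclotomic ring $\ZZ_p[\omega_{p^m-1}]$, and the natural $G:=(\ZZ/(p^m-1))^\times$-action on $\Sph_p[\omega_{p^m-1}]$ is determined by the action on $\pi_0$.

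Next, I would invoke classical $p$-adic number theory. Over $\ZZ_p$, the polynomial $\Phi_{p^m-1}(t)$ factors as a product of distinct monic irreducibles of degree $m$, corresponding to Frobenius orbits on the primitive $(p^m-1)$-th roots of unity in $\cl{\FF}_p$. Letting $H:=\langle p\rangle\le G$, which has order exactly $m$ and is the image of $\ZZ/m$ under $p^{(-)}$, one obtains a $G$-equivariant ring isomorphism
\[
\ZZ_p[\omega_{p^m-1}]\;\simeq\;\prod_{G/H}W(\FF_{p^m}),
\]
where $G$ permutes the factors through its action on $G/H$, while the stabilizer $H\simeq\ZZ/m$ of each factor acts on its copy of $W(\FF_{p^m})$ as the Galois group, via the lift of Frobenius. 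By the unique lifting property of étale algebras (\cite[Theorem 7.5.0.6]{HA}), this upgrades to a $G$-equivariant decomposition of $\Sph_p[\omega_{p^m-1}]$ as a product of copies of $\Sph W(\FF_{p^m})$ indexed by $G/H$.

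Finally, I would apply $L_{K(n)}$ and combine with \Cref{K(n)_Local_Witt}. That proposition identifies each factor $L_{K(n)}\Sph W(\FF_{p^m})$, as a $\ZZ/m$-Galois extension, with the one classified by $\Morex_n\xonto{\pi}\widehat{\ZZ}\onto\ZZ/m$, and gives an isomorphism of even Morava modules $\pi_0(E_n\otimes L_{K(n)}\Sph W(\FF_{p^m}))\simeq\Cnt(\ZZ/m,\pi_0E_n)$ with the $\pi$-twisted action. Assembling the product over $G/H$ and combining the permutation $G$-action with the fibrewise $H$-action yields an isomorphism of even Morava modules
\[
\pi_0(E_n\otimes\Sph_{K(n)}[\omega_{p^m-1}])\;\simeq\;\Cnt(G,\pi_0E_n)
\]
equivariant for the twisted action along the composition $\rho\colon\Morex_n\xonto{\pi}\widehat{\ZZ}\onto\ZZ/m\xhookrightarrow{p^{(-)}}G$. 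By \Cref{inv_Gal_K(n)}, this pins down $\rho$ (up to conjugation, which is automatic since $G$ is abelian) as the classifying homomorphism, as desired.

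The main obstacle will be the bookkeeping of equivariance: one must verify that the geometric $G$-action on $\ZZ_p[\omega_{p^m-1}]$ (by $\omega\mapsto\omega^k$) matches the right-regular action appearing in the Galois correspondence \Cref{Galois_corr_k(n)}. This reduces to the elementary observation that the Teichmüller lifts identify the primitive $(p^m-1)$-th roots of unity in $W(\cl{\FF}_p)$ with $G$ as a $G$-torsor, on which Frobenius acts as multiplication by $p\in G$; the abelianness of $G$ then ensures left and right coset spaces coincide.
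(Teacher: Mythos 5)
Your proposal is correct and follows essentially the same route as the paper's proof: reduce to the $p$-complete sphere, invoke the classical fact that $\ZZ_p[\omega_{p^m-1}]$ is the induction of $W(\FF_{p^m})$ along $p^{(-)}\colon\ZZ/m\hookrightarrow(\ZZ/(p^m-1))^\times$, lift this étale decomposition to $\Sph_p$, and apply \Cref{K(n)_Local_Witt}. The only stylistic difference is that you finish by explicitly reassembling the Morava module and invoking \Cref{inv_Gal_K(n)} to pin down the classifying homomorphism, whereas the paper's proof reduces more directly to the statement that the Galois extension of $\ZZ_p$ classified by $f\colon\widehat{\ZZ}\onto\ZZ/m\hookrightarrow(\ZZ/(p^m-1))^\times$ is $\ZZ_p[\omega_{p^m-1}]$, relying on the algebraicity of Galois extensions of $\Sph_p$ (\cite[Theorem 6.17]{AkhilGalois}) to lift that identification in one step.
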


\begin{proof}
    By \Cref{K(n)_Local_Witt}, it suffices to show that the composition
    \[
        f\colon \widehat{\ZZ} \onto \ZZ/m \xhookrightarrow{p^{(-)}}\ZZ/(p^m-1)^\times
    \]
    classifies $\Sph_p[\omega_{p^m-1}]$. Since all Galois extensions of $\Sph_p$ are algebraic (\cite[Theorem 6.17]{AkhilGalois}), it suffices to show that the Galois extension of $\ZZ_p=\pi_0\Sph_p$ classified by $f$ is $\ZZ_p[\omega_{p^m-1}]$.
    The splitting of the cyclotomic polynomial $\Phi_{p^m-1}(t)$ into irreducible factors over $\ZZ_p$ induces an isomorphism of the ring
    \[
        \ZZ_p[\omega_{p^m-1}] \simeq
        \ZZ_p[t]/\Phi_{p^m-1}(t)
    \] 
    with a product of $\frac{\phi(p^m-1)}{m}$ copies of $W(\FF_{p^m})$. Moreover, as a $\ZZ/(p^m-1)^\times$-equivariant ring, $\ZZ_p[\omega_{p^m-1}]$ is isomorphic to the induction of $W(\FF_{p^m})$ along the group homomorphism
    \[
        p^{(-)}\colon \ZZ/m \into \ZZ/(p^m-1)^\times
    \]
    and hence the claim follows.
\end{proof}

\begin{rem}
    For every $N\in\NN$ with $(N,p)=1$, we have $N\mid (p^m-1)$ for some $m\in\NN$. Thus, $\pi\colon \Morex_n \onto \widehat{\ZZ}$ accounts for all \textit{prime to $p$} cyclotomic extensions of $\Sph_{K(n)}$.  
\end{rem}

Taken together, $\pi$ and $\chi$ assemble into a single map 
\[
    \chi_\tot\colon
    \Morex_n \onto \widehat{\ZZ} \times \ZZ_p^\times,
\]
which we call the \textbf{(total) cyclotomic character}. We recall the following standard fact:
\begin{prop}\label{Morava_Abelianization}
    The map $\chi_\tot$ exhibits 
    $\widehat{\ZZ} \times \ZZ_p^\times$ 
    as the (profinite) abelianization of $\Morex_n$.
\end{prop}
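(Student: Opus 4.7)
The plan is to compute the abelianization via the semidirect product decomposition
\[
    \Morex_n \simeq \widehat{\ZZ} \ltimes \mathcal{O}_n^\times
\]
given just before the statement, reducing matters to a classical theorem about units of maximal orders in local division algebras.

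First, I would invoke the general (elementary) fact that for any semidirect product of topological groups $N \rtimes H$, there is a canonical isomorphism
\[
    (N \rtimes H)^{\ab} \simeq (N^{\ab})_H \oplus H^{\ab},
\]
where $(N^{\ab})_H$ denotes the $H$-coinvariants with respect to the induced $H$-action on $N^{\ab}$. This is proved by identifying $[N \rtimes H,\, N \rtimes H] \cap N$ with the subgroup of $N$ generated by $[N,N]$ and $\{hnh^{-1}n^{-1} : h \in H,\, n \in N\}$. Since $H = \widehat{\ZZ}$ is already abelian, $H^{\ab} = \widehat{\ZZ}$, and the projection $\Morex_n \onto \widehat{\ZZ}$ realizing this factor is exactly $\pi$.

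Second, and this is the main non-trivial input, I would cite the classical theorem (due to Riehm, and a consequence of the vanishing of $SK_1$ for maximal orders in local central division algebras, following Nakayama–Matsushima and Platonov) that the reduced norm induces an isomorphism
\[
    (\mathcal{O}_n^\times)^{\ab} \iso \ZZ_p^\times.
\]
By \Cref{Cyc_Char_K(n)}, this isomorphism coincides with the restriction of the $p$-adic cyclotomic character $\chi$ to $\mathcal{O}_n^\times$.

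Finally, I would check that the Frobenius action of $\widehat{\ZZ}$ on $(\mathcal{O}_n^\times)^{\ab} \simeq \ZZ_p^\times$ is trivial. This follows from the Galois-equivariance of the reduced norm together with the fact that $\widehat{\ZZ}$ acts trivially on $\ZZ_p$: for any $\sigma \in \widehat{\ZZ}$ and $x \in \mathcal{O}_n^\times$, one has $\det(\sigma \cdot x) = \sigma(\det(x)) = \det(x)$. Combining the three steps,
\[
    (\Morex_n)^{\ab} \simeq \ZZ_p^\times \oplus \widehat{\ZZ} \simeq \widehat{\ZZ} \times \ZZ_p^\times,
\]
and by construction the quotient map is $\pi$ on the $\widehat{\ZZ}$-factor and $\chi$ on the $\mathcal{O}_n^\times$-factor, i.e. precisely $\chi_\tot$. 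The main obstacle is the invocation of Riehm's theorem, which is nontrivial but standard; everything else is formal manipulation of the semidirect product.
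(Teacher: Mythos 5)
Your proof is correct and follows essentially the same route as the paper: decompose $\Morex_n \simeq \widehat{\ZZ}\ltimes\mathcal{O}_n^\times$, use that $\det$ exhibits $\ZZ_p^\times$ as $(\mathcal{O}_n^\times)^{\ab}$, observe the $\widehat{\ZZ}$-action is trivial, and apply the abelianization formula for semidirect products. The paper states these three inputs without elaboration, while you supply the references (Riehm/Nakayama--Matsushima for $(\mathcal{O}_n^\times)^{\ab}\simeq\ZZ_p^\times$) and the Galois-equivariance argument for triviality of the induced action, which is a welcome amount of detail but not a different argument.
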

\begin{proof}
    Let $\mathcal{D}_n$ be a division algebra over $\QQ_p$ of invariant $\frac{1}{n}$, so that $\mathcal{O}_n$ is the maximal order in $\mathcal{D}_n$. We may present $\mathcal{O}_n$ as 
    \[
            \mathcal{O}_n \simeq 
            W(\FF_{p^n})\{S\}/(S^n=p,\,Sx=\varphi(x)S\quad\forall x\in W(\FF_{p^n})).
    \]
    Then, $S$ is a  uniformizer of $\mathcal{D}_n$ and hence
    there is a split short exact sequence 
    \[
    1 \to \mathcal{O}_n^\times \hookrightarrow \mathcal{D}_n^\times \to \ZZ \to 1
    \]
    in which the second map is the $S$-adic valuation map. Since conjugation by $S$ acts by $\varphi$ on $W(\FF_{p^n})$, after profinite completion, the above short exact sequence identifies with 
    \[
    1\to \mathcal{O}_n^\times \hookrightarrow\Morex_n \xrightarrow{\pi} \widehat{\ZZ} \to 1.
    \]
    By \cite{nakayama1943129} 
    the map 
    \[
        \mathcal{D}_n^{\times} \xrightarrow{\mathrm{det}} \QQ_p^{\times}
    \]
    exhibits $\QQ_p^{\times}$ as the abelization of $\mathcal{D}_n^\times$. Taking profinite completions, we obtain that 
    \[
        \Morex_n^\ab \simeq (\widehat{\mathcal{D}_n^\times})^\ab \xrightarrow[{}^\sim]{\det} \widehat{\QQ_p^\times} \simeq \ZZ_p^\times \times \widehat{\ZZ}. 
    \]
    as claimed.

\end{proof}
 
Consequently, every abelian Galois extension of $\Sph_{K(n)}$ is a sub-extension of a cyclotomic extension, obtained by adding an ordinary root of unity of some order prime to $p$ and a higher root of unity of some $p$-power order.

\begin{rem}
    For $\QQ_p \in \calg(\Sp_{\QQ})$, considered as the extrapolation to height $n=0$  of the sequence $\Sph_{K(n)}\in \calg(\Sp_{K(n)})$, we have a completely analogous picture. By the ($p$-local) Kronecker-Weber theorem, every abelian extension of $\QQ_p$ is contained in a cyclotomic extension. Moreover, we have 
    $
        \Gal(\QQ_p)^\ab \simeq 
        \widehat{\ZZ} \times \ZZ_p^\times,
    $
    where the $\widehat{\ZZ}$ component corresponds to the maximal \emph{unramified} cyclotomic extension $\QQ_p^{\mathrm{un}} = \bigcup_m \QQ_p(\omega_{p^m-1}),$
    and the $\ZZ_p^\times$ component corresponds to the maximal \emph{ramified} cyclotomic extension $\QQ_p(\omega_{p^{\infty}}).$
\end{rem}

\subsection{Picard Groups}

In this subsection we relate the higher cyclotomic extensions of $\Sph_{K(n)}$ to the Picard group of $\Sp_{K(n)}$.

\begin{defn}
    Let $\Pic_n:=\Pic(\Sp_{K(n)}),$ and let $\Pic_n^0 \le \Pic_n$ be the (index 2) subgroup of objects $X\in\Pic_n,$ such that $E_n \otimes X \simeq E_n$ as $E_n$-modules.
\end{defn}

We also denote by $\Pic_n^{\mathrm{alg},0}$ the Picard group of the category of even Morava modules. The functor $\pi_0(E_n \otimes -)$ induces a map $\Pic_n^0 \to \Pic_n^{\mathrm{alg},0}$ (whose kernel is known as the \emph{exotic} Picard group). Furthermore, there is a canonical isomorphism \cite[Proposition     2.5]{goerss2015Picard}
\[
    \Pic_n^{\mathrm{alg},0} \simeq 
    H_c^1(\Morex_n; (\pi_0E_n)^\times).
\]

\begin{rem}\label{Pic_Cocycle}
    Since it will play a role in the sequel, we recall briefly how this identification goes. Given $M\in\Picalg_n$, we have $M\simeq\pi_0E_n$ as $\pi_0E_n$-modules. By choosing a generator $x\in M$, we associate with $M$ the function 
    \(
        \alpha_M\colon\Morex_n \to \pi_0E_n^\times
    \)
    given by $\alpha_M(\sigma) := \sigma(x)/x$. This function is a 1-cocycle, whose cohomology class $[\alpha_M] \in H_c^1(\Morex_n; (\pi_0E_n)^\times)$ is independent of the generator $x\in M$.
\end{rem}

\subsubsection{Odd prime}
We begin by considering the case where the prime $p$ is odd. First, 

\begin{lem}\label{Detect_Pic0}
    If $p$ is odd, then
    $\Pic_n^0 = \Pic^\re(\Sp_{K(n)})$.
\end{lem}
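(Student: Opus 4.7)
The plan is to exhibit both subgroups as the kernel of the same homomorphism $\Pic_n \to \{\pm 1\}$. For $\Pic^\re(\Sp_{K(n)})$, this is immediate: since $\pi_0\Sph_{K(n)}$ is a $p$-complete connected local ring in which $2$ is invertible (as $p$ is odd), \Cref{Dim_Pic_pm} tells us that the dimension restricts to a homomorphism $\dim\colon \Pic_n \to \{\pm 1\}$, whose kernel is by definition $\Pic^\re(\Sp_{K(n)})$.

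For $\Pic_n^0$, I would use the symmetric monoidal functor $E_n \otimes (-)\colon \Sp_{K(n)} \to \Theta_n$ to obtain a group homomorphism $\Pic_n \to \Pic(\Theta_n)$. The key algebraic input is that $\Pic(\Theta_n) \simeq \ZZ/2$, generated by $\Sigma E_n$; this follows from the fact that $\pi_*E_n$ is $2$-periodic and $\pi_0E_n$ is a complete regular local ring (and so has trivial algebraic Picard group), so every invertible $K(n)$-local $E_n$-module is, up to a single suspension, free of rank $1$. Consequently, $E_n \otimes X$ is either $E_n$ or $\Sigma E_n$, so $\Pic_n^0$ is exactly the kernel of the composite homomorphism $\Pic_n \to \Pic(\Theta_n) \simeq \ZZ/2$.

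It remains to identify the two resulting homomorphisms $\Pic_n \to \{\pm 1\}$. Since $E_n \otimes (-)$ is symmetric monoidal, it preserves dimensions in the sense that $\dim(E_n \otimes X)$ is the image of $\dim(X)$ under the unit map $\pi_0\Sph_{K(n)} \to \pi_0E_n$. Computing in $\Theta_n$, the suspension of the unit has dimension $-1$ in any stable symmetric monoidal $\infty$-category, so $\dim(\Sigma E_n) = -1$, and under the identification $\Pic(\Theta_n) \simeq \ZZ/2$ the dimension on $\Pic(\Theta_n)$ becomes the inclusion $\{\pm 1\} \hookrightarrow \pi_0E_n^\times$. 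Since $2$ is invertible, the unit map is injective on $\{\pm 1\}$, so the two homomorphisms $\Pic_n \to \{\pm 1\}$ agree and hence have equal kernels. The only nontrivial ingredient to invoke carefully is the standard computation of $\Pic(\Theta_n)$; the rest is a formal diagram chase with the dimension homomorphism.
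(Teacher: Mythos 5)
Your proof is correct and follows essentially the same route as the paper: both identify $\Pic^\re(\Sp_{K(n)})$ as the kernel of $\dim$ via \Cref{Dim_Pic_pm}, reduce to $\Pic(\Theta_n)\simeq\ZZ/2$ (the paper citing Baker--Richter where you sketch the standard argument from $2$-periodicity and the regularity of $\pi_0E_n$), and then compare dimensions through the symmetric monoidal functor $E_n\otimes(-)$ using that $1\neq -1$ in $\pi_0E_n$. The only difference from the paper is cosmetic (you re-derive the Baker--Richter computation rather than cite it), not a different strategy.
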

\begin{proof}
    Since $(\pi_0\Sph_{K(n)})^\mathrm{red} \simeq \ZZ_p$ (e.g. \cite[Proposition 2.2.6]{AmbiHeight}), the commutative ring $\pi_0\Sph_{K(n)}$ is connected with $2$ invertible. Hence, by \Cref{Dim_Pic_pm}, 
    every $X\in\Pic_n$ satisfies $\dim(X)=\pm1$. Applying the symmetric monoidal functor 
    \[ 
        E_n\otimes(-)\colon 
        \Sp_{K(n)} \to
        \Mod_{E_n}(\Sp_{K(n)}),
    \]
    we can test whether $\dim(X)$ is $1$ or $-1$, by looking at $\dim(E_n\otimes X)$. 
    Finally, by \cite[Theorem 8.7]{BakerRichter_Picard}, we have $\Pic(E_n)\simeq \ZZ/2$, with representatives given by $E_n$ and $\Sigma E_n$, which have dimensions $1$ and $-1$ respectively.
\end{proof}

We can now apply the Kummer theory developed in \S\ref{kummer}, to relate the $p$-th cyclotomic extension to the $(p-1)$-torsion in the Picard group of $\Sp_{K(n)}$. Namely, since the $p$-th cyclotomic extension is Galois it provides us with a distinguished Picard object. 

\begin{defn}\label{Def_Z_n}
    For $p$ odd, let $Z_n\in\Pic_n^0[p-1]$ be the Picard object corresponding to the $\ZZ/(p-1)$-Galois extension $\cyc[\Sph_{K(n)}]{p}{n}$ in $\Sp_{K(n)}$, under the map of  \Cref{Kummer_Cyclic}.
\end{defn}
That is, $Z_n$ is a $(p-1)$-torsion Picard object of dimension 1 in $\Sp_{K(n)}$, such that 
\[
    \cyc[\Sph_{K(n)}]{p}{n} \simeq 
    \bigoplus_{k=0}^{p-2} Z_n^{\otimes k}
    \qin \Sp_{K(n)}.
\]

The Picard object $Z_n$ can be characterized in an intrinsic way to $\Pic_n$ as follows:
\begin{prop}\label{Pic_Kn}
    For $p$ odd, the group $\Pic^0_n[p-1]$ is isomorphic to $\ZZ/(p-1)$ and is generated by $Z_n$.
\end{prop}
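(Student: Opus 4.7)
The plan is to combine the Kummer exact sequence of \Cref{Kummer_Cyclic} with the Galois correspondence \Cref{Galois_corr_k(n)} and the abelianization computation \Cref{Morava_Abelianization}, reducing the question to a cardinality count. Since $\Sph_{K(n)}$ is $p$-complete, \Cref{p_Complete_Roots} provides a primitive $(p-1)$-th root of unity; combined with \Cref{Detect_Pic0} this lets me apply \Cref{Kummer_Cyclic} with $m=p-1$ to obtain
\[
    0 \to (\pi_0\Sph_{K(n)})^\times/((\pi_0\Sph_{K(n)})^\times)^{p-1} \to \pi_0\GalExt{\Sp_{K(n)}}{\ZZ/(p-1)} \to \Pic_n^0[p-1] \to 0.
\]
By \Cref{Galois_corr_k(n)} the middle term is the set of continuous homomorphisms $\Morex_n\to\ZZ/(p-1)$ (conjugation is trivial since $\ZZ/(p-1)$ is abelian), which by \Cref{Morava_Abelianization} equals continuous homomorphisms $\widehat{\ZZ}\times\ZZ_p^\times\to\ZZ/(p-1)$. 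Since $\ZZ_p^\times\simeq\mu_{p-1}\times(1+p\ZZ_p)$ for odd $p$ and the pro-$p$ factor maps trivially, this group is $(\ZZ/(p-1))^2$. For the first term, $\pi_0\Sph_{K(n)}$ has $\ZZ_p$ as its reduction and $p-1$ is invertible in $\pi_0\Sph_{K(n)}$, so a Hensel-style argument shows the kernel of $(\pi_0\Sph_{K(n)})^\times\to\ZZ_p^\times$ is $(p-1)$-divisible; hence the first term reduces to $\ZZ_p^\times/(\ZZ_p^\times)^{p-1}\simeq\ZZ/(p-1)$, giving $|\Pic_n^0[p-1]|=p-1$.

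To show $Z_n$ generates, I will identify the image of the Kummer kernel inside the middle term with the subgroup of characters factoring through $\pi\colon\Morex_n\to\widehat{\ZZ}$. A unit $u\in(\pi_0\Sph_{K(n)})^\times$ reduces (modulo the nilradical) to an element of $\ZZ_p^\times\subseteq\pi_0\Sph_p$, and the corresponding Kummer Galois extension is the $K(n)$-localization of an algebraic $\ZZ/(p-1)$-Galois extension of $\Sph_p$, which by \Cref{K(n)_Local_Witt} is classified by a character factoring through $\pi$; both subgroups have order $p-1$, so they coincide. On the other hand, $Z_n$ corresponds under Kummer to $\cyc[\Sph_{K(n)}]{p}{n}$, classified by $\chi_1\colon\Morex_n\to(\ZZ/p)^\times\simeq\ZZ/(p-1)$. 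By \Cref{Cyc_Char_K(n)}, the restriction $\chi_1|_{\mathcal{O}_n^\times}$ equals $\det\bmod p$, which is surjective. So $\chi_1$ has a generating component in the summand transverse to $\pi$, and hence its image $Z_n$ generates $\Pic_n^0[p-1]\simeq\ZZ/(p-1)$.

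The main obstacle is justifying the identification of the Kummer kernel with the $\pi$-pullback subgroup of characters, which requires unwinding the Kummer construction to recognize that unit-twisted $\ZZ/(p-1)$-Galois extensions are $K(n)$-localizations of algebraic extensions of $\Sph_p$, in compatibility with \Cref{Unramified_K(n)_Cyclo}. The Hensel-style divisibility of the kernel of $(\pi_0\Sph_{K(n)})^\times\to\ZZ_p^\times$ should be routine given the $I$-adic structure of $\pi_0\Sph_{K(n)}$, but also deserves a careful check.
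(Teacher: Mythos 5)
Your proposal is correct and uses the same essential ingredients as the paper: the Kummer sequence of \Cref{Kummer_Cyclic}, the Galois correspondence \Cref{Galois_corr_k(n)}, the abelianization \Cref{Morava_Abelianization}, and the fact that the unit group of $\pi_0\Sph_{K(n)}$ agrees with $\ZZ_p^\times$ modulo $(p-1)$-st powers. The organizational difference is that the paper invokes the \emph{naturality} of the Kummer sequence with respect to $L_{K(n)}\colon\Mod_{\Sph_p}(\Sp)\to\Sp_{K(n)}$, which packages both the cardinality count and the identification of the Kummer kernel into one commutative diagram of short exact sequences (with top row the Kummer sequence for $\Sph_p$, whose right-hand term vanishes). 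Once the left vertical map is shown to be an isomorphism and the middle vertical map is identified with the $\pi$-pullback via \Cref{Pi_Galois}, the diagram degenerates and the generation statement for $Z_n$ falls out by chasing the identifications, with no need to appeal to \Cref{Cyc_Char_K(n)}. You instead prove the cardinality count and the generation claim separately: first via an order count, then by identifying the Kummer kernel with the $\pi$-pullback characters (using the same unit-reduction observation and \Cref{K(n)_Local_Witt}), and finally invoking \Cref{Cyc_Char_K(n)} to verify that $\chi_1|_{\mathcal O_n^\times}=\det\bmod p$ is surjective, so that $\chi_1$ has order $p-1$ modulo the $\pi$-pullback subgroup. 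Both routes are valid; the paper's diagram approach is marginally tighter because it never needs to pass through \Cref{Cyc_Char_K(n)}, but yours has the advantage of isolating exactly what makes $\chi_1$ a generator, namely its nontrivial restriction to $\mathcal{O}_n^\times$. The only place you should be careful to flesh out is the claim that the Kummer extension attached to a unit $u\in\pi_0\Sph_{K(n)}^\times$ is the $K(n)$-localization of an algebraic extension of $\Sph_p$: this is exactly where the naturality of \Cref{Kummer_Cyclic} under $L_{K(n)}$ is doing the work, and it is cleaner to invoke that naturality directly (as the paper does) than to argue it by hand from the reduction of $u$.
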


\begin{proof} 
    Using \Cref{Detect_Pic0} and \Cref{Kummer_Cyclic} together with its naturality with respect to the symmetric monoidal the functor 
    \[
        L_{K(n)}\colon
        \Mod_{\Sph_p}(\Sp) \to
        \Sp_{K(n)},
    \]
    we obtain the following commutative diagram of abelian groups:
    \[
        \xymatrix@C=1.8em{
            0\ar[r] & (\pi_0\Sph_p^{\times})/(\pi_0\Sph_p^{\times})^{p-1}\ar[d]^f\ar[r] & \pi_0\GalExt{\Sph_p}{\ZZ/(p-1)}\ar[d]^g\ar[r] & \Pic^{\ev}(\Sph_p)[p-1]\ar[d]\ar[r] & 0\\
            0\ar[r] & (\pi_{0}\Sph_{K(n)}^{\times})/(\pi_{0}\bb S_{K(n)}^{\times})^{p-1}\ar[r] & \pi_0\GalExt{\Sp_{K(n)}}{\ZZ/(p-1)}\ar[r] & \Pic^0_n[p-1]\ar[r] & 0.
        }
    \]
    First, it is well known that $\Pic(\Sph_p)\simeq \ZZ$ (e.g., see \cite[Proposition 4.13]{carmeli2022strict}), so the upper right corner vanishes.  
    In the top left corner, we have 
    \[
         (\pi_{0}\Sph_{p}^{\times})/(\pi_{0}\Sph_{p}^{\times})^{p-1} \simeq
         (\ZZ_p^\times) / (\ZZ_p^\times)^{p-1} \simeq
         \ZZ/(p-1).
    \]
    Furthermore, the left vertical map $f$ is an isomorphism. Indeed, the map 
    \[
        \ZZ_p \simeq 
        \pi_0\widehat{\Sph}_p\to 
        \pi_0\Sph_{K(n)}
    \] 
    admits a retract 
    \(
        r\colon \pi_0\Sph_{K(n)} \to 
        \ZZ_p,
    \)
    whose kernel consists of nilpotent elements (e.g. see \cite[Proposition 2.2.6]{AmbiHeight}). 
    In particular, every element in the kernel of 
    \(
        r^\times \colon 
        \pi_0\Sph_{K(n)}^\times \to 
        \ZZ_p^\times,
    \)
    is of the form $x=(1+\varepsilon)$ for some nilpotent $\varepsilon \in \pi_0\Sph_{K(n)}$. Since $p-1$ is invertible in $\pi_0\Sph_{K(n)}$ and the power series expansion of $(1+t)^{\frac{1}{p-1}}$ belongs to $\ZZ[\inv{(p-1)}][[t]]$, every such element $x$ has a $(p-1)$-st root. Hence, $r^\times$ induces an isomorphism after modding out the $(p-1)$-st powers. Since this induced isomorphism is a left-inverse of $f$, it follows that $f$ is an isomorphism as well.
    
    Next, by \Cref{Morava_Abelianization}, the map 
    \[
        (\pi,\chi) \colon 
        \Morex_n \to \widehat{\ZZ} \times \ZZ_p^\times
    \]
    exhibits the target as the abelianization of the source. Hence, $g$ can be identified with the inclusion (see \Cref{Pi_Galois}):
    \[
        \hom(\widehat{\ZZ},\ZZ/(p-1)) \into 
        \hom(\widehat{\ZZ},\ZZ/(p-1)) \oplus 
        \hom(\ZZ_p^\times,\ZZ/(p-1)).
    \]
    Since 
    \(
        \hom(\widehat{\ZZ},\ZZ/(p-1)) \simeq
        \ZZ/(p-1),
    \)
    the entire diagram can be identified with
    \[
        \xymatrix@C=1.8em{
            0\ar[r] & \ZZ/(p-1)\ar[d]^=\ar[r]^= & \ZZ/(p-1)\ar@{^(->}[d]\ar[r] & 0\ar[d]\ar[r] & 0\\
            0\ar[r] & \ZZ/(p-1)\ar@{^(->}[r] & \ZZ/(p-1)\oplus \hom(\ZZ_p^\times,\ZZ/(p-1))\ar[r] & \Pic^0_n[p-1]\ar[r] & 0,
        }    
    \]
    where both inclusions of $\ZZ/(p-1)$ are as the first summand of the target. Thus, the bottom right map restricts to an isomorphism
    \[
        \ZZ/(p-1) \simeq
        \hom(\ZZ_p^\times, \ZZ/(p-1)) \iso
        \Pic^0_n[p-1].
    \]
    Chasing through the identifications, the generator $1 \in \ZZ/(p-1)$ corresponds to the $\ZZ/(p-1)$-Galois extension $\cyc[\Sph_{K(n)}]{p}{n}$, and thus its image, $Z_n$, generates $\Pic^0_n[p-1]$.
\end{proof}

\begin{rem}
    By \cite[\S3.3]{Westerland}, the image of $Z_n$ in $\Pic_n^{\mathrm{alg},0}$ is classified by the composition
    \[
        \Morex_n \oto{\chi} 
        \ZZ_p^\times \onto 
        \FF_p^\times \oto{\tau} 
        \ZZ_p^\times \sseq
        (\pi_0E_n)^\times,
    \]
    where $\tau$ is the Teichm{\"u}ller lift. 
\end{rem}

\subsubsection{Even prime}
In the case $p=2$, we can not rely on Kummer Theory to produce Picard objects in $\Sp_{K(n)}$. However, we can use instead the variant afforded by \Cref{R_minus_one}. Recall that given $R\in \GalExt{\Sp_{K(n)}}{\mu_2}$, where $\mu_2 = \{\pm1\}$, the cofiber of the unit map $\one \to R$, denoted by  $\cl{R}$, belongs to $\Pic_n$ (\Cref{R_minus_one_Pic}). In fact, we have a somewhat stronger statement:
\begin{lem}\label{Rbar_Pic0}
    For every $R\in\GalExt{\Sp_{K(n)}}{\mu_2}$, we have $\cl{R}\in\Pic_n^0$.
\end{lem}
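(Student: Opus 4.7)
The plan is to compute $E_n \otimes \cl{R}$ as an $E_n$-module and show it is free of rank one. By definition of $\cl{R}$ as the cofiber of $\one \to R$ and the fact that $E_n \otimes (-)$ preserves cofiber sequences, we have a cofiber sequence
\[
    E_n \to E_n \otimes R \to E_n \otimes \cl{R}
\]
in $\Theta_n$. So it suffices to identify the first map with a split monomorphism of free $E_n$-modules whose cofiber is $E_n$.

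By the $K(n)$-local Galois correspondence, $R$ is classified by some continuous homomorphism $\rho\colon \Morex_n \to \mu_2$, and \Cref{inv_Gal_K(n)} gives an isomorphism of even Morava modules
\[
    \pi_0(E_n \otimes R) \simeq \Cnt(\mu_2, \pi_0 E_n).
\]
Forgetting the $\Morex_n$-action, this is just $\pi_0 E_n \times \pi_0 E_n$, and since the Morava module is even, $E_n \otimes R \simeq E_n \oplus E_n$ as $E_n$-modules. Moreover, the unit map $E_n \to E_n \otimes R$ corresponds on $\pi_0$ to the inclusion of constant functions into $\Cnt(\mu_2, \pi_0 E_n)$, i.e., the diagonal map $\pi_0 E_n \to \pi_0 E_n \times \pi_0 E_n$. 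This identification can be seen, for instance, from the fact that the unit map is the $\Morex_n$-fixed point inclusion for the $\rho$-twisted action, whose restriction of scalars along $\{e\} \hookrightarrow \Morex_n$ is exactly the diagonal.

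Taking the cofiber of the diagonal $E_n \to E_n \oplus E_n$ (in $\Theta_n$, which is stable) yields $E_n$, so we obtain $E_n \otimes \cl{R} \simeq E_n$ as $E_n$-modules. This is precisely the condition defining $\Pic_n^0$, completing the proof.

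The only step that might require care is the identification of the unit map $E_n \to E_n \otimes R$ with the diagonal; but this is purely formal once one observes that both maps are the unique (up to the split-Galois isomorphism of \Cref{En_Split_Galois} after further base change, or directly from the construction in \Cref{inv_Gal_K(n)}) $E_n$-algebra maps that become $\Morex_n$-equivariant for the $\rho$-twisted action on the target. Since the argument only uses the splitting of the underlying $E_n$-module of a Galois extension with group $\mu_2$, no extra input beyond what has already been established is needed.
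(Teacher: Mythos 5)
Your argument is correct, but it takes a different route from the paper. You base-change the cofiber sequence $\one \to R \to \cl{R}$ directly along $\Sph_{K(n)} \to E_n$ and use the Morava-module identification of \Cref{inv_Gal_K(n)} (plus the split-Galois description of \Cref{En_Split_Galois}) to recognize $E_n \to E_n \otimes R$ as the diagonal $E_n \to E_n \times E_n$, from which $E_n \otimes \cl{R} \simeq E_n$ follows. The paper instead base-changes first to $R$ itself: tensoring the cofiber sequence with $R$ gives $R \to R \otimes R \to R \otimes \cl{R}$, and the defining Galois isomorphism $R \otimes R \simeq R \times R$ (for $G = \mu_2$) identifies the unit with the diagonal, so $R \otimes \cl{R} \simeq R$; then one base-changes from $R$ to $E_n$ along any $\Sph_{K(n)}$-algebra map $R \to E_n$ (which exists because $R$ is Galois). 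The paper's approach is a touch more economical --- it only invokes the Galois condition on $R$ and never mentions Morava modules --- whereas yours leans on machinery that has already been set up and is in that sense self-contained. One small point: your remark that uniqueness of the map $E_n \to E_n \times E_n$ requires $\Morex_n$-equivariance is an unnecessary hedge; for \emph{any} $E_n$-algebra $S$ there is a unique $E_n$-algebra map $E_n \to S$ (the unit), so once you know $E_n \otimes R \simeq E_n \times E_n$ as $E_n$-algebras, the unit map is automatically the diagonal.
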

\begin{proof}
    Let  $R\in\GalExt{\Sp_{K(n)}}{\mu_2}$. We need to show that $E_n\otimes \cl{R} \simeq E_n$ as an $E_n$-module. For this, we first observe that $R\otimes \cl{R}$ is isomorphic to the cofiber of the unit map $\one \to R$ tensored with $R$. Since this map can be identified with the diagonal $R\to R\times R$, whose cofiber is $R$, we get that $R \otimes \cl{R}\simeq R$ as $R$-modules. Since $R$ is a Galois extension of $\Sph_{K(n)}$, there exists a map of commutative algebras $R \to E_n$.
    Base-changing from $R$ to $E_n$ along this map, we get that $E_n\otimes \cl{R}\simeq E_n$. 
\end{proof}

Thus, we get a function
\[
    \Xi \colon 
    \hom_c(\Morex_n,\mu_2) \simeq
    \pi_0\GalExt{\Sp_{K(n)}}{\mu_2}\oto{\cl{(-)}} 
    \Pic_n^0.
\]
To analyse the image of $\Xi$, we shall consider its further image in $\Picalg_n$. For this, it will be convenient to identify $\hom_c(\Morex_n,\mu_2)$ with $H_c^1(\Morex_n;\mu_2)$ for the trivial $\Morex_n$-action on $\mu_2$.
\begin{prop}
\label{Morava_Module_Pic_2}
    The composition
    \[
        H_c^1(\Morex_n;\mu_2) \simeq
        \hom_c(\Morex_n,\mu_2) \oto{\Xi}
        \Pic_n^0 \to \Picalg_n \simeq
        H_c^1(\Morex_n;\pi_0E_n^\times)
    \]
    is induced by the inclusion $\mu_2 \sseq \pi_0E_n^\times$. 
\end{prop}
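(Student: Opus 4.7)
The plan is to compute the even Morava module of $\cl{R}$ directly, for $R \in \GalExt{\Sp_{K(n)}}{\mu_2}$ classified by a continuous character $\rho \colon \Morex_n \to \mu_2$, and then read off the corresponding cocycle via the recipe of \Cref{Pic_Cocycle}.

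First, by \Cref{inv_Gal_K(n)}, the even Morava module of $R$ is
\[
    \pi_0(E_n \otimes R) \simeq \Cnt(\mu_2, \pi_0 E_n) \simeq \pi_0 E_n \oplus \pi_0 E_n,
\]
with the $\rho$-twisted $\Morex_n$-action. Concretely, for $\sigma \in \Morex_n$ and $(a,b) \in \pi_0 E_n \oplus \pi_0 E_n$, we have $\sigma \cdot (a,b) = (\sigma a, \sigma b)$ when $\rho(\sigma) = 1$, and $\sigma \cdot (a,b) = (\sigma b, \sigma a)$ when $\rho(\sigma) = -1$; moreover, the unit map $\one \to R$ corresponds on even Morava modules to the diagonal inclusion $x \mapsto (x,x)$.

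Next, apply $E_n \otimes (-)$ to the cofiber sequence $\one \to R \to \cl{R}$ in $\Sp_{K(n)}$. Since all three terms have even Morava modules (the third by the proof of \Cref{Rbar_Pic0}, where $\pi_0(E_n \otimes \cl{R}) \simeq \pi_0 E_n$ as $\pi_0 E_n$-modules), the long exact sequence in Morava modules degenerates to a short exact sequence
\[
    0 \to \pi_0 E_n \to \pi_0 E_n \oplus \pi_0 E_n \to \pi_0(E_n \otimes \cl{R}) \to 0.
\]
Identifying the cokernel with $\pi_0 E_n$ via $(a,b) \mapsto a-b$, a direct case check using the description above shows that the residual $\Morex_n$-action is transported to $\sigma \cdot y = \rho(\sigma) \cdot \sigma(y)$, i.e., the standard $\Morex_n$-action on $\pi_0 E_n$ twisted by the character $\rho$ viewed as taking values in $\pi_0 E_n^\times$.

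Finally, applying \Cref{Pic_Cocycle} to the Morava module $\pi_0(E_n \otimes \cl{R}) \simeq \pi_0 E_n$ with the generator $x = 1$, we obtain the cocycle
\[
    \alpha_{\cl{R}}(\sigma) = (\sigma \cdot 1)/1 = \rho(\sigma) \cdot \sigma(1) = \rho(\sigma),
\]
which is exactly the image of $\rho$ under the inclusion $\mu_2 \sseq \pi_0 E_n^\times$, proving the claim. The main thing requiring care is the degeneration of the long exact sequence in Morava modules, but this follows immediately from the evenness of all three terms; the remaining content is the bookkeeping needed to transport the $\rho$-twisted action through the explicit quotient.
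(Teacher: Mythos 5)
Your proof is correct and follows essentially the same route as the paper: you use \Cref{inv_Gal_K(n)} to get the $\rho$-twisted Morava module of $R$, pass to the cokernel of the diagonal via the difference map $(a,b)\mapsto a-b$, and read off the cocycle from \Cref{Pic_Cocycle}. The extra care you take in spelling out the case analysis for the twisted action and noting the degeneration of the long exact sequence by evenness is sound but just fills in bookkeeping the paper leaves implicit.
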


\begin{proof}
    Let $\Morex_n \oto{\rho} \mu_2$ be a homomorphism, and let $R \in \GalExt{\Sp_{K(n)}}{\mu_2}$ be the Galois extension classified by $\rho$ by the Galois correspondence (\Cref{Galois_corr_k(n)}). We have an isomorphism of $\Morex_n$-equivariant $E_n$-modules 
    $E_n \otimes R \simeq \prod_{\mu_2}E_n$,
    where $\Morex_n$ acts on the right hand side via the $\rho$-twisted action (\Cref{inv_Gal_K(n)}). Hence, we can identify $\pi_0(E_n \otimes \cl{R})$ with the cokernel of the diagonal map $\pi_0 E_n \to \prod_{\mu_2} \pi_0E_n$. This cokernel can be further identified with $\pi_0E_n$, via the difference map  
    $\prod_{\mu_2} \pi_0E_n \to \pi_0E_n$. Choosing the generator $x_0\in\pi_0(E_n\otimes \cl{R})$, that corresponds  via this identification to $1\in\pi_0E_n$, we get that the action of $\sigma\in\Morex_n$ on $x_0$ is given by 
    \(
        \sigma(x_0) = \rho(\sigma) x_0.
    \)
    This implies that the image of $\cl{R}$ in $H_c^1(\Morex_n,\pi_0E_n^\times)$ is the 1-cocycle 
    $\Morex_n\oto{\rho}\mu_2 \sseq \pi_0E_n^\times$ (see \Cref{Pic_Cocycle}). 
\end{proof}

\begin{rem}
    The above shows that the composition 
    \[
        \pi_0\GalExt{\cC}{\mu_2} \oto{\cl{(-)}}
        \Pic_n^0 \to
        \Pic_n^{0,\mathrm{alg}}
    \]
    is a group homomorphism. This is in contrast to the fact that $\cl{(-)}$ itself is not (see \Cref{ex_KO_KU}).
\end{rem}

 From \Cref{Morava_Module_Pic_2} we deduce the following:
\begin{prop}
\label{Theta_Injectivity}
    The composition
    \[
        \hom_c(\ZZ_2^\times,\mu_2) \oto{\chi^*}
        \hom_c(\Morex_n,\mu_2) \oto{\:\Xi\:}
        \Pic_n^0
    \] 
    is injective.
\end{prop}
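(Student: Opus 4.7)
The plan is to reduce to the algebraic Picard group and exploit the fixed-point computation of \Cref{Morex_Fixed_Points}. Suppose $\rho\colon \ZZ_2^\times \to \mu_2$ lies in the kernel of the composition, so that the $\mu_2$-Galois extension $R$ of $\Sph_{K(2)}$ classified by $\rho\circ\chi$ has $\cl{R}\simeq\one$ in $\Pic_n^0$. By postcomposing with the map $\Pic_n^0 \to \Picalg_n \simeq H_c^1(\Morex_n;\pi_0E_n^\times)$ and invoking \Cref{Morava_Module_Pic_2}, the class of $\cl{R}$ in $H_c^1(\Morex_n;\pi_0E_n^\times)$ is represented by the cocycle $\rho\circ\chi\colon\Morex_n\to\mu_2\sseq\pi_0E_n^\times$, and this class must vanish.

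Hence there exists $x\in(\pi_0E_n)^\times$ satisfying the coboundary identity $\sigma(x)/x=\rho(\chi(\sigma))$ for every $\sigma\in\Morex_n$. The key step is to restrict this identity to the kernel $N=\ker(\chi)$: for $\sigma\in N$ we have $\sigma(x)=x$, so $x\in(\pi_0E_n)^N$. By \Cref{Morex_Fixed_Points}, $(\pi_0E_n)^N=\ZZ_2$, hence $x\in\ZZ_2^\times\sseq\pi_0E_n$.

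But $\Morex_n$ acts trivially on the subring $\ZZ_2\sseq\pi_0E_n\simeq W(\cl\FF_p)[[u_1,\dots,u_{n-1}]]$ (the action on $W(\cl\FF_p)$ factors through $\pi\colon\Morex_n\onto\widehat\ZZ\simeq\Gal(\cl\FF_p/\FF_p)$ and fixes $\ZZ_p$ pointwise). Therefore $\sigma(x)/x=1$ for every $\sigma\in\Morex_n$, which forces $\rho(\chi(\sigma))=1$. Since \Cref{Morava_Abelianization} identifies $\widehat\ZZ\times\ZZ_2^\times$ as the abelianization of $\Morex_n$ via $(\pi,\chi)$, the map $\chi$ is surjective, and we conclude $\rho=1$. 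I expect the only mildly delicate point to be verifying that the subring $\ZZ_2\sseq\pi_0E_n$ is pointwise fixed by all of $\Morex_n$ (not just by $N$), but this is immediate from the explicit description of the action recalled in the proof of \Cref{Morex_Fixed_Points}.
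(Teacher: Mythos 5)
Your proof is correct and rests on the same two pillars as the paper's: the reduction to the algebraic Picard group via \Cref{Morava_Module_Pic_2}, and the fixed-point computation $(\pi_0E_n)^N = \ZZ_2$ from \Cref{Morex_Fixed_Points}. The only difference is packaging: where the paper invokes the inflation--restriction exact sequence (after first injecting $H_c^1(\ZZ_2^\times;\mu_2) \into H_c^1(\ZZ_2^\times;\ZZ_2^\times)$ using triviality of the action), you unwind the coboundary condition by hand, showing that the coboundary witness $x$ is forced into $\ZZ_2^\times$ and hence is $\Morex_n$-fixed. This is exactly the elementary content of the injectivity of inflation in this situation, so the two arguments are the same at heart; your version is perhaps a touch more self-contained, at the cost of redoing a standard cohomological fact in low degree. (Minor typo: $\Sph_{K(2)}$ should read $\Sph_{K(n)}$.)
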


\begin{proof}
    It suffices to show that composing further with $\Pic_n^0 \to \Picalg_n$ yields an injective map. By \Cref{Morava_Module_Pic_2}, this reduces to showing that the composition 
    \[
        H_c^1(\ZZ_2^\times;\mu_2) \oto{\chi^*}
        H_c^1(\Morex_n;\mu_2) \to 
        H_c^1(\Morex_n;\pi_0E_n^\times)
    \]
    is injective. Let $N\triangleleft\Morex_n$ denote the kernel of the cyclotomic character $\chi\colon \Morex_n\onto \ZZ_2^\times$. Since, by \Cref{Morex_Fixed_Points}, we have
    $\mu_2 \sseq \ZZ_2^\times = (\pi_0E_n^\times)^N$,
    so this composition fits into the following commutative diagram:

    \[\begin{tikzcd}
    	{ H_c^1(\ZZ_2^\times;\mu_2) } & {       H_c^1(\ZZ_2^\times;\ZZ_2^\times)} & {H_c^1(\ZZ_2^\times;(\pi_0E_n^\times)^N) } \\
    	{        H_c^1(\Morex_n;\mu_2)} && {H_c^1(\Morex_n;(\pi_0E_n^\times)^N) } \\
    	& {        H_c^1(\Morex_n;\pi_0E_n^\times).}
    	\arrow[from=1-1, to=1-2]
    	\arrow["\chi^*"',from=1-1, to=2-1]
    	\arrow[Rightarrow, no head, from=1-2, to=1-3]
    	\arrow["\chi^*",from=1-3, to=2-3]
    	\arrow[from=2-3, to=3-2]
    	\arrow[from=2-1, to=2-3]
    	\arrow[from=2-1, to=3-2]
    \end{tikzcd}\]
    The top left horizontal map is injective because the residual action of $\Morex_n/N = \ZZ_2^\times$ on $(\pi_0E_n^\times)^N =\ZZ_2^\times$ is trivial. The composition of the right vertical map $\chi^*$ with the right diagonal map is the inflation map 
    \[
        H_c^1(\ZZ_2^\times;(\pi_0E_n^\times)^N) \to
        H_c^1(\Morex_n;\pi_0E_n^\times).
    \]
    The injectivity of this map is part of the inflation-restriction exact sequence in (continuous) group cohomology. It follows that the composition of the left vertical map $\chi^*$ and the left diagonal map is injective as well.
\end{proof}

In concrete terms, we have 
\[
    \hom_c(\ZZ_2^\times,\mu_2) \simeq
    \hom_c((\ZZ/8)^\times,\mu_2) \simeq
    \ZZ/2 \times \ZZ/2.
\]
The three non-zero elements correspond to the $\ZZ/2$-Galois sub-extensions of the $(\ZZ/8)^\times$-Galois cyclotomic extension $\cyc[\Sph_{K(n)}]{8}{n}$, which we denote by $R_1,R_2,R_3$. The zero element corresponds of course to the split $\ZZ/2$-Galois extension $R_0 := \prod_{\mu_2}\Sph_{K(n)}$. 

\begin{defn}\label{Def_Wi}
    For $i=0,\dots,3$, we define the Picard objects $W_i := \cl{R}_i \in \Pic_n^0$.
\end{defn}

\Cref{Theta_Injectivity} implies that $W_0(=\Sph_{K(n)}),W_1,W_2$ and $W_3$ are all different. We shall now show further that all of their (de)suspensions are different as well. 

\begin{prop}\label{W_Different}
    The various (de)suspensions of $W_0(=\Sph_{K(n)}),W_1,W_2$ and $W_3$ are all different elements of $\Pic_n$.
\end{prop}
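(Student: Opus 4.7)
The plan is to split by parity using $\Pic(E_n) \simeq \ZZ/2$, pass to the algebraic Picard group $\Picalg_n \simeq H^1_c(\Morex_n; (\pi_0 E_n)^\times)$, and conclude by a central character argument showing that the class of $\Sigma^{2m}\Sph_{K(n)}$ has infinite order therein; since the $W_i$'s map to $2$-torsion classes, this will prevent any nontrivial even suspension of the unit from coinciding with a $W_j \otimes W_i^{-1}$. More concretely: as in the proof of \Cref{Detect_Pic0}, $\Pic(E_n) \simeq \ZZ/2$ is generated by $\Sigma E_n$, so applying $E_n \otimes (-)$ shows that $\Sigma^a W_i \in \Pic_n^0$ if and only if $a$ is even. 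Any equality $\Sigma^a W_i \simeq \Sigma^b W_j$ therefore forces $a \equiv b \pmod 2$, say $a = b + 2m$, and reduces to an identity
\[
    \Sigma^{2m}\Sph_{K(n)} \simeq W_j \otimes W_i^{-1} \qin \Pic_n^0.
\]
For $m=0$ this recovers $W_i \simeq W_j$, which forces $i=j$ by \Cref{Theta_Injectivity}.

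For $m\ne 0$, I apply the map $\Pic_n^0 \to \Picalg_n \simeq H^1_c(\Morex_n;(\pi_0 E_n)^\times)$. By \Cref{Morava_Module_Pic_2}, the image of $W_j\otimes W_i^{-1}$ is represented by the $\mu_2$-valued $1$-cocycle $(\rho_j\rho_i^{-1})\circ\chi$, hence is $2$-torsion. Using the cocycle recipe of \Cref{Pic_Cocycle} applied to the generator $u^{-m}\in \pi_{-2m} E_n = \pi_0(E_n \otimes \Sigma^{2m}\Sph_{K(n)})$, where $u\in \pi_2 E_n$ is the chosen $2$-periodicity generator, the image of $\Sigma^{2m}\Sph_{K(n)}$ is represented by the cocycle $\alpha_m\colon \sigma \mapsto \sigma(u^{-m})/u^{-m}$. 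Thus it suffices to show that $\alpha_{2m}$ is non-trivial in $H^1_c(\Morex_n;(\pi_0 E_n)^\times)$ for every $m\ne 0$.

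The core computation uses the central subgroup $\ZZ_p^\times \subseteq Z(\mathcal{O}_n^\times)\subseteq \Morex_n$. If $\alpha_k$ were a coboundary, say $\alpha_k(\sigma) = \sigma(y)/y$ for some $y\in (\pi_0 E_n)^\times$, then the unit $u^{-k}y^{-1} \in \pi_{-2k}E_n$ would be $\Morex_n$-fixed. But for $\lambda \in \ZZ_p^\times$, the automorphism $[\lambda]\colon \cl{\Gamma}\to \cl{\Gamma}$ lifts canonically to the universal deformation, hence acts trivially on $\pi_0 E_n$, while it scales $u$ by $\lambda$ (as $u$ corresponds to an invariant differential). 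So $[\lambda](u^{-k}y^{-1}) = \lambda^{-k}u^{-k}y^{-1}$, and invariance would force $\lambda^k = 1$ for \emph{every} $\lambda\in\ZZ_p^\times$; since $\ZZ_p^\times$ contains elements of infinite order, this forces $k=0$. Taking $k=2m$ finishes the argument. The main obstacle is precisely this last step: identifying the cocycle for $\Sigma^{2m}\Sph_{K(n)}$ via \Cref{Pic_Cocycle} and invoking the fact—well known but slightly tangential to the main line of the paper—that central elements $[\lambda] \in \mathcal{O}_n^\times$ act trivially on $\pi_0 E_n$ while multiplying $u$ by $\lambda$.
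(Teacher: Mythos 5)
Your proof is correct and follows essentially the same strategy as the paper: reduce to an even suspension via $\Pic(E_n)\simeq\ZZ/2$, pass to $\Picalg_n\simeq H^1_c(\Morex_n;(\pi_0E_n)^\times)$, and exploit that the central $\ZZ_2^\times\le\Morex_n$ acts trivially on $\pi_0E_n$ but scales $u$ by $\lambda$, so the algebraic class of $\Sigma^{2m}\Sph_{K(n)}$ cannot be a $\mu_2$-valued character unless $m=0$. The paper packages this via the explicit restriction homomorphism $\theta\colon\Picalg_n\to\hom_c(\ZZ_2^\times,(\pi_0E_n)^\times)$ and the identity $\theta_{\Sigma^{2m}\cl R}(a)=a^{-m}\theta_{\cl R}(a)$, whereas you route through the observation that $\alpha_{2m}$ is a nontrivial cocycle so $\alpha_m$ is not $2$-torsion; the underlying computation at central elements is identical.
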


\begin{proof}
    We need to show that if $\Sigma^{k_i}W_i = \Sigma^{k_j}W_j$, then $i=j$ and $k_i = k_j$. By (de)suspending, we may assume that $k_j = 0$, and by \Cref{Theta_Injectivity}, it suffices to show that we must have $k_i=0$ as well. Let $k=k_i$ and let $R = R_i$ and $R' = R_j$.
    By \Cref{Rbar_Pic0}, we have 
    \[
        E_n \simeq 
        E_n \otimes \cl{R'} \simeq
        E_n \otimes \Sigma^k \cl{R} \simeq 
        \Sigma^{k}E_n,
    \]
    as $E_n$-modules. Thus, we get that $k=2m$ for some $m\in\ZZ$.
    To show that $m$ must be zero, we shall consider the image of $\Sigma^{2m}\cl{R}$ in $\Picalg_n$. More specifically, since the center $\ZZ_2^\times \le \Morex_n$ acts trivially on $\pi_0 E_n^\times$ (see \cite[\S3.2.2]{barthel2019chromatic}), restriction along its inclusion into $\Morex_n$ is a map of the form
    \[
        \theta_{(-)} \colon
        \Picalg_n \simeq 
        H_c^1(\Morex_n;\pi_0E_n^\times) \to
        H_c^1(\ZZ_2^\times;\pi_0E_n^\times) \simeq
        \hom_c(\ZZ_2^\times,\pi_0E_n^\times).
    \]
    Every element of the center $a\in\ZZ_2^\times \triangleleft \Morex_n$ acts on the polynomial generator $u\in\pi_2(E_n)$ by multiplication $u\mapsto au$ (see \cite[\S3.2.2]{barthel2019chromatic}). Thus, the object $\pi_{2m}E_n \in \Picalg_n$ is  mapped to
    \[
        \theta_{\pi_{2m}(E_n)} =
        (-)^{-m} \colon
        \ZZ_2^\times \to 
        \ZZ_2^\times \sseq \pi_0E_n^\times. 
    \]
    Since we have
    \[
        \pi_0(E_n \otimes \Sigma^{2m}\cl{R}) \simeq
        (\pi_{2m}E_n)\otimes_{\pi_0E_n}\pi_0(E_n\otimes \cl{R}), 
    \]
    we get 
    \[
        \theta_{\Sigma^{2m}\cl{R}}(a) = a^{-m}\theta_{\cl{R}}(a)\quad,\quad 
        \forall a\in \ZZ_2^\times.
    \]
    If $\theta_{\Sigma^{2m}\cl{R}}$ were to be equal to $\theta_{\cl{R'}}$, it would in particular have to factor through the finite group $\mu_2\sseq \ZZ_2^\times$. However, this can not happen unless $m=0$.
\end{proof}

\subsection{Telescopic Lifts}
We can now combine the results of the previous subsections to deduce the main results of the paper regarding the Galois extensions and Picard groups of the telescopic categories $\Sp_{T(n)}$. Recall that by \Cref{rem:Galois_semiadd_faithful} in higher semiadditive $\infty$-categories such as $\Sp_{K(n)}$ and $\Sp_{T(n)}$ all finite Galois extensions are automatically faithful.
First, we have 

\begin{thm}\label{Tele_Gal}
    Let $G$ be a finite abelian group. For every $G$-Galois extension $R$ in $\Sp_{K(n)}$, there exists a $G$-Galois extension $R^f$ in $\Sp_{T(n)}$, such that $L_{K(n)}R^f \simeq R$.
\end{thm}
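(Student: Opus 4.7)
The plan is to construct $R^f$ by combining two building blocks whose $T(n)$-local Galois lifts are already available: the spherical Witt vectors and the higher cyclotomic extensions. These correspond, respectively, to the unramified and ramified quotients of $\Morex_n^{\ab}$.

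By the $K(n)$-local Galois correspondence (\Cref{Galois_corr_k(n)}) and the abelianness of $G$, the extension $R$ is classified by a continuous homomorphism $\rho\colon \Morex_n\to G$ that factors through the abelianization $\Morex_n^{\ab}\simeq\widehat{\ZZ}\times\ZZ_p^\times$ (\Cref{Morava_Abelianization}) and, by continuity, through a finite quotient $H := \ZZ/m\times (\ZZ/p^r)^\times$ for suitable $m,r\ge 1$. Let $\overline{\rho}\colon H\to G$ denote the induced map. By replacing $G$ with $\mathrm{im}(\overline{\rho})$ and inducing up along the resulting subgroup inclusion at the end of the construction, we may assume $\overline{\rho}$ is surjective with kernel $K\triangleleft H$.

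Next I would build an $H$-Galois lift of the extension of $\Sph_{K(n)}$ classified by the projection $\Morex_n\onto H$. Set $S_1 := L_{T(n)}\Sph W(\FF_{p^m})$, which is $\ZZ/m$-Galois because $\Sph W(\FF_{p^m})$ is {\'e}tale $\ZZ/m$-Galois over $\Sph_p$ and $L_{T(n)}$ is a symmetric monoidal left adjoint; by \Cref{K(n)_Local_Witt}, $L_{K(n)}S_1$ realizes the extension classified by $\Morex_n\onto\widehat{\ZZ}\onto\ZZ/m$. Set $S_2 := \cyc[\Sph_{T(n)}]{p^r}{n}$, which is $(\ZZ/p^r)^\times$-Galois by \Cref{Cyclo_Galois}, and by \Cref{Cyc_Char_K(n)}, $L_{K(n)}S_2$ realizes the extension classified by $\Morex_n\onto\ZZ_p^\times\onto(\ZZ/p^r)^\times$. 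The tensor product $S := S_1\otimes S_2$ is then $H$-Galois: the two conditions of \Cref{Galois} are verified factor by factor, using the ambidexterity of $B\ZZ/m$ and $B(\ZZ/p^r)^\times$ in $\Sp_{T(n)}$ to commute the relevant homotopy fixed points with tensoring over $\Sph_{T(n)}$ (\cite[Proposition 3.3.1]{TeleAmbi}).

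Finally, define $R^f := S^{hK}\in\calg(\Sp_{T(n)}^{BG})$. Since $BK$ is $\Sp_{T(n)}$-ambidextrous, quotienting the $H$-Galois extension $S$ by the normal subgroup $K$ yields a $G$-Galois extension of $\Sph_{T(n)}$. For the compatibility with localization, $L_{K(n)}$ is a symmetric monoidal left adjoint that preserves $BK$-shaped fixed points (by \cite[Proposition 2.1.8]{AmbiHeight}), so $L_{K(n)}R^f\simeq (L_{K(n)}S_1\otimes L_{K(n)}S_2)^{hK}$, which is the $G$-Galois extension of $\Sph_{K(n)}$ classified by $\rho$, hence isomorphic to $R$ by \Cref{Galois_corr_k(n)}. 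The main obstacle I expect is verifying that the tensor product of Galois extensions remains Galois and that quotienting by a normal subgroup preserves the Galois property; both rely critically on the higher semiadditivity of $\Sp_{T(n)}$ (\cite[Theorem A]{TeleAmbi}) to commute homotopy fixed points with colimits and tensor products.
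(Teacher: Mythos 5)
Your proof is essentially the same as the paper's, with the virtue of making explicit the assembly steps that the paper compresses into ``it suffices.'' You reduce to the same two building blocks, $S_1 = L_{T(n)}\Sph W(\FF_{p^m})$ and $S_2 = \cyc[\Sph_{T(n)}]{p^r}{n}$, and then explicitly combine them via tensor product, passage to $K$-fixed points, and induction up the subgroup inclusion. The only substantive divergence is how you show $S_1$ is Galois: you invoke that symmetric monoidal left adjoints preserve (faithful, hence in particular \'etale) Galois extensions, while the paper instead applies nil-conservativity (\Cref{Galois_Nil}) together with dualizability, which lets both building blocks be handled by the same lemma; either route is valid. One small caution worth stating: the ``quotient by a normal subgroup'' and ``tensor of Galois is Galois'' steps are indeed standard (Rognes' Theorems 7.2.3 and 9.2), but they carry hypotheses (dualizability/faithfulness of $S$) that you should note are automatic here --- in the ambidextrous setting, finite Galois extensions are dualizable by \Cref{Dualizable_Sad} and faithful since the norm map is an isomorphism; alternatively one can avoid both standard facts entirely by checking the Galois conditions for $R^f$ after applying the nil-conservative $L_{K(n)}$, exactly as the paper does.
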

\begin{proof}
By \Cref{Morava_Abelianization}, the abelian Galois extensions of $\Sp_{K(n)}$ are classified by the group 
$\Morex_n^\ab \simeq \widehat{\ZZ} \times \ZZ_p^\times,$
through the homomorphism
\[
    \chi_\tot \colon \Morex_n \onto 
    \widehat{\ZZ} \times \ZZ_p^\times.
\]
Thus, it suffices to show that the Galois extensions corresponding to the finite quotients 
\[
        \Morex_n \onto  \widehat{\ZZ} \onto \ZZ/m
\]
and 
\[
        \Morex_n \onto  \ZZ_p^\times \onto  (\ZZ/p^r)^\times 
\]
can be lifted to $\Sp_{T(n)}$. For the first kind, we can take $L_{T(n)}\Sph W(\FF_{p^m})$,
which is Galois by the nil-conservativity 
of $L_{K(n)}\colon\Sp_{T(n)}\to \Sp_{K(n)}$ (see \cite[Proposition 5.1.15]{TeleAmbi}) and Propositions \ref{K(n)_Local_Witt} and \ref{Galois_Nil}. For the second kind, it follows from \Cref{Cyclo_Galois}, that we can take $\cyc[\Sph_{T(n)}]{p^r}{n}$.
\end{proof}

The proof of \Cref{Tele_Gal} shows in fact a bit more. Namely, that the telescopic lifts of the abelian Galois extensions in $\Sp_{K(n)}$ can be chosen in a ``compatible way''. In the language of \cite{AkhilGalois}, the situation can be described as follows. The functor 
$L_{K(n)}\colon \Sp_{T(n)}\to \Sp_{K(n)}$
induces a continuous homomorphism on weak 
Galois groups (\cite[Definition 6.8]{AkhilGalois})
\[
    \pi_1^{\rm{weak}}(\Sp_{K(n)}) \to 
    \pi_1^{\rm{weak}}(\Sp_{T(n)})
\]
and after passing to abelianizations, this homomorphism admits a left-inverse. Hence, 
$ \pi_1^{\rm{weak}}(\Sp_{T(n)})^\ab$ contains
\[
    \pi_1^{\rm{weak}}(\Sp_{K(n)})^\ab \simeq 
    \widehat{\ZZ}\times \ZZ_p^\times
\]
as a direct summand. 


Consider now the telescopic Picard group 
$\Pic_n^{f}:=\Pic(\Sp_{T(n)})$ and its subgroup $\Pic_n^{f,0} \le \Pic_n^{f}$ of objects that map to $\Pic_n^0$ under $K(n)$-localization.
When $p$ is odd, the cyclotomic extension $\cyc[\Sph_{T(n)}]{p}{n}$ provides us with the following:
\begin{thm}
    \label{Tele_Pic_Odd}
    For every $n\ge1$ and an odd prime $p$,
    there exists $Z_n^f \in \Pic_n^{f,0}[p-1]$, such that $L_{K(n)}Z^f_n \simeq Z_n$ (see \Cref{Def_Z_n}). In particular, $\Pic_n^{f,0}[p-1]$
    contains $\Pic_n^0[p-1] \simeq \ZZ/(p-1)$ as a direct summand. 
\end{thm}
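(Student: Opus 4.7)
The plan is to construct $Z_n^f$ by applying the Kummer-theoretic short exact sequence of \Cref{Kummer_Cyclic} to the $T(n)$-local higher cyclotomic extension, and then verify naturality with respect to $L_{K(n)}$. Since $n \ge 1$, the $\infty$-category $\Sp_{T(n)}$ is $p$-complete, so by \Cref{p_Complete_Roots} the unit $\Sph_{T(n)}$ admits a primitive $(p-1)$-st root of unity. Consequently, \Cref{Kummer_Cyclic} applied to $\cC = \Sp_{T(n)}$ and $m = p-1$ yields the split short exact sequence
\[
    0 \to (\pi_0 \Sph_{T(n)}^\times)/(\pi_0 \Sph_{T(n)}^\times)^{p-1} \to \pi_0 \GalExt{\Sp_{T(n)}}{\ZZ/(p-1)} \to \Pic^\re(\Sp_{T(n)})[p-1] \to 0.
\]

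By \Cref{Cyclo_Galois}, the higher cyclotomic extension $\cyc[\Sph_{T(n)}]{p}{n}$ represents a class in $\pi_0 \GalExt{\Sp_{T(n)}}{\ZZ/(p-1)}$, and I take $Z_n^f \in \Pic^\re(\Sp_{T(n)})[p-1] \subseteq \Pic_n^f$ to be its image under the right-hand map. To see that $L_{K(n)} Z_n^f \simeq Z_n$, I would observe first that the construction of the higher cyclotomic extension via the idempotent of \Cref{Transfer_Idempotent} is preserved by any symmetric monoidal colimit-preserving functor between $\infty$-categories in $\calg(\Prl_{\tsadi_n})$; in particular, $L_{K(n)}(\cyc[\Sph_{T(n)}]{p}{n}) \simeq \cyc[\Sph_{K(n)}]{p}{n}$ as $\ZZ/(p-1)$-Galois extensions. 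Then, the naturality of the Kummer map (which is induced by the symmetric monoidal Fourier transform of \Cref{DFT_Equiv}) in the ambient $\infty$-category yields $L_{K(n)} Z_n^f \simeq Z_n$, where $Z_n$ is as in \Cref{Def_Z_n}.

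For the direct summand statement, the relation $L_{K(n)} Z_n^f \simeq Z_n$ together with $Z_n \in \Pic_n^0$ imply $Z_n^f \in \Pic_n^{f,0}$. Hence the homomorphism $\ZZ/(p-1) \to \Pic_n^{f,0}[p-1]$ sending $1 \mapsto Z_n^f$ is split injective: its post-composition with the restriction of $L_{K(n)}$ to $\Pic_n^{f,0}[p-1] \to \Pic_n^0[p-1]$ is the isomorphism $\ZZ/(p-1) \iso \Pic_n^0[p-1]$ of \Cref{Pic_Kn}, providing a retraction. This exhibits $\Pic_n^0[p-1] \simeq \ZZ/(p-1)$ as a direct summand of $\Pic_n^{f,0}[p-1]$. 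I do not anticipate a genuine obstacle; the result is essentially a combination of Kummer theory, the $T(n)$-local Galois property of \Cref{Cyclo_Galois}, and the naturality of both constructions under $L_{K(n)}$.
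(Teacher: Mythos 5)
Your proof is correct and follows essentially the same route as the paper's: take the class of $\cyc[\Sph_{T(n)}]{p}{n}$ in $\pi_0 \GalExt{\Sp_{T(n)}}{\ZZ/(p-1)}$ and push it to the Picard group via the Kummer sequence of \Cref{Kummer_Cyclic}, then use naturality of that sequence under $L_{K(n)}$ together with \Cref{Pic_Kn} to split off the $\ZZ/(p-1)$ summand. You spell out a couple of steps the paper leaves implicit (the primitive $(p-1)$-st root of unity in $\Sp_{T(n)}$ via \Cref{p_Complete_Roots}, and the compatibility $L_{K(n)}\cyc[\Sph_{T(n)}]{p}{n}\simeq\cyc[\Sph_{K(n)}]{p}{n}$ from preservation of the idempotent of \Cref{Transfer_Idempotent}), which is a reasonable amount of elaboration and does not change the argument.
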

\begin{proof}
    We define $Z_n^f$ to be the image of the $\ZZ/(p-1)$-Galois extension $\cyc[\Sph_{T(n)}]{p}{n}$ under the map of \Cref{Kummer_Cyclic}. By the naturality with respect to the functor $L_{K(n)}\colon \Sp_{T(n)} \to \Sp_{K(n)}$, we have $L_{K(n)}Z_n^f \simeq Z_n$. In view of \Cref{Pic_Kn}, this provides a section to the map
    \[
        \Pic_n^{f,0}[p-1] \to 
        \Pic^0_n[p-1] \simeq
        \ZZ/(p-1),
    \]
which proves the last claim.
\end{proof}

In the case $p=2$, the cyclotomic extension $\cyc[\Sph_{T(n)}]{8}{n}$
provides the following:
\begin{thm}
    \label{Tele_Pic_Even}
    For every $n\ge1$ and $p=2$,
    there exist objects
    $W_1^f,W_2^f,W_3^f \in \Pic_n^{f,0},$ such that $L_{K(n)}W_i^f = W_i$ (see \Cref{Def_Wi}). In particular,
    all the (de)suspensions of the $W_i^f$-s are different and non-trivial. 
\end{thm}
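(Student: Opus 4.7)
The plan is to mirror \Cref{Tele_Pic_Odd}, but with the $\mu_2$-variant of \Cref{R_minus_one_Pic} replacing Kummer theory. By \Cref{Cyclo_Galois}, the telescopic cyclotomic extension $\cyc[\Sph_{T(n)}]{8}{n}$ is $(\ZZ/8)^\times$-Galois. Since $(\ZZ/8)^\times \simeq \ZZ/2 \times \ZZ/2$ has three index-$2$ subgroups $H_1, H_2, H_3$, taking homotopy fixed points
\[
    R_i^f := \cyc[\Sph_{T(n)}]{8}{n}^{hH_i}
    \qin \GalExt{\Sp_{T(n)}}{\mu_2}
\]
yields three $\ZZ/2$-Galois extensions of $\Sph_{T(n)}$. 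I would then define $W_i^f := \overline{R_i^f}$, which by \Cref{R_minus_one_Pic} belongs to $\Pic_n^f$.

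To verify $L_{K(n)} R_i^f \simeq R_i$, I would use that $L_{K(n)}$ is a symmetric monoidal colimit-preserving functor between stable $\infty$-categories, hence exact. It therefore preserves the idempotent-splitting construction of \Cref{Def_Higher_Cyclo}, giving $L_{K(n)}\cyc[\Sph_{T(n)}]{8}{n} \simeq \cyc[\Sph_{K(n)}]{8}{n}$, and by ambidexterity for the $\infty$-semiadditive categories $\Sp_{T(n)}$ and $\Sp_{K(n)}$, it commutes with $H_i$-homotopy fixed points. Applying the exact functor $L_{K(n)}$ to the cofiber sequence $\one \to R_i^f \to \overline{R_i^f}$ then gives $L_{K(n)} W_i^f \simeq W_i$, and since $W_i \in \Pic_n^0$ by \Cref{Rbar_Pic0}, we conclude $W_i^f \in \Pic_n^{f,0}$.

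For the final claim, setting $W_0^f := \Sph_{T(n)}$ for convenience, if $\Sigma^k W_i^f \simeq \Sigma^\ell W_j^f$ in $\Pic_n^f$ for some $i,j \in \{0,1,2,3\}$ and $k,\ell \in \ZZ$, then applying $L_{K(n)}$ yields $\Sigma^k W_i \simeq \Sigma^\ell W_j$ in $\Pic_n$, which by \Cref{W_Different} forces $(i,k) = (j,\ell)$. In particular the $\Sigma^k W_i^f$ with $i \in \{1,2,3\}$ are pairwise distinct and all different from $\Sph_{T(n)}$. No significant obstacle is anticipated: the substantive Picard-theoretic work has been carried out in the $K(n)$-local setting, and the $T(n)$-local lift is a formal consequence of the compatibility of $L_{K(n)}$ with the cyclotomic, homotopy-fixed-point, and cofiber constructions used above.
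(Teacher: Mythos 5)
Your proposal is correct and takes essentially the same route as the paper: the three $\ZZ/2$-sub-extensions of $\cyc[\Sph_{T(n)}]{8}{n}$, the $\cl{(-)}$ construction of \Cref{R_minus_one_Pic}, compatibility of $L_{K(n)}$ with the cyclotomic, fixed-point, and cofiber constructions, and then \Cref{W_Different} for the distinctness. You simply spell out the $L_{K(n)}$-compatibility steps that the paper leaves implicit.
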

\begin{proof}
    Let $R_1^f,R_2^f,R_3^f \in \Pic_n^f$ be the non-trivial $\ZZ/2$-Galois sub-extensions of the $(\ZZ/8)^\times$-Galois cyclotomic extension $\cyc[\Sph_{T(n)}]{8}{n}$, corresponding to the three order $2$ subgroups of
    \[
        (\ZZ/8)^\times \simeq
        \ZZ/2 \times \ZZ/2.
    \]
    We define $W_i^f = \cl{R^f_i} \in \Pic_n^f$ for $i=1,2,3.$ 
    Since $L_{K(n)}W_i^f \simeq W_i$, the last claim follows from \Cref{W_Different}.
\end{proof}

\bibliographystyle{alpha}
\phantomsection\addcontentsline{toc}{section}{\refname}
\bibliography{cycloref}

\end{document}